\definecolor{darkgreen}{rgb}{0.0, 0.6, 0.0}
\def\I{\mathcal{I}}
\def\J{\mathcal{J}}
\DeclareMathOperator{\md}{\mathsf{mod}}
\renewcommand{\mod}{\md}
\DeclareMathOperator{\proj}{\mathsf{proj}}
\DeclareMathOperator{\refl}{\mathsf{ref}}
\DeclareMathOperator{\CM}{\mathsf{CM}}
\DeclareMathOperator{\add}{\mathsf{add}}
\DeclareMathOperator{\Hom}{Hom}
\DeclareMathOperator{\End}{End}
\DeclareMathOperator{\Frac}{Frac}
\DeclareMathOperator{\Ker}{Ker}
\DeclareMathOperator{\Cok}{Cok}
\DeclareMathOperator{\im}{Im}
\renewcommand{\Im}{\im}
\DeclareMathOperator{\Cl}{Cl}
\DeclareMathOperator{\Spec}{Spec}
\DeclareMathOperator{\ch}{char}
\DeclareMathOperator{\rk}{rk}
\DeclareMathOperator{\Conv}{Conv}
\DeclareMathOperator{\vol}{vol}
\DeclareMathOperator{\ord}{ord}
\def\gl{\mathop{\rm gl.dim}\nolimits}
\def\pd{\mathop{\rm proj.dim}\nolimits}
\theoremstyle{definition}
\newtheorem{Thm}{Theorem}[section]
\newtheorem{Lem}[Thm]{Lemma}
\newtheorem{Prop}[Thm]{Proposition}
\newtheorem{Cor}[Thm]{Corollary}
\newtheorem{Def}[Thm]{Definition}
\newtheorem{Ex}[Thm]{Example}
\newtheorem{Rem}[Thm]{Remark}
\newcommand{\FRAC}[2]{\leavevmode\kern.1em\raise.5ex\hbox{\the\scriptfont0 #1}\kern-.1em/\kern-.15em\lower.25ex\hbox{\the\scriptfont0 #2}}
\title[NCCRs of toric singularities with divisor class group of rank one]{Non-commutative crepant resolutions of toric singularities with divisor class group of rank one}
\author{Ryu Tomonaga}
\address{Graduate School of Mathematical Sciences, The University of Tokyo, 3-8-1 Komaba, Meguro-ku, Tokyo, 153-8914, Japan}
\email{ryu-tomonaga@g.ecc.u-tokyo.ac.jp}
\begin{document}
\begin{abstract}
We prove the existence and give a classification of toric non-commutative crepant resolutions (NCCRs) of Gorenstein toric singularities whose divisor class group has rank one. More precisely, such toric NCCRs are in bijection with non-trivial upper sets in a certain quotient of the divisor class group equipped with a natural partial order. This classification allows us to prove that all toric NCCRs of such toric singularities are connected by iterated Iyama--Wemyss mutations, and hence are derived equivalent to one another.

We further give a dimer-model realization of this classification in the non-pyramidal case. More precisely, we construct periodic quivers with cuts on a $d$-dimensional torus, establish a cut-upper set correspondence, and prove that the resulting cut quiver with relations presents the corresponding toric NCCR. For $d=2$, this recovers the quiver-theoretic part of the usual dimer-model construction.

In the appendix, we give an explicit formula for the volume of $d$-dimensional lattice polytopes with $d+2$ vertices. As an application, we verify Van den Bergh's conjectural equality, for Gorenstein toric singularities with divisor class group of rank one, between the number of indecomposable direct summands of a toric NCCR and the normalized volume of the corresponding lattice polytope.
\end{abstract}

\maketitle
\tableofcontents

\section*{Introduction}

The notion of non-commutative crepant resolutions (NCCRs) was introduced by \cite{VdB04a} as a virtual space of crepant resolutions of a given Gorenstein normal singularity, motivated by the derived McKay correspondence \cite{BKR}. Since then, NCCRs have turned out to have deep connections with Cohen--Macaulay representation theory \cite{Iya07b,Yos90}, higher Auslander--Reiten theory \cite{Han24a,Iya07a,Tom25b}, Calabi--Yau algebras \cite{IR}, additive categorification of cluster algebras \cite{AIR15,BMRRT,HI22} and related topics. The existence of NCCRs has been established for several important classes of singularities, including quotient singularities \cite{Iya07a,Tom24}, compound Du Val singularities having crepant resolutions \cite{Vdb04b}, Du Val del Pezzo cones \cite{Tom25a,VdB04a}, some toric singularities, and so on \cite{Han25,Har,SVdB17}. In these cases, NCCRs provide rich information about their geometry, derived categories and Cohen--Macaulay representations.

For toric singularities, it is natural to focus on toric NCCRs, namely NCCRs given by a direct sum of divisorial modules. For a Gorenstein toric singularity $R$, the existence of toric NCCRs is known in several cases: for instance, when $\dim R\leq3$ \cite{Bro}, when $\Cl(R)$ is torsion, when $\Cl(R)\cong\mathbb{Z}$ \cite{VdB04a} and in some further cases \cite{HN,Mat22,SVdB20a,SVdB20b}. On the other hand, there exists a Gorenstein toric singularity $R$ with $\dim R=4$ and $\Cl(R)\cong\mathbb{Z}^2$ such that $R$ has an NCCR, but has {\it no} toric NCCRs \cite{SVdB17,SVdB20a}. These results form part of the broader expectation that all Gorenstein toric singularities admit NCCRs.

In the toric setting, the class group is not merely an auxiliary invariant: it parametrizes divisorial modules, and hence provides the natural combinatorial arena in which toric NCCRs should be studied. The case $\rk\Cl(R)=1$ is the first genuinely non-simplicial case. Indeed, for a $(d+1)$-dimensional Gorenstein toric singularity, this condition is equivalent to saying that the corresponding $d$-dimensional lattice polytope has $d+2$ vertices. Thus it lies immediately beyond the simplicial situation, while still retaining enough rigidity to make a complete classification possible. This is also a natural boundary for the theory of toric NCCRs: in higher class-group rank, there are Gorenstein toric singularities which admit NCCRs but no toric NCCRs.

The main purpose of this paper is to give a complete classification of toric NCCRs of Gorenstein toric singularities $R$ with $\rk\Cl R=1$ in terms of upper sets in a naturally defined partially ordered quotient of $\Cl(R)$. In particular, this classification gives the first general existence result for toric NCCRs in this rank-one setting. Informally, our main theorem says that the toric NCCRs of $R$ are parametrized by such upper sets: from each upper set $I$, one extracts its boundary $I\cap(I^c+s)$, and the direct sum of the corresponding divisorial modules gives a toric NCCR; conversely, every toric NCCR arises in this way. Thus the classification problem for toric NCCRs is reduced to an explicit order-theoretic problem.

To state the result precisely, we prepare some notations. Let $\vec{x}_0,\cdots,\vec{x}_l,\vec{x}'_0,\cdots,\vec{x}'_{l'},\vec{y}_1,\cdots,\vec{y}_{d-l-l'}\in\Cl(R)$ be the weights associated with $R$, where for the natural surjection $\pi\colon\Cl(R)\to\Cl(R)/\Cl(R)_{\rm tors}\cong\mathbb{Z}$, we have
\[\pi(\vec{x}_i)>0,\pi(\vec{x}'_{i'})<0,\pi(\vec{y}_{i''})=0\ (0\le i\le l,0\le i'\le l',1\le i''\le d-l-l').\]
Here one can verify $l,l'\geq1$. We put $H:=\Cl(R)/(\sum_{i''=1}^{d-l-l'}\mathbb{Z}\vec{y}_{i''})$ and let $q\colon\Cl(R)\to H$ denote the natural surjection. Then we can define a partial order on $H$ as
\[h_1\geq h_2:\Leftrightarrow h_1-h_2\in\sum_{i=0}^l\mathbb{Z}_{\geq0}q(\vec{x}_i)+\sum_{i'=0}^{l'}\mathbb{Z}_{\geq0}q(-\vec{x}'_{i'})\subseteq H\]
for $h_1,h_2\in H$. Put $s:=\sum_{i=0}^lq(\vec{x}_i)=\sum_{i'=0}^{l'}q(-\vec{x}'_{i'})\in H$. For a finite subset $J\subseteq\Cl(R)$, let $M_J\in\refl R$ denote the direct sum of divisorial modules corresponding to elements in $J$.

\begin{Thm}[Theorem \ref{upNCCRcorr}]\label{upNCCRcorrintro}
Let $R$ be a Gorenstein toric singularity with $\rk\Cl(R)=1$. In the above notations, we have a bijection between the following sets.
\begin{enumerate}
\item The set of non-trivial upper sets in $H$.
\item $\{J\subseteq\Cl(R)\mid M_J\text{ gives an NCCR.}\}$
\end{enumerate}
The bijection from (1) to (2) is given by $I\mapsto q^{-1}(I\cap(I^c+s))$.
\end{Thm}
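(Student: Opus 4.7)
The plan is to reduce the NCCR property of $M_J$ to a combinatorial condition on $J\subseteq\Cl(R)$ using the characterization of toric NCCRs developed in earlier sections, which typically reformulates finite global dimension of $\End_R(M_J)$ in terms of the exactness of certain Koszul-type complexes indexed by the weights $\vec{x_1},\ldots,\vec{x_{d+2}}$. A preliminary step is to show that any NCCR-giving $J$ must be $q$-saturated, i.e.\ $J=q^{-1}(q(J))$: the torsion weights $\vec{x_{l+l'+1}},\ldots,\vec{x_{d+2}}$ are exactly the directions that $q$ collapses, and adding or removing the corresponding torsion twists is forced by the combinatorial NCCR criterion. This lets us work with finite subsets $\bar{J}\subseteq H$, and the Gorenstein relation $\sum_i\vec{x_i}=0$ is precisely what makes $p\in H$ well-defined.

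For the direction $(1)\Rightarrow(2)$, given a non-trivial upper set $I\subseteq H$, I would set $\bar{J}:=I\cap(I^c+p)$ and $J=q^{-1}(\bar{J})$ and verify the NCCR axioms for $M_J$. Cohen--Macaulayness is automatic since the summands are divisorial modules over a Gorenstein toric ring. For finite global dimension of $\End_R(M_J)$, I would construct explicit projective resolutions of the simples, using Koszul-type complexes on the $\vec{x_i}$'s twisted appropriately; the upper-set hypothesis on $I$ guarantees that at each stage the relevant twists land inside $J$, and the Gorenstein condition $\sum_i\vec{x_i}=0$ forces termination at length $d$.

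For the direction $(2)\Rightarrow(1)$, starting from an NCCR $M_J$, the natural candidate is the upper set generated by $\bar{J}:=q(J)$, namely
\[I:=\bar{J}+\sum_{i=1}^l\mathbb{Z}_{\geq0}q(\vec{x_i})+\sum_{j=1}^{l'}\mathbb{Z}_{\geq0}q(-\vec{x_{l+j}}).\]
The key equality to establish is $\bar{J}=I\cap(I^c+p)$. One inclusion is a rigidity statement — no two distinct elements of $\bar{J}$ differ by $p$, because otherwise $M_J$ would have a repeated summand or fail the combinatorial NCCR criterion. The reverse inclusion says that any $h\in I$ not in $\bar{J}$ must admit $h-p\in I$, which again follows from the combinatorial criterion: being in $I\setminus\bar{J}$ means $h$ is reachable from some $j\in\bar{J}$ by a nonzero element of the defining semigroup, and the NCCR structure then propagates back down by $p$.

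The main obstacle is the rigidity needed in the backward direction, i.e.\ deducing from $\gl\End_R(M_J)<\infty$ the purely combinatorial statement that $\bar{J}$ is precisely the $p$-bottom layer of some upper set. This is where the hypothesis $\rk\Cl(R)=1$ is essential: the torsion-free part of $H$ is one-dimensional, so the partial order is ``almost linear'' along the distinguished direction, and the shape of $\bar{J}$ gets pinned down correspondingly tightly — one does not expect the analogous statement in higher rank, which matches the non-existence examples of \cite{SVdB17,SVdB20a} mentioned in the introduction. Verifying that all of the combinatorics remains compatible with the torsion part of $\Cl(R)$ via $q$ is a secondary but routine obstacle that should parallel the torsion-free rank-one case.
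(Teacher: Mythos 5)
Your outline has the right overall shape, but two of its load-bearing steps are asserted rather than proved, and one of them is false as stated. First, the claim that ``Cohen--Macaulayness is automatic since the summands are divisorial modules over a Gorenstein toric ring'' is wrong: divisorial modules over a Gorenstein toric ring are \emph{not} automatically maximal Cohen--Macaulay (the paper proves in Theorem \ref{divCM}, via the toric \v{C}ech complex of Proposition \ref{lcohGortoric} and the homology computation of the spaces $X_a$, that $S_{\vec{g}}$ is MCM iff $q(\vec{g})\ngeq p$ and $q(\vec{g})\nleq -p$), and in any case the modifying condition concerns $\End_R(M_J)\cong\bigoplus_{\vec{g},\vec{h}\in J}S_{\vec{h}-\vec{g}}$, whose MCM-ness is exactly the combinatorial condition $q(J)\in\widetilde{\J}_H$ (no two elements $x,y$ with $x\geq y+p$); this is precisely where the set $J(I)=I\cap(I^c+p)$ earns its role, and it is also the ``criterion'' your backward-direction rigidity appeals to, so declaring it automatic both removes the mechanism of the bijection and leaves your later appeal unsupported. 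Second, the finite global dimension step is not substantiated: the twists appearing in a Koszul resolution do \emph{not} all stay inside $q^{-1}(J(I))$, so ``the relevant twists land inside $J$'' fails in the naive sense. The paper first reduces, using exactness of $F=\Hom^G_S(P,-)$ and $\gl S<\infty$, to showing $\pd_\Gamma F(S(\vec{g}))<\infty$ for \emph{every} $\vec{g}\in G$, and then runs an induction over mutations of upper sets (Corollary \ref{mutgen}), applying the Koszul complex of the positive-weight variables $x_1,\dots,x_l$ only, at a minimal element $m\in I$, together with the vanishing $F((S/(x_1,\dots,x_l))(\vec{m}+\vec{x_1}+\cdots+\vec{x_l}))=0$; resolving ``the simples'' directly would require knowledge of the quiver of $\Gamma$ that you have not produced.

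The backward direction is also missing its key ingredient. From finiteness of $\gl\End_R(M_J)$ alone, what you get is only that $M_J$ is modifying, hence $q(J)\in\widetilde{\J}_H$ and $q(J)\subseteq J(I)$ for some non-trivial upper set $I$ (Propositions \ref{charJX} and \ref{charmod}); to conclude the \emph{equality} $J=q^{-1}(J(I))$ (including $q$-saturation, i.e. that no torsion twist within a fibre of $q$ and no class of the bottom layer is missing) the paper invokes \cite[4.5]{IW}, that an NCCR is a maximal modifying module, applied to the modifying module $M_J\oplus M_{q^{-1}(J(I))}$. Your phrase ``the NCCR structure then propagates back down by $p$'' is not an argument and does not exclude a proper subset $J\subsetneq q^{-1}(J(I))$ whose endomorphism ring might a priori still have finite global dimension; you need either this maximality result or a direct computation ruling such $J$ out. (Finally, injectivity of $I\mapsto q^{-1}(J(I))$ rests on the combinatorial bijection $\I_H\rightleftarrows\J_H$ of Theorem \ref{upJX}, which your sketch uses implicitly but never establishes.)
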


This theorem should be viewed as a classification theorem rather than merely as an existence theorem. It shows that every toric NCCR arises from the same elementary order-theoretic construction, and conversely every non-trivial upper set produces one. As a consequence of the classification, there are examples with $\Cl(R)\cong\mathbb Z$ which admit toric NCCRs other than Van den Bergh's construction in \cite{VdB04a}. We note that, after the first version of this paper appeared, Malter--Sheshmani gave an independent proof of the existence part by a different method \cite{MS26}.

Let us indicate the main idea of the proof. We first characterize when a divisorial module $S_{\vec g}$ is maximal Cohen--Macaulay in terms of the above partial order on $H$. This turns the modifying condition for a direct sum of divisorial modules into a purely order-theoretic condition. The maximal subsets satisfying this condition are precisely the boundaries $I\cap(I^c+s)$ of upper sets. It remains to prove the finiteness of global dimension. This is achieved by applying graded Koszul complexes associated with the positive and negative weights and by using the elementary mutation combinatorics of upper sets developed in Section \ref{combup}.

The classification also makes it possible to describe Iyama--Wemyss mutations of our toric NCCRs and show that all of them are connected by iterated mutations. Iyama--Wemyss introduced mutations of NCCRs in \cite{IW} and proved that two NCCRs connected by mutations are derived equivalent. Moreover, mutations of NCCRs appear in geometry of compound Du Val singularities \cite{Wem18} and quotient singularities by reductive groups \cite{HH24}. For these reasons, it is a fundamental task to compute mutations of a given NCCR. We show that in our setting, Iyama--Wemyss mutations are compatible with mutations of upper sets.

\begin{Thm}[Theorem \ref{toricmut}]
In the notation of Theorem \ref{upNCCRcorrintro}, let $I\subseteq H$ be a non-trivial upper set. Take a minimal element $m\in I$ and write $\mu_m^-(I):=I\setminus\{m\}$. Put $M:=M_{q^{-1}(J(I)\setminus\{m\})}$. Then we have
\[(\mu_M^+)^{l'}(M_{q^{-1}(J(I))})=M_{q^{-1}(J(\mu_m^-(I)))}=(\mu_M^-)^l(M_{q^{-1}(J(I))}).\]
\end{Thm}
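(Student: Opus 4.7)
The plan is to reduce the claim to tracking a single non-common summand and then to compute the iterated Iyama-Wemyss mutations combinatorially, using the toric description of $\Hom$-spaces and exchange sequences between divisorial modules.

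First, I would carry out the combinatorial comparison between $J(I)$ and $J(\mu_m^-(I))$. Since $m$ is minimal in the upper set $I$ and $p$ is strictly positive in $H$ (its image in $\Cl(R)/\Cl(R)_{\rm tors}\cong\mathbb{Z}$ equals the positive sum $\sum_i\pi(\vec{x_i})$, so in particular $p\neq 0$), we obtain $m+p\in I\setminus\{m\}$; moreover, $m-p<m$ in the partial order, so minimality forces $m-p\notin I$, whence $m\in I^c+p$ and $m\in J(I)$. Expanding $J(I)=I\cap(I^c+p)$ against $J(\mu_m^-(I))=(I\setminus\{m\})\cap((I^c\cup\{m\})+p)$ then yields
\[J(\mu_m^-(I))=(J(I)\setminus\{m\})\cup\{m+p\}.\]
Writing $M=M_{q^{-1}(J(I)\setminus\{m\})}$, the theorem reduces to the identity
\[(\mu_M^+)^{l'-1}(M\oplus M_{q^{-1}(m)})=M\oplus M_{q^{-1}(m+p)}=(\mu_M^-)^{l-1}(M\oplus M_{q^{-1}(m)}),\]
so one only needs to track what happens to the complementary summand.

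Next, I would analyse the first exchange triangle for $\mu_M^+$. Using the standard toric identification of $\Hom$ between divisorial modules with graded pieces of $R$, the minimal right $\add M$-approximation of $M_{q^{-1}(m)}$ is built from those $M_{q^{-1}(m+w)}\in\add M$ with $w=q(-\vec{x_{l+j}})$ for some $j\in\{1,\ldots,l'\}$; the Gorenstein relation $\sum_{j=1}^{l'}q(-\vec{x_{l+j}})=p=\sum_{i=1}^lq(\vec{x_i})$ in $H$, together with the NCCR property, then forces the exchange kernel to be a single indecomposable divisorial module whose weight is $m$ shifted by one of these generators. An induction on the number of mutations shows that the $k$-th iterate of $\mu_M^+$ produces the non-$M$ summand $M_{q^{-1}(m+s_k)}$, where $s_k$ is a partial sum of $k$ distinct elements of $\{q(-\vec{x_{l+1}}),\ldots,q(-\vec{x_{l+l'}})\}$, and at the $(l'-1)$-st step the Gorenstein relation identifies the resulting weight with $m+p$. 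The dual argument, using the $l$ generators $q(\vec{x_1}),\ldots,q(\vec{x_l})$, delivers $m+p$ after $l-1$ applications of $\mu_M^-$.

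The principal obstacle is verifying that at every intermediate stage the non-$M$ summand remains a \emph{single} indecomposable divisorial module, so that the next application of $\mu_M^\pm$ is again an Iyama-Wemyss mutation at the same $M$. This depends on the combinatorial control of minimal $\add M$-approximations between divisorial modules developed earlier in the paper --- the same input that lets Theorem~\ref{upNCCRcorrintro} produce NCCRs --- together with a careful check that no extraneous indecomposable summands appear in the exchange kernels or cokernels. A smaller technical point is the terminal count of $l'-1$ rather than $l'$ (and $l-1$ rather than $l$): the Gorenstein identity $\sum_jq(-\vec{x_{l+j}})=\sum_iq(\vec{x_i})$ in $H$ forces the last would-be step to collapse, which is what compresses the iteration by one.
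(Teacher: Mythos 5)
Your combinatorial reduction is fine: $m\in J(I)$, $J(\mu_m^-(I))=(J(I)\setminus\{m\})\cup\{m+p\}$, and it suffices to track the non-$M$ summand. The gap is in the core inductive claim that at each step the exchange kernel is ``a single indecomposable divisorial module whose weight is $m$ shifted by one of these generators,'' so that the $k$-th iterate has non-$M$ summand $M_{q^{-1}(m+s_k)}$ with $s_k$ a partial sum of $k$ distinct generators. This is false as soon as $l\geq 3$ (resp.\ $l'\geq 3$). Computing $\mu_M^-$ requires a right $(\add M^*)$-approximation of $\bigoplus_{\vec{m}\in q^{-1}(m)}S_{-\vec{m}}$; its kernel is the degree $-\vec{m}$ part of the first Koszul syzygy of $\mathfrak{a}=(x_1,\dots,x_l)$, which has rank $l-1$ over $R$, not rank one, and the later kernels are degree pieces of higher Koszul syzygies of rank $\binom{l-1}{i}\geq 2$ at intermediate stages. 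Your picture would also contradict the paper's own classification: if each intermediate module were again a direct sum of divisorial modules, its weight set $(J(I)\setminus\{m\})\cup\{m+q(\vec{x_i})\}$ would fail to be a complete set of representatives of the $\mathbb{Z}p$-orbits (since $q(\vec{x_i})\notin\mathbb{Z}p$ in general), so by Proposition \ref{charmaxJX} and Theorem \ref{upNCCRcorr} it could not give a toric NCCR, whereas Iyama--Wemyss mutation of an NCCR is an NCCR. You flagged exactly this point as ``the principal obstacle'' but offered no argument, and the statement you would need is not available in the paper in that form --- it is simply not true. Only in the case $l=l'=2$ (a single mutation step) does your description of the kernels hold.

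The paper's proof avoids the issue by never asserting that the intermediate summands are divisorial. It takes the graded Koszul complex $0\to F_l\to\cdots\to F_1\to F_0\to S/\mathfrak{a}\to 0$ of $x_1,\dots,x_l$, observes that $(S/\mathfrak{a})_{\vec{g}}=0$ in the relevant degrees so that the truncations stay exact in degree $-\vec{m}$, and uses Proposition \ref{refhom} to show that $(F_1)_{-\vec{m}}\to(F_0)_{-\vec{m}}$ and $(F_{i+1})_{-\vec{m}}\to(\Omega^i\mathfrak{a})_{-\vec{m}}$ are right $(\add M^*)$-approximations. Hence $(\mu_M^-)^i$ replaces the non-$M$ summand by $\bigoplus_{\vec{m}\in q^{-1}(m)}((\Omega^i\mathfrak{a})_{-\vec{m}})^*$, a genuinely higher-rank module for $0<i<l-1$, and only at the last step, since $\Omega^{l-1}\mathfrak{a}=F_l=S(-\vec{x_1}-\cdots-\vec{x_l})$, does one land back on the divisorial modules of weight $m+p$; this is also where the count $l-1$ comes from, rather than from a ``collapsing'' last step. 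To repair your argument you would have to replace the induction on single divisorial summands by this (or an equivalent) identification of the intermediate mutations with Koszul syzygies.
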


Combining our results with combinatorics of upper sets, we obtain the following corollary, which verifies the non-commutative Bondal--Orlov conjecture for toric NCCRs of Gorenstein toric singularities with divisor class group of rank one.

\begin{Cor}[Corollary \ref{toricNCCRconn}]
Let $R$ be a Gorenstein toric singularity with $\rk\Cl(R)=1$. Then all toric NCCRs of $R$ are connected by iterated Iyama--Wemyss mutations. In particular, they are all derived equivalent to each other.
\end{Cor}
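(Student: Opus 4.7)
The plan is to combine Theorems~\ref{upNCCRcorrintro} and~\ref{toricmut} with a combinatorial connectivity statement for non-trivial upper sets in $H$. By Theorem~\ref{upNCCRcorrintro} the toric NCCRs of $R$ are parametrised by non-trivial upper sets of $H$, and Theorem~\ref{toricmut} shows that deleting a minimal element $m$ from an upper set $I$ (equivalently, adjoining a maximal element of $I^c$ to $I$) corresponds to a finite sequence of Iyama--Wemyss mutations of the associated NCCR $M_{q^{-1}(J(I))}$. It therefore suffices to prove that any two non-trivial upper sets in $H$ are connected by such one-element deletions and additions.

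For this, given non-trivial upper sets $I_1, I_2 \subseteq H$, I would route through their intersection $I_1 \cap I_2$, which is again an upper set. The crucial observation is that a minimal element $m$ of $I_1 \setminus I_2$ is automatically a minimal element of $I_1$: any $h < m$ with $h \in I_1$ would satisfy $h \notin I_2$ (since $I_2$ is upper-closed and $m \notin I_2$), hence $h \in I_1 \setminus I_2$, contradicting minimality of $m$. Thus removing $m$ from $I_1$ realises a legitimate mutation, and one can peel off $I_1 \setminus I_2$ one element at a time until reaching $I_1 \cap I_2$; a symmetric argument (or, dually, repeated addition of maximal elements of the complement) then connects $I_1 \cap I_2$ to $I_2$.

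The key ingredient making this procedure terminate is the finiteness of $I_1 \setminus I_2$. Writing $H \cong T \oplus \mathbb{Z}$ with $T$ finite (since $\Cl(R)$ is finitely generated of rank one), and using $\pi(p) > 0$ together with the fact that the generators of the positive cone all lie at strictly positive $\pi$-levels, one shows that any non-empty upper set $I' \subseteq H$ contains every element above some fixed $\pi$-level, while any proper upper set is disjoint from all elements below some fixed $\pi$-level. Applied to $I_1$ and $I_2$, these bounds confine $I_1 \setminus I_2$ to finitely many levels, hence to a finite set, and simultaneously guarantee $I_1 \cap I_2 \neq \emptyset$ and $I_1 \cup I_2 \neq H$, so every intermediate upper set remains non-trivial. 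Finiteness of $J(I)$ is preserved throughout because the identity $J(I \setminus \{m\}) = (J(I) \setminus \{m\}) \cup \{m+p\}$ shows $|J(I)|$ is invariant under mutation. I expect the main technical care to lie in establishing the level bounds and the minimality-transfer observation cleanly; once these are in hand, the rest is an induction on $|I_1 \setminus I_2|$.

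Finally, the derived equivalence assertion is immediate: a single Iyama--Wemyss mutation induces a derived equivalence of endomorphism algebras by~\cite{IW}, so composing a finite chain of such equivalences gives a derived equivalence between any two toric NCCRs of $R$.
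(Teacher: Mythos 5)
Your argument is correct and follows essentially the same route as the paper: parametrize toric NCCRs by non-trivial upper sets via Theorem \ref{upNCCRcorr}, connect any two upper sets through their intersection by peeling off minimal elements (your minimality-transfer and finiteness argument is exactly the paper's Proposition \ref{muttrans}, with your level bounds playing the role of the finitely-many-orbits hypothesis and Lemma \ref{boundary}), and then translate each such step into Iyama--Wemyss mutations by Theorem \ref{toricmut}, with derived equivalence coming from \cite{IW}.
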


The second main theme of this paper is a dimer-model realization of the above classification in the non-pyramidal case. More precisely, we give explicit quivers with relations for the toric NCCRs classified above via a higher-dimensional analogue of the quiver-theoretic construction associated with dimer models. A dimer model is a bipartite graph on the real two-torus which appears in mathematical physics, algebraic geometry, representation theory and related areas \cite{Bro,FHKVW,IU07}. A celebrated result by Broomhead shows that, under suitable conditions, the dual quiver of a dimer model gives a quiver with relations of a toric NCCR of a $3$-dimensional Gorenstein toric singularity \cite{Bro}. Motivated by this picture, higher-dimensional analogues of dimer models have been pursued by \cite{CFG19,FLS16}. In this paper, we construct quivers with relations of the toric NCCRs from quivers on the $d$-dimensional torus $T^d$ equipped with cuts. When $d=2$, this construction recovers the dual quivers of dimer models on the real two-torus. Thus our construction provides a higher-dimensional analogue of the quiver-theoretic aspect of the dimer-model picture, without requiring a separate definition of higher-dimensional dimer models.

We now state this result more precisely. Under the notations in Theorem \ref{upNCCRcorrintro}, assume that $d=l+l'$, so that $\Cl(R)=H$. Let $e_i\in\mathbb{Z}^{l+1}$ be the $i$-th unit vector for $0\leq i\leq l$. Put $\alpha_i:=e_i-e_{i-1}$ for $1\leq i\leq l$ and $\alpha_0:=e_0-e_l$. Let $L:=\{v=(v_i)_{i=0}^l\in\mathbb{Z}^{l+1}\mid\sum_{i=0}^lv_i=0\}=\sum_{i=0}^l\mathbb{Z}\alpha_i\subseteq\mathbb{Z}^{l+1}$ be a $l$-dimensional lattice. Similarly, we define a $l'$-dimensional lattice $L'=\sum_{i'=0}^{l'}\mathbb{Z}\alpha'_{i'}\subseteq\mathbb{Z}^{l'+1}$. We define an infinite quiver $\widehat{Q}$ as
\[\widehat{Q}_0:=L\oplus L' \text{ and}\]
\[\widehat{Q}_1:=\bigsqcup_{i=0}^l\{x\to x+\alpha_i\mid x\in L\oplus L'\}\sqcup\bigsqcup_{i'=0}^{l'}\{x\to x+\alpha'_{i'}\mid x\in L\oplus L'\}.\]
We have a natural surjective group homomorphism
\[L\oplus L'\to\Cl(R)/\mathbb{Z}\vec{s};\alpha_i\mapsto\vec{x}_i+\mathbb{Z}\vec{s},\alpha'_{i'}\mapsto\vec{x}'_{i'}+\mathbb{Z}\vec{s},\]
where $\vec{s}:=\sum_{i=0}^l\vec{x}_i=-\sum_{i'=0}^{l'}\vec{x}'_{i'}\in\Cl(R)$. Let $B\subseteq L\oplus L'$ be its kernel. A subset $\widehat{C}\subseteq\widehat{Q}_1$ is called a {\it cut} if it satisfies certain conditions (Definition \ref{defcut}). We show that from a non-trivial upper set $I\subseteq\Cl(R)$, we can construct a $B$-periodic cut $\widehat{C}\subseteq\widehat{Q}_1$ (see Theorem \ref{cutupcorr}, which we call {\it cut-upper set correspondence}). From this $\widehat{C}\subseteq\widehat{Q}_1$, we can construct a new infinite quiver $\widehat{Q}(\widehat{C})$ by removing arrows in $\widehat{C}$ and adding certain arrows. Let $Q(C):=\widehat{Q}(\widehat{C})/B$ be the quotient quiver. Then we have the following result.

\begin{Thm}[Theorem \ref{dimerreal}, Dimer realization theorem]
In the above notations, put $J:=I\cap(I^c+s)\subseteq\Cl(R)$. From the quiver $Q(C)$, we can construct an ideal $I(C)\subseteq kQ(C)$ in a natural way. Then we have
\[\End_R(M_J)\cong kQ(C)/I(C).\]
\end{Thm}

In Appendix \ref{apptorsion}, we treat the rank-zero case, namely the case where $\Cl(R)$ is torsion. We prove that such a Gorenstein toric singularity has a unique toric NCCR over an arbitrary base field, and describe its endomorphism algebra by the same dimer-theoretic quiver construction.

In Appendix \ref{appvolume}, we give an explicit formula for the volume of $d$-dimensional lattice polytopes with $d+2$ vertices (Theorem \ref{vol}). Using this formula, we can verify the conjecture \cite[3.8]{VdB23} raised by Van den Bergh for Gorenstein toric singularities with divisor class group of rank one (Theorem \ref{volrk}): the number of indecomposable direct summands of a toric NCCR coincides with the normalized volume of the corresponding lattice polytope.

\section*{Conventions}
Throughout this paper, $k$ denotes an arbitrary field. For an abelian group $G$ and a $G$-graded ring $A$, let $\mod^G\!A,\refl^G\!A$ and $\proj^G\!A$ denote the categories of finitely generated $G$-graded right $A$-modules, $G$-graded reflexive right $A$-modules and finitely generated $G$-graded projective right $A$-modules respectively.

\section*{Acknowledgements}
The author is grateful to Wahei Hara, Osamu Iyama and Koji Matsushita for fruitful discussions. This work was supported by the WINGS-FMSP program at the Graduate School of Mathematical Sciences, the University of Tokyo, and JSPS KAKENHI Grant Number JP25KJ0818.

\section{Combinatorics on upper sets}\label{combup}

In this section, we collect some basic properties of upper sets in partially ordered sets on which the group $\mathbb{Z}$ of integers acts. Let $X$ be a partially ordered set.

\begin{Def}
We call a subset $I\subseteq X$ an {\it upper set} if for all $x\in I$ and $y\in X$ with $x\leq y$, $y\in I$ holds. An upper set $I\subseteq X$ is called {\it non-trivial} if $I\neq\emptyset,X$.

Put $\I_X:=\{I\subseteq X\colon\text{non-trivial upper set}\}$.
\end{Def}

Assume that $\mathbb{Z}$ acts on the set $X$ and satisfies the following conditions. Here, we write
\[x+np:=n\cdot x\]
for $n\in\mathbb{Z}$. Here, $p$ is just a symbol.
\begin{enumerate}
\item[(A1)] $x<x+p$ holds for all $x\in X$.
\item[(A2)] $x\leq y$ implies $x+np\leq y+np$ for all $x,y\in X$ and $n\in\mathbb{Z}$.
\item[(A3)] For any $x,y\in X$, there exists $n\in\mathbb{Z}$ such that $x+np\geq y$ holds.
\end{enumerate}

Thanks to the condition (A3), we can prove the following lemma.

\begin{Lem}\label{boundary}
For $I\in\I_X$ and $x\in X$, there exists a unique $n_0\in\mathbb{Z}$ so that the following conditions are equivalent for $n\in\mathbb{Z}$.
\begin{enumerate}
\item $x+np\in I$
\item $n\geq n_0$
\end{enumerate}
\end{Lem}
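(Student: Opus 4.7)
The plan is to fix $x\in X$ and $I\in\I_X$ and study the set $S:=\{n\in\mathbb{Z}\mid x+np\in I\}$. I will show that $S$ is upward closed in $\mathbb{Z}$, non-empty, and bounded below; this forces $S=\{n\in\mathbb{Z}\mid n\geq n_0\}$ with $n_0:=\min S$, which is precisely the required equivalence, and uniqueness of $n_0$ is automatic from the characterization.

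Upward closedness follows at once from (A1) and the upper-set property of $I$: if $x+np\in I$, then (A1) gives $x+np<x+(n+1)p$, so $x+(n+1)p\in I$, and iterating shows $n+k\in S$ for every $k\geq 0$. Non-emptiness of $S$ uses $I\neq\emptyset$: choose any $y\in I$ and apply (A3) to obtain $n$ with $x+np\geq y$; since $I$ is an upper set, $x+np\in I$, and hence $n\in S$.

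The main step is to show $S\neq\mathbb{Z}$, which uses $I\neq X$. I will pick $z\in X\setminus I$ and apply (A3) with the roles of $x$ and $z$ exchanged to obtain $n'\in\mathbb{Z}$ with $z+n'p\geq x$. Translating by $-n'p$ via (A2) rewrites this as $z\geq x+(-n')p$, so if $-n'$ were in $S$ then the upper-set property of $I$ would force $z\in I$, contradicting the choice of $z$. Hence $-n'\notin S$, giving a lower bound for $S$ in $\mathbb{Z}$. The one subtlety is exactly this direction-reversal of (A3): the axiom directly produces shifts of $x$ that lie \emph{above} a target, whereas a lower bound on $S$ requires shifts of $x$ that lie \emph{below} a chosen element of $X\setminus I$, and the translation step using (A2) is what bridges the two.
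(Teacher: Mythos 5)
Your proof is correct and takes essentially the same route as the paper: pick $y\in I$ and $z\in I^c$, use (A3) (with the roles exchanged and (A2) to translate, exactly the direction-reversal you note) to get a shift of $x$ inside $I$ and one outside, and conclude via upward closedness of $\{n\mid x+np\in I\}$ from (A1). You merely spell out the upward-closedness and boundedness-below steps that the paper leaves implicit.
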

\begin{proof}
Take $y\in I$ and $z\in I^c$. By (A3), there exists $l,m\in\mathbb{Z}$ such that $x+lp\geq y$ and $x-mp\leq z$ hold. This means $x+lp\in I$ and $x-mp\notin I$. Thus such an $n_0$ exists uniquely.
\end{proof}

Define a set $\widetilde{\J}_X$ and $\J_X$ consisting of subsets of $X$ as
\[\widetilde{\J}_X:=\{J\subseteq X\mid\text{For any }x,y\in J,\text{ we have }x\ngeq y+p.\}\text{ and}\]
\[\J_X:=\{J\in\widetilde{\J}_X\colon\text{maximal with respect to inclusion}\}\subseteq\widetilde{\J}_X.\]
Our first aim is to prove the following bijection.

\begin{Thm}\label{upJX}
Consider the following sets.
\[J(-)\colon\I_X\rightleftarrows\J_X :I(-)\]
Then $J(I):=I\cap(I^c+p)$ and $I(J):=\{x\in X\mid\text{There exists }y\in J\text{ with }x\geq y.\}$ give inverse maps to each other.
\end{Thm}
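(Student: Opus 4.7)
The plan is to check four things in order: $J(-)$ lands in $\J_X$, $I(-)$ lands in $\I_X$, and the two compositions $I\circ J$ and $J\circ I$ are identities. The main tool throughout is Lemma \ref{boundary}: each orbit $\{x+np\}_{n\in\mathbb{Z}}$ meets any $I\in\I_X$ in a ray $\{n\geq n_0\}$, so $J(I)=I\cap(I^c+p)$ picks out exactly one representative per orbit, namely $x+n_0 p$. I will use this orbit picture repeatedly.

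For $J(I)\in\J_X$: membership in $\widetilde{\J}_X$ is immediate because if $x,y\in J(I)$ and $x\geq y+p$, then $x-p\geq y\in I$, so $I$ being upper forces $x-p\in I$, contradicting $x\in I^c+p$. For maximality, any $z\notin J(I)$ has a unique $z+n_0 p\in J(I)$ with $n_0\neq 0$; then (A1)--(A2) yield a $+p$ shift relation between $z$ and $z+n_0 p$ (specifically $z+n_0 p\geq z+p$ when $n_0\geq 1$, and $z\geq (z+n_0 p)+p$ when $n_0\leq -1$), so adjoining $z$ violates the defining condition of $\widetilde{\J}_X$. For $I(J)\in\I_X$: it is automatically upper; nontriviality follows because $J$ is nonempty (a singleton always lies in $\widetilde{\J}_X$ by (A1), so the empty set cannot be maximal when $X\neq\emptyset$), and for any $y\in J$ the point $y-p$ is not in $I(J)$, since $y-p\geq z\in J$ would give $y\geq z+p$, against $J\in\widetilde{\J}_X$.

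The identity $I(J(I))=I$ is routine: the $\supseteq$ direction uses $J(I)\subseteq I$ together with the upper-set property, while $\subseteq$ uses that for $x\in I$ the boundary index $n_0$ from Lemma \ref{boundary} satisfies $n_0\leq 0$, giving $x\geq x+n_0 p\in J(I)$. The main obstacle, and the only step that really needs maximality of $J$, is $J(I(J))=J$. The inclusion $\supseteq$ was already observed during the nontriviality check. For $\subseteq$, suppose $x\in J(I(J))\setminus J$; maximality of $J$ supplies $u,v\in J\cup\{x\}$ with $u\geq v+p$. Antisymmetry together with (A1) rules out $u=v=x$, and $u,v\in J$ is excluded by $J\in\widetilde{\J}_X$. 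If $u=x$ and $v\in J$, then $x-p\geq v$ forces $x-p\in I(J)$, contradicting $x\in J(I(J))$. If $v=x$ and $u\in J$, combine $u\geq x+p$ with a witness $x\geq y$ for some $y\in J$ (available from $x\in I(J)$) via (A2) to get $u\geq y+p$ with $u,y\in J$, again contradicting $J\in\widetilde{\J}_X$. This exhausts the cases and completes the proof.
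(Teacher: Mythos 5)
Your proof is correct and follows essentially the same route as the paper: both rest on the orbit-ray picture of Lemma \ref{boundary} (so that $J(I)$ is a complete set of orbit representatives) and on the maximality of $J$ to force $J(I(J))=J$, with $I(J(I))=I$ handled identically. The only cosmetic difference is that you reprove the content of Propositions \ref{charJX} and \ref{charmaxJX} inline and establish $J(I(J))\subseteq J$ by a direct case analysis, where the paper simply notes $J(I(J))\in\widetilde{\J}_X$ and invokes maximality once; you also spell out the non-triviality of $I(J)$, which the paper leaves implicit.
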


We make some preparations.

\begin{Prop}\label{charJX}
For a subset $J\subseteq X$, the following conditions are equivalent.
\begin{enumerate}
\item $J\in\widetilde{\J}_X$
\item There exists $I\in\I_X$ such that $J\subseteq I\cap(I^c+p)$ holds.
\end{enumerate}
\end{Prop}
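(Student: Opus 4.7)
The plan is to prove each implication by taking $I = I(J)$ (the upper set generated by $J$, as in Theorem \ref{upJX}) as the natural candidate for the witness, and to reduce everything to the definition of $\widetilde{\J}_X$ via a single elementary observation: for an upper set $I$, the statement $z\in I^c+p$ is equivalent to $z-p\notin I$.

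For the direction (2) $\Rightarrow$ (1), I would proceed by contradiction. Given $I\in\I_X$ with $J\subseteq I\cap(I^c+p)$, and $x,y\in J$ with $x\geq y+p$, we have $x-p\geq y$. Since $y\in I$ and $I$ is an upper set, $x-p\in I$. On the other hand, $x\in I^c+p$ gives $x-p\in I^c$, which is a contradiction. Hence no such pair $x,y$ exists, so $J\in\widetilde{\J}_X$.

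For the direction (1) $\Rightarrow$ (2), I would define $I:=\{z\in X\mid\exists y\in J,\ z\geq y\}$, which is clearly an upper set containing $J$. To show $J\subseteq I^c+p$, take $x\in J$ and suppose toward contradiction that $x-p\in I$. Then $x-p\geq y$ for some $y\in J$, giving $x\geq y+p$, contradicting (1). Thus $x-p\in I^c$, i.e., $x\in I^c+p$, so $J\subseteq I\cap(I^c+p)$.

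The only genuine subtlety is the non-triviality of $I$. If $J\neq\emptyset$, pick $x\in J$; then $x\in I$, and the previous paragraph shows $x-p\notin I$, so $I$ is non-trivial. If $J=\emptyset$, then (1) is vacuous, and for (2) I would instead pick any $z\in X$ and set $I:=\{y\in X\mid y\geq z\}$: this contains $z$, and by (A1) we have $z-p<z$, so $z-p\notin I$, giving non-triviality; then trivially $\emptyset\subseteq I\cap(I^c+p)$. I expect no serious obstacle; the main thing to be careful about is invoking axiom (A1) to rule out triviality in the edge case $J=\emptyset$, and consistently using that $I$ being an upper set translates membership in $I^c+p$ into non-membership in $I$ after shifting by $p$.
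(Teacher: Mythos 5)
Your proof is correct and follows essentially the same route as the paper: both directions use the upper set generated by $J$ and the observation that $x\in I^c+p$ means $x-p\notin I$. The only difference is that you also verify non-triviality of $I$ (including the $J=\emptyset$ edge case via (A1)), a point the paper's proof passes over silently.
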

\begin{proof}
(1)$\Rightarrow$(2) Put $I:=\{x\in X\mid\text{There exists }y\in J\text{ with }x\geq y.\}\in\I_X$. Take $x\in J$. Then $x\in I$ holds by the definition of $I$. Suppose $x\notin I^c+p$ holds. Since $x-p\in I$, there exists $y\in J$ such that $x-p\geq y$, which contradicts to $J\in\widetilde{\J}_X$. Thus $J\subseteq I\cap(I^c+p)$ holds.

(2)$\Rightarrow$(1) Take $x,y\in I\cap(I^c+p)$. Then we have $x-p\notin I$ and $y\in I$. Since $I$ is an upper set, we obtain $x-p\ngeq y$. Thus $I\cap(I^c+p)\in\widetilde{\J}_X$ holds.
\end{proof}

\begin{Prop}\label{charmaxJX}
For $J\in\widetilde{\J}_X$, the following conditions are equivalent.
\begin{enumerate}
\item $J\in\J_X$
\item $J\subseteq X$ is a complete set of representatives for the action of $\mathbb{Z}$ on $X$.
\item There exists $I\in\I_X$ such that $J=I\cap(I^c+p)$ holds.
\end{enumerate}
\end{Prop}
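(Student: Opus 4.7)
My plan is to establish the three equivalences cyclically as $(1)\Rightarrow(3)\Rightarrow(2)\Rightarrow(1)$, which lets me reuse Proposition \ref{charJX} and Lemma \ref{boundary} rather than reproving them.

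For $(1)\Rightarrow(3)$, the implication (1)$\Rightarrow$(2) of Proposition \ref{charJX} furnishes some $I\in\I_X$ with $J\subseteq I\cap(I^c+p)$, while the implication (2)$\Rightarrow$(1) of the same proposition tells me that $I\cap(I^c+p)$ itself lies in $\widetilde{\J}_X$. Since $J$ is assumed maximal, the inclusion must be an equality. For $(3)\Rightarrow(2)$, I fix $x\in X$ and apply Lemma \ref{boundary} to $I$ to obtain the unique threshold $n_0$; then $x+n_0p\in I$ and $x+(n_0-1)p\notin I$ place $x+n_0p$ in $I\cap(I^c+p)=J$, giving existence of a representative. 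For uniqueness, if $x+mp$ and $x+np$ both lie in $J$ with $m<n$, I would derive $x+np\geq(x+mp)+p$, contradicting the definition of $\widetilde{\J}_X$. Finally, for $(2)\Rightarrow(1)$, I take any $x\notin J$, write it uniquely as $y+np$ with $y\in J$ and $n\neq0$, and verify that adding $x$ to $J$ breaks the defining condition of $\widetilde{\J}_X$: the case $n\geq 1$ yields $x\geq y+p$, and the case $n\leq -1$ yields $y\geq x+p$.

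The only technical step that requires care, and so the main (minor) obstacle, is the iteration of axiom (A1) needed in both $(3)\Rightarrow(2)$ and $(2)\Rightarrow(1)$: from $y<y+p$ for all $y$ one deduces the chain $\cdots<y-p<y<y+p<y+2p<\cdots$, and more generally $y+kp\leq y+lp$ whenever $k\leq l$. This follows by induction from (A1) applied to $y+kp$ combined with (A2) used to shift any comparison by an integer multiple of $p$. Once this monotonicity is in hand, the three implications are short and essentially package together what Lemma \ref{boundary} and Proposition \ref{charJX} have already established.
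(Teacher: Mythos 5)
Your proof is correct and follows essentially the same route as the paper: $(1)\Rightarrow(3)$ via Proposition \ref{charJX}, $(3)\Rightarrow(2)$ via the threshold $n_0$ of Lemma \ref{boundary}, and $(2)\Rightarrow(1)$ by the observation that $x=x'+np$ with $n\geq1$ forces $x\geq x'+p$. The only cosmetic difference is that you derive uniqueness in $(3)\Rightarrow(2)$ from the $\widetilde{\J}_X$-condition rather than directly from the equivalence in Lemma \ref{boundary}, and you make explicit the iterated use of (A1) and (A2) that the paper leaves implicit.
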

\begin{proof}
(1)$\Rightarrow$(3) follows from Proposition \ref{charJX}.

(3)$\Rightarrow$(2) For $I$ and $x\in X$, take $n_0\in\mathbb{Z}$ as in Lemma \ref{boundary}. Then we have $x+np\in I\cap(I^c+p)\Leftrightarrow n=n_0$ for $n\in\mathbb{Z}$.

(2)$\Rightarrow$(1) Assume there exist $x,x'\in J$ which belong to the same orbit with respect to the action of $\mathbb{Z}$ on $X$. We may assume $x=x'+np$ for some $n\geq0$. If $n>0$, then $x\geq x'+p$ holds, which contradicts to $J\in\widetilde{\J}_X$. Thus $x=x'$ must hold. This implies that $J\in\widetilde{\J}_X$ is maximal.
\end{proof}

Now we can prove Theorem \ref{upJX}.

\begin{proof}[Proof of Theorem \ref{upJX}]
The well-definedness of $J(-)$ follows from Proposition \ref{charmaxJX}. In the proof of (1)$\Rightarrow$(2) of Proposition \ref{charJX}, we showed $J\subseteq J(I(J))$. By the maximality of $J$, we have $J=J(I(J))$. Next, we prove $I(J(I))=I$. $I(J(I))\subseteq I$ is obvious. Take $x\in I$. Take $n_0\in\mathbb{Z}$ as in Lemma \ref{boundary}. Then $x+n_0p\in J(I)$ and $x\geq x+n_0p$ hold. This means $x\in I(J(I))$.
\end{proof}

Next, we introduce the notion of mutation for upper sets.

\begin{Def}\label{defmutup}
Let $I\in\I_X$ and take a minimal element $m\in I$. Then we define the {\it mutation} $\mu_{m}^-(I)$ of $I$ at $m$ as
\[\mu_m^-(I):=I\setminus\{m\}.\]
\end{Def}

\begin{Rem}
Observe that $\mu_m^-(I)\in\I_X$ holds. In addition, we have
\[J(\mu_m^-(I))=J(I)\sqcup\{m+p\}\setminus\{m\}.\]
\end{Rem}

Observe that $\I_X$ has a natural partial order defined by inclusion:
\[I\leq I':\Leftrightarrow I\subseteq I'.\]
Since $I\cap I', I\cup I'\in\I_X$ hold for $I,I'\in\I_X$, $\I_X$ becomes a lattice with this order. We can see that the arrows in the Hasse quiver of $\I_X$ correspond to the mutations.

\begin{Prop}
For $I,I'\in\I_X$ with $I\geq I'$, the following conditions are equivalent.
\begin{enumerate}
\item In the Hasse quiver of $\I_X$, there exists an arrow $I\to I'$.
\item $\sharp(I\setminus I')=1$
\item $I'=\mu_m^-(I)$ holds for some minimal element $m\in I$.
\end{enumerate}
\end{Prop}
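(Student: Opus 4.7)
The plan is to establish the three-way equivalence by proving (2) $\Leftrightarrow$ (3) and (2) $\Leftrightarrow$ (1); the main content lies in the implication (1) $\Rightarrow$ (2). First, (2) $\Leftrightarrow$ (3) is essentially the definition of $\mu_m^-$: if $I\setminus I'=\{m\}$, then $I'=I\setminus\{m\}$ is an upper set if and only if $m$ is minimal in $I$, since any $y\in I$ strictly below $m$ would otherwise lie in $I'$ with $m\geq y$ but $m\notin I'$, violating upperness. The implication (2) $\Rightarrow$ (1) is also routine: given $I\setminus I'=\{m\}$, any intermediate upper set $I''$ satisfies the disjoint decomposition $(I\setminus I'')\sqcup(I''\setminus I')=\{m\}$, forcing $I''\in\{I,I'\}$.

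The main step is (1) $\Rightarrow$ (2), which I would prove by contrapositive. Given $|I\setminus I'|\geq 2$, pick distinct $x,y\in I\setminus I'$; by antisymmetry of $\leq$, we may arrange $y\not\geq x$. I would then exhibit the intermediate upper set
\[I'':=I'\cup\{z\in I\mid z\geq x\}.\]
The verification consists of three brief checks: that $I''$ is an upper set (split cases $a\in I'$ versus $a\geq x$, using that both $I$ and $I'$ are upper sets), that $I''\supsetneq I'$ (witnessed by $x\in I\setminus I'$), and that $I''\subsetneq I$ (witnessed by $y$, which lies in $I\setminus I'$ but fails $y\geq x$, hence lies outside $I''$). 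This contradicts the Hasse-arrow hypothesis.

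The main obstacle to watch for is the tempting but flawed strategy of finding a minimal element of $I$ inside $I\setminus I'$ and removing it — such an element need not exist in general when $X$ admits infinite descending chains (the axioms (A1)--(A3) do not preclude this). The construction above sidesteps the issue by building the intermediate upper set \emph{from above}, growing $I'$ by the principal up-set of $x$ inside $I$, rather than trying to prune $I$ from the bottom.
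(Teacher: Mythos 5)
Your proof is correct and follows essentially the same route as the paper: the paper also treats (1)$\Leftarrow$(2)$\Leftrightarrow$(3) as routine and proves (1)$\Rightarrow$(2) by the contrapositive, exhibiting the intermediate upper set $I'':=I'\cup\{z\in X\mid z\geq y\}$ for $x\ngeq y$, which is your construction up to swapping the roles of $x$ and $y$ (your intersection with $I$ is automatic since $I$ is an upper set). Your closing remark about avoiding a minimal-element argument in the absence of finiteness is a fair observation but not part of the paper's proof.
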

\begin{proof}
(1)$\Leftarrow$(2)$\Leftrightarrow$(3) is obvious. We prove (1)$\Rightarrow$(2). Assume there exists an arrow $I\to I'$. Suppose there exist two distinct elements $x,y\in I\setminus I'$. We may assume $x\ngeq y$. Then
\[I'':=I'\cup\{z\in X\mid z\geq y\}\in\I_X\]
satisfies $I>I''>I'$, which is a contradiction.
\end{proof}

We show that under a certain finiteness condition, all non-trivial upper sets in $X$ are connected by iterated mutations.

\begin{Prop}\label{muttrans}
Assume that the action of $\mathbb{Z}$ on $X$ has only finitely many orbits. Let $I,I'\in\I_X$ with $I>I'$. Then $I$ becomes $I'$ after iterated mutations.
\end{Prop}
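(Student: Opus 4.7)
The plan is to induct on $\sharp(I\setminus I')$, so the first step is to check that this cardinality is finite. I would fix a $\mathbb{Z}$-orbit $O\subseteq X$, pick a basepoint $x\in O$, and invoke Lemma \ref{boundary} to obtain integers $n_0\leq n_0'$ with $I\cap O=\{x+np:n\geq n_0\}$ and $I'\cap O=\{x+np:n\geq n_0'\}$; hence $(I\setminus I')\cap O$ has exactly $n_0'-n_0$ elements and is in particular finite. Summing over the finitely many orbits, as permitted by hypothesis, shows that $I\setminus I'$ is a finite set.

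The crucial step is to locate a minimal element of $I$ that lies in $I\setminus I'$. Since $I\setminus I'$ is finite and non-empty, I would pick any element $m$ minimal in $I\setminus I'$ with respect to $\leq$. If some $y\in I$ satisfied $y<m$, then $y\in I'$ would force $m\in I'$ because $I'$ is an upper set, contradicting $m\in I\setminus I'$; hence $y\in I\setminus I'$, which contradicts the minimality of $m$ in $I\setminus I'$. Thus $m$ is also minimal in $I$, and the mutation $\mu_m^-(I)=I\setminus\{m\}$ is defined.

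To finish, I would verify $\mu_m^-(I)\in\I_X$ and set up the induction. Since $m\notin I'$ we have $I'\subseteq\mu_m^-(I)$, and $\mu_m^-(I)\subsetneq I\subsetneq X$, so $\mu_m^-(I)$ is a non-trivial upper set. Moreover $\sharp(\mu_m^-(I)\setminus I')=\sharp(I\setminus I')-1$, and the inductive hypothesis produces a sequence of mutations from $\mu_m^-(I)$ to $I'$, which prepended with $\mu_m^-$ yields the desired sequence from $I$ to $I'$.

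The main obstacle is the second paragraph: guaranteeing that the element removed at each step is genuinely outside $I'$, so the induction can proceed. This is precisely where the orbit-finiteness hypothesis enters the argument, by way of the finiteness of $I\setminus I'$; without it, one could only remove a minimal element of $I$ and have no control on whether it belongs to $I'$.
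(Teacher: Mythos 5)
Your proof is correct and follows essentially the same route as the paper: finiteness of $I\setminus I'$ from the orbit hypothesis, choice of a minimal element of $I\setminus I'$ (which is automatically minimal in $I$), one mutation, and induction on $\sharp(I\setminus I')$. You merely spell out two steps the paper leaves implicit (the orbit-by-orbit finiteness via Lemma \ref{boundary} and the argument that minimality in $I\setminus I'$ gives minimality in $I$), which is fine.
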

\begin{proof}
By our assumption, we have $\sharp(I\setminus I')<\infty$. Thus there exists a minimal element $m\in I\setminus I'$. This also gives a minimal element of $I$. Now we have $I>\mu_m^-(I)\geq I'$ and $\sharp(\mu_m^-(I)\setminus I')=\sharp(I\setminus I')-1$. Thus by the induction, we get the assertion.
\end{proof}

We exhibit a corollary for later use.

\begin{Cor}\label{mutgen}
Assume that the action of $\mathbb{Z}$ on $X$ has only finitely many orbits. Assume a subset $Y\subseteq X$ satisfies the following conditions.
\begin{enumerate}
\item For any $I\in\I_X$ and any minimal element $m\in I$, $J(I)\subseteq Y$ holds if and only if $J(\mu_m^-(I))\subseteq Y$ holds.
\item There exists $I_0\in\I_X$ such that $J(I_0)\subseteq Y$ holds.
\end{enumerate}
Then we have $Y=X$.
\end{Cor}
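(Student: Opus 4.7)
The plan is to define $Z:=\{I\in\I_X\mid J(I)\subseteq Y\}\subseteq\I_X$, show via a connectivity argument that $Z=\I_X$, and then note that $\bigcup_{I\in\I_X}J(I)=X$ by a principal-upper-set construction. Together these yield $X\subseteq Y$, i.e.\ $Y=X$.

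For the first step, condition (1) says exactly that $Z$ is closed under both $\mu_m^-$ and its inverse; hence $Z$ is a union of connected components of the \emph{undirected} Hasse quiver of $\I_X$. I would then upgrade Proposition \ref{muttrans} to undirected connectivity of this quiver as follows: given $I,I'\in\I_X$, at least one of $I\cup I'$ or $I\cap I'$ again lies in $\I_X$, and Proposition \ref{muttrans} connects $I$ and $I'$ to that intermediate by iterated mutations. Combined with $I_0\in Z$ from condition (2), this gives $Z=\I_X$.

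For the second step, given $x\in X$, I consider the principal upper set $I_x:=\{y\in X\mid y\geq x\}$. By (A1), $x-p<x$, so $x-p\notin I_x$, which shows both that $I_x\neq X$ (hence $I_x\in\I_X$) and that $x\in I_x\cap(I_x^c+p)=J(I_x)$. Since $I_x\in\I_X=Z$, we obtain $x\in J(I_x)\subseteq Y$, and as $x$ was arbitrary we are done.

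The main obstacle is the sublemma needed in the first step: no non-trivial subset of $X$ can be simultaneously an upper and a lower set (equivalently, the complement of an $I\in\I_X$ is never again in $\I_X$, which is what rules out the degenerate case $I\cup I'=X$ and $I\cap I'=\emptyset$ above). I would prove this by taking $x\in I$, $y\in I^c$ and using (A3) to produce $n\in\mathbb{Z}$ with $x+np\geq y$; then in both cases $n\geq0$ (getting $x+np\in I$ by upperness and (A1), then $y\in I$ by lowerness from $y\leq x+np$) and $n<0$ (getting $y\leq x+np\leq x$ by (A1), then $y\in I$ by lowerness from $y\leq x$) one reaches a contradiction, which is the only place all three axioms (A1)--(A3) are genuinely used together.
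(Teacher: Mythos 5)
Your argument is correct and is essentially the paper's own route: the paper likewise takes the principal upper set $I_x=\{y\in X\mid y\geq x\}$, observes $x\in J(I_x)$, and propagates the property $J(-)\subseteq Y$ from $I_0$ to $I_x$ through the intermediate $I_0\cap I_x\in\I_X$ using Proposition \ref{muttrans} together with the ``if and only if'' in condition (1). Your sublemma ruling out the degenerate case $I\cap I'=\emptyset$, $I\cup I'=X$ is redundant, since $I\cap I'\in\I_X$ always holds (this is the lattice property of $\I_X$ the paper records earlier; non-emptiness of the intersection follows from (A3) by the same two-case argument you give), but including it does no harm.
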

\begin{proof}
Take $x\in X$ and put $I:=\{y\in X\mid y\geq x\}\in\I_X$. By $I_0\cap I\in\I_X$ and Proposition \ref{muttrans}, we can conclude that $x\in J(I)\subseteq Y$ holds.
\end{proof}

\section{Preliminaries on toric singularities}

Let $N\cong\mathbb{Z}^{d+1}$ be a lattice of rank $d+1$ and $M:=\Hom_\mathbb{Z}(N,\mathbb{Z})$ the dual lattice of $N$. Let $\sigma\subseteq N_\mathbb{R}:=N\otimes_\mathbb{Z}\mathbb{R}$ be a strongly convex non-degenerate rational polyhedral cone. Put $R:=k[\sigma^\vee\cap M]$ be the toric singularity defined by $\sigma$. Let $\{\rho_i\}_{i=1}^n$ denote the set of the rays of $\sigma$ and take the minimal generator $v_i\in\rho_i\cap N$. Then we have a natural group homomorphism
\[\phi\colon\mathbb{Z}^n\to N\]
with finite cokernel which sends the $i$-th unit vector to $v_i\in N$. Define an abelian group $G$ by the short exact sequence
\[0\to M\xrightarrow{\phi^*}(\mathbb{Z}^n)^*\to G\to 0.\]
For $1\leq i\leq n$, we write $\vec{x}_i\in G$ for the image of the $i$-th unit vector of $(\mathbb{Z}^n)^*$. Then the polynomial ring $S:=k[x_1,\cdots, x_n]$ can be viewed as a $G$-graded $k$-algebra by $\deg x_i:=\vec{x}_i$. Now we have a $k$-algebra homomorphism $R\to S$ induced by $\phi$. Then it is easy to see that this homomorphism gives an isomorphism
\[R\xrightarrow[\cong]{}S_0.\]
Thus for $\vec{g}\in G$, we can view $S_{\vec{g}}$ as an $R$-module.

\subsection{Divisorial modules}

It is well-known that $G$ is isomorphic to the divisor class group $\Cl(R)$ of $R$. If we view $\Cl(R)$ as the group of isoclasses of divisorial $R$-modules, then we have the following explicit description of $G\cong\Cl(R)$.

\begin{Thm}\label{clgptoric}
The group homomorphism
\[G\to\Cl(R);\vec{g}\mapsto[S_{\vec{g}}]\]
is an isomorphism.
\end{Thm}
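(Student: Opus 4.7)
The plan is to reduce the statement to the classical toric description of $\Cl(R)$ and match it with the defining short exact sequence of $G$. First, I would recall the standard computation of the divisor class group of an affine toric variety: the torus-invariant Weil divisors form the free abelian group $\bigoplus_{i=1}^n\mathbb{Z}D_i$ on the prime divisors $D_i$ corresponding to the rays $\rho_i$, every Weil divisor on $\Spec R$ is linearly equivalent to a torus-invariant one, and for $m\in M$ one has $\Div(\chi^m)=\sum_i\langle m,v_i\rangle D_i$. Since $\phi$ has finite cokernel, the $v_i$ span $N_\mathbb{R}$, so $\phi^*\colon m\mapsto(\langle m,v_i\rangle)_i$ is injective, yielding
\[
0\to M\to\bigoplus_{i=1}^n\mathbb{Z}D_i\to\Cl(R)\to0.
\]
Under the identification $D_i\leftrightarrow\vec{x_i}$, this agrees with the defining sequence $0\to M\xrightarrow{\phi^*}(\mathbb{Z}^n)^*\to G\to0$, giving an abstract isomorphism $G\xrightarrow{\cong}\Cl(R)$ sending $\vec{x_i}$ to $[D_i]$.

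Next, I would show that this abstract isomorphism coincides with $\vec{g}\mapsto[S_{\vec{g}}]$ by identifying $S_{\vec{g}}$ with the divisorial module $R(D)$ of a torus-invariant representative $D$ of $\vec{g}$. Given $\vec{g}\in G$, pick a lift $(a_i)\in(\mathbb{Z}^n)^*$, so $D:=\sum_ia_iD_i$ has class $\vec{g}$ and
\[
R(D)=\bigoplus_{m\in M,\ \langle m,v_i\rangle\geq-a_i\ \forall i}k\,\chi^m.
\]
On the other hand, $S_{\vec{g}}$ has $k$-basis $\{x^b:b_i\geq0,\ \sum_ib_i\vec{x_i}=\vec{g}\}$. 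Writing $b_i=a_i+c_i$, the relation $\sum c_i\vec{x_i}=0$ in $G$ forces $(c_i)=\phi^*(m)$ for a unique $m\in M$, and $b_i\geq0$ becomes $\langle m,v_i\rangle\geq-a_i$. The correspondence $x^b\leftrightarrow\chi^m$ is $R$-linear because under this dictionary multiplication by $x^{\phi^*(m')}\in S_0=R$ (with $m'\in\sigma^\vee\cap M$) corresponds to multiplication by $\chi^{m'}$ inside $\Frac R$. Hence $S_{\vec{g}}\cong R(D)$ as $R$-modules, so $S_{\vec{g}}$ is divisorial of class $\vec{g}$, and the map in the statement agrees with the isomorphism constructed in the first step.

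For group-homomorphism additivity one can also argue directly: multiplication of monomials gives $S_{\vec{g}}\cdot S_{\vec{h}}\subseteq S_{\vec{g}+\vec{h}}$ inside $S$, and after reflexive closure this becomes an equality, so $[S_{\vec{g}}]+[S_{\vec{h}}]=[S_{\vec{g}+\vec{h}}]$. The main technical obstacle is the careful bookkeeping in the second paragraph: one must verify that the basis correspondence $x^b\leftrightarrow\chi^m$ is $R$-linear and independent of the chosen lift (different lifts differ by $\phi^*(m_0)$, corresponding to multiplication by $\chi^{m_0}\in\Frac R$, which implements precisely the principal linear equivalence needed for well-definedness at the level of divisor classes).
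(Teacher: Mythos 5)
Your argument is correct, but it follows a genuinely different route from the paper. You reduce the statement to the classical toric computation of the class group (the exact sequence $0\to M\to\bigoplus_i\mathbb{Z}D_i\to\Cl(R)\to0$ coming from $\Div(\chi^m)=\sum_i\langle m,v_i\rangle D_i$ and the fact that every class has a torus-invariant representative) and then match $S_{\vec{g}}$ with $R(D)$ by explicit monomial bookkeeping; the paper instead argues purely algebraically: it first proves $\Hom_R(S_{\vec{g}},S_{\vec{h}})\cong S_{\vec{h}-\vec{g}}$ (Proposition \ref{refhom}), upgrades this to an equivalence $(-)_0\colon\refl^GS\to\refl R$, and then deduces surjectivity by lifting a divisorial $R$-module to a $G$-graded reflexive $S$-module, which must be a shift $S(\vec{g})$ because $S$ is a UFD and the generator of a graded principal ideal is homogeneous. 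The trade-off is this: the paper deliberately avoids citing the toric class-group computation because, as it notes, standard references state it over $\mathbb{C}$ or algebraically closed fields, while the conventions here allow an arbitrary field $k$; your route is sound over any $k$, but to be self-contained you would need to verify the inputs in that generality (that $\Cl(k[M])=0$ and the units of $k[M]$ are $k^\times\times M$, so the kernel of $\bigoplus_i\mathbb{Z}D_i\to\Cl(R)$ is exactly $\phi^*(M)$; that $\operatorname{ord}_{D_i}(\chi^m)=\langle m,v_i\rangle$; and the dictionary between Weil divisor classes and isoclasses of divisorial modules for normal domains), all of which is routine but not free. What your approach buys is the concrete identification $S_{\vec{g}}\cong R(\sum_ia_iD_i)$ and geometric transparency; what the paper's approach buys is that Proposition \ref{refhom} and the equivalence $\refl^GS\simeq\refl R$ are needed later anyway (for the Hom computations behind Theorem \ref{divCM} and the NCCR arguments), so the isomorphism comes essentially for free from machinery the paper must build regardless. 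Two small points in your write-up: the claim that the reflexive closure of $S_{\vec{g}}\cdot S_{\vec{h}}$ equals $S_{\vec{g}+\vec{h}}$ deserves a one-line check (compare localizations at height-one primes), though it is redundant once you know the map coincides with the abstract isomorphism; and you should state explicitly that $R(D)\cong R(D')$ as $R$-modules iff $D\sim D'$, since that bridge is what lets you transfer the class-group isomorphism to the module-theoretic formulation used in the statement.
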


Since we cannot find a good reference dealing with arbitrary field $k$, we include a complete proof for the convenience of the reader.

\begin{Prop}\label{refhom}
For $\vec{g},\vec{h}\in G$, the natural $R$-module homomorphism
\[S_{\vec{h}-\vec{g}}\to\Hom_R(S_{\vec{g}},S_{\vec{h}});s\mapsto s\cdot-\]
is an isomorphism.
\end{Prop}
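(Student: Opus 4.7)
The plan is to check well-definedness, injectivity, and surjectivity separately, with essentially all the difficulty concentrated in surjectivity. Well-definedness and $R$-linearity are immediate, since multiplication by an element of $S_{\vec{h}-\vec{g}}$ sends $S_{\vec{g}}$ into $S_{\vec{g}+(\vec{h}-\vec{g})}=S_{\vec{h}}$. Before tackling injectivity, I would record the useful fact that $S_{\vec{g}}\neq 0$ for every $\vec{g}\in G$: lifting $\vec{g}$ to some $\vec{a}_{0}\in(\mathbb{Z}^{n})^{*}$ and taking a lattice point $m$ in the interior of the full-dimensional cone $\sigma^{\vee}$ gives $\vec{a}_{0}+N\phi^{*}(m)\in\mathbb{Z}_{\geq 0}^{n}$ for $N$ large enough, yielding a nonzero monomial in $S_{\vec{g}}$. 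Injectivity is then immediate: if $s\in S_{\vec{h}-\vec{g}}$ annihilates $S_{\vec{g}}$ it annihilates a nonzero element of the domain $S$, so $s=0$.

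For surjectivity, given $\psi\colon S_{\vec{g}}\to S_{\vec{h}}$, I would pick any monomial $x^{\vec{a}}\in S_{\vec{g}}$ and tentatively put $s:=\psi(x^{\vec{a}})\cdot x^{-\vec{a}}$ in the Laurent ring $T:=S[x_{1}^{-1},\dots,x_{n}^{-1}]$. For independence from the choice of $\vec{a}$, observe that for any other monomial $x^{\vec{a}'}\in S_{\vec{g}}$ the difference $\vec{a}-\vec{a}'$ lies in $\phi^{*}(M)$; since $\sigma^{\vee}\cap M$ generates $M$ as a group by full-dimensionality of $\sigma^{\vee}$, the monoid $\phi^{*}(\sigma^{\vee}\cap M)\subseteq\mathbb{Z}_{\geq 0}^{n}$ generates $\phi^{*}(M)$ as a group, so one can write $\vec{a}-\vec{a}'=\vec{m}_{2}-\vec{m}_{1}$ with $x^{\vec{m}_{i}}\in R$. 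Then $R$-linearity of $\psi$ applied to the equality $x^{\vec{a}+\vec{m}_{1}}=x^{\vec{a}'+\vec{m}_{2}}$ yields $x^{\vec{m}_{1}}\psi(x^{\vec{a}})=x^{\vec{m}_{2}}\psi(x^{\vec{a}'})$, which after division by $x^{\vec{a}+\vec{m}_{1}}$ gives $\psi(x^{\vec{a}})/x^{\vec{a}}=\psi(x^{\vec{a}'})/x^{\vec{a}'}$. By $k$-linearity, $\psi(t)=st$ for all $t\in S_{\vec{g}}$.

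The main obstacle is upgrading $s\in T_{\vec{h}-\vec{g}}$ to $s\in S_{\vec{h}-\vec{g}}$. Since $S$ is a UFD, a Laurent polynomial in $T$ belongs to $S$ precisely when its $x_{i}$-adic valuation is nonnegative for every $i$. Using the formula $s=\psi(x^{\vec{a}})/x^{\vec{a}}$, it would suffice to find, for each $i$, a monomial $x^{\vec{a}}\in S_{\vec{g}}$ with $\vec{a}_{i}=0$, since then $v_{i}(s)=v_{i}(\psi(x^{\vec{a}}))-\vec{a}_{i}\geq 0$. This auxiliary claim---that the submonoid of $G$ generated by $\{\vec{x_{j}}\}_{j\neq i}$ exhausts $G$---is where I expect to spend most of my effort. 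My plan is to pass to the quotient $N_{\mathbb{R}}/\mathbb{R} v_{i}$: strong convexity of $\sigma$ forbids any positive linear dependence among $v_{1},\dots,v_{n}$, which in turn forbids such a dependence among the images $\bar{v}_{j}$ ($j\neq i$), so the projected cone is again strongly convex and each $\bar{v}_{k}$ is a nonzero element of it. Dualizing, for each $k\neq i$ there exists $m\in v_{i}^{\perp}\cap M$ with $\langle v_{l},m\rangle\geq 0$ for $l\neq i$ and $\langle v_{k},m\rangle\geq 1$, and this $m$ yields a nonnegative-integer expression of $-\vec{x_{k}}$ in terms of $\{\vec{x_{j}}\}_{j\neq i}$, establishing the claim.
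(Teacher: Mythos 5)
Your overall strategy is viable and, in substance, close to the paper's: both proofs reduce surjectivity to the combinatorial claim that for each $i$ and each $\vec{g}\in G$ there is $\vec{a}\in\mathbb{Z}_{\geq0}^{n}$ with $a_i=0$ and $\sum_{j}a_j\vec{x_j}=\vec{g}$ (the paper packages the analytic part as a coprime fraction $s'/s$ and shows $s$ has no prime divisor, while you use Laurent polynomials and $x_i$-adic valuations; either framing works). Your well-definedness, nonvanishing of $S_{\vec{g}}$, injectivity, independence-of-$\vec{a}$, and valuation steps are all correct. The gap is in the auxiliary claim. What your dualizing argument actually produces is, for each $k\neq i$, a relation $\sum_{l\neq i}m(v_l)\vec{x_l}=0$ with $m(v_l)\geq0$ and $m(v_k)\geq1$, hence that $-\vec{x_k}$ lies in the submonoid $P$ generated by $\{\vec{x_j}\}_{j\neq i}$. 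This shows $P$ is a subgroup of $G$, but not that $P=G$: you still need that $\{\vec{x_j}\}_{j\neq i}$ generate $G$ as a group, and this is false without an extra hypothesis, since $G/\langle\vec{x_j}\colon j\neq i\rangle\cong\mathbb{Z}/c\mathbb{Z}$ where $c\mathbb{Z}=\{m(v_i)\mid m\in M\}$. So the needed input is exactly the existence of $m\in M$ with $m(v_i)=1$, i.e. that $v_i$ is the \emph{minimal} (primitive) generator of its ray --- an ingredient the paper invokes explicitly and which appears nowhere in your sketch. Without it, "the submonoid generated by $\{\vec{x_j}\}_{j\neq i}$ exhausts $G$" does not follow from what you proved.

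A smaller imprecision: strong convexity of $\sigma$ alone does not forbid a nonnegative dependence among the images $\bar{v}_j$, because the case $\sum_{j\neq i}\lambda_j v_j=\mu v_i$ with $\mu>0$ is not excluded by strong convexity; it is excluded by $\rho_i$ being a one-dimensional face (extremal ray) of $\sigma$, e.g. via a supporting functional $m'\in\sigma^{\vee}\cap M$ with $m'(v_i)=0$ and $m'(v_j)>0$ for all $j\neq i$. That single functional (the one the paper uses) in fact gives your statement for all $k\neq i$ simultaneously, and combined with the primitivity input above (group generation, then adding a large multiple of $\sum_{j\neq i}m'(v_j)\vec{x_j}=0$ to clear negative coefficients) your argument closes correctly.
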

\begin{proof}
The injectivity is obvious. We prove the surjectivity. Take an $R$-module homomorphism $0\neq f\colon S_{\vec{g}}\to S_{\vec{h}}$. Observe that by multiplying elements of $S_{-\vec{g}}\backslash\{0\}$ and $S_{-\vec{h}}\backslash\{0\}$, $S_{\vec{g}}$ and $S_{\vec{h}}$ are isomorphic to ideals of $R$. Thus we can conclude that there exist homogeneous elements $s,s'\in S\backslash\{0\}$ such that $f=\dfrac{s'}{s}\cdot-$ holds. Therefore it is enough to show that $\dfrac{s'}{s}\in S$ holds.

We may assume that $s,s'\in S\backslash\{0\}$ are coprime to each other. Fix $1\leq i_0\leq n$. Since $v_{i_0}\in\rho_{i_0}\cap N$ is the minimal generator, there exists $m\in M$ satisfying $m(v_{i_0})=1$. This means $G=\sum_{i\neq i_0}\mathbb{Z}\vec{x}_i$. On the other hand, since $\{\rho_i\}_{i=1}^n$ is the set of the rays of $\sigma$, there exists $m'\in M$ satisfying
$m'(v_i)\left\{
\begin{array}{ll}
=0 & (i=i_0)\\
>0 & (i\neq i_0)
\end{array}
\right.$. This means $\sum_{i\neq i_0}m'(v_i)\vec{x}_i=0$ holds. Combining these two facts, we can conclude that there exists $a_i\in\mathbb{Z}_{\geq0}$ for $i\neq i_0$ such that $\sum_{i\neq i_0}a_i\vec{x}_i=\vec{g}$ holds. Since $f(\prod_{i\neq i_0}x_i^{a_i})\in S$, we have
\[s'\prod_{i\neq i_0}x_i^{a_i}\in sS.\]
Thus if $s$ has a prime divisor, then it should be an associate of one of the elements of $\{x_i\}_{i\neq i_0}$. Since this holds for any $1\leq i_0\leq n$, we can conclude that $s$ has no prime divisor.
\end{proof}

\begin{Prop}
The functor
\[(-)_0\colon\refl^G\!S\to\refl R\]
is a categorical equivalence.
\end{Prop}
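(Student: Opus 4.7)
The plan is to construct an explicit quasi-inverse. For $N \in \refl R$, put
\[F(N) := \bigoplus_{\vec{g} \in G} \Hom_R(S_{-\vec{g}}, N),\]
graded by the displayed decomposition and endowed with the $S$-action $(s \cdot \varphi)(t) := \varphi(st)$ for $s \in S_{\vec{h}}$, $\varphi \in F(N)_{\vec{g}}$, $t \in S_{-\vec{g}-\vec{h}}$. Each graded piece is reflexive over $R$, being $\Hom_R$ from a divisorial into a reflexive over a normal domain, and reflexivity of $F(N)$ as a $G$-graded $S$-module follows from an $S_2$-check at codimension-two primes of $S$, using the regularity of $S$.

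One composition is the identity on the nose: $F(N)_0 = \Hom_R(R, N) \cong N$ canonically in $N$. For the other direction, I would introduce the unit $\eta_M \colon M \to F(M_0)$ sending a homogeneous $m \in M_{\vec{g}}$ to the $R$-linear map $S_{-\vec{g}} \to M_0$, $s \mapsto sm$; this is $G$-graded and $S$-linear by a direct unwinding.

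The main obstacle is proving that $\eta_M$ is an isomorphism, equivalently that the natural map
\[M_{\vec{g}} \longrightarrow \Hom_R(S_{-\vec{g}}, M_0)\]
is an isomorphism of $R$-modules for every $\vec{g} \in G$ and every $M \in \refl^G S$; this extends Proposition \ref{refhom} from the case $M = S_{\vec{h}}$ to arbitrary $M$. Both sides being reflexive over the normal domain $R$, it suffices to check after localising at every height-one prime $\mathfrak{p} \subseteq R$, where $R_\mathfrak{p}$ is a DVR and every finitely generated torsion-free module is free. At such $\mathfrak{p}$ both $S_{-\vec{g},\mathfrak{p}}$ and $S_{\vec{g},\mathfrak{p}}$ are rank-one free over $R_\mathfrak{p}$, and because the divisorial product $S_{-\vec{g}} \star S_{\vec{g}} = S_0 = R$ is reflexive it coincides with the ordinary ideal product after localising at the DVR $R_\mathfrak{p}$. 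Hence one can pick generators $s \in S_{-\vec{g},\mathfrak{p}}$ and $s' \in S_{\vec{g},\mathfrak{p}}$ with $ss' = 1$ in $R_\mathfrak{p}$; multiplication by $s$ and $s'$ on $M_\mathfrak{p}$ then provides mutually inverse $R_\mathfrak{p}$-isomorphisms $M_{\vec{g},\mathfrak{p}} \rightleftarrows M_{0,\mathfrak{p}}$. Under the identification $\Hom_{R_\mathfrak{p}}(S_{-\vec{g},\mathfrak{p}}, M_{0,\mathfrak{p}}) \cong M_{0,\mathfrak{p}}$ given by evaluation at $s$, the localised map $(\eta_M)_\mathfrak{p}$ becomes this multiplication-by-$s$ isomorphism, completing the codimension-one verification. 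Reflexivity then lifts this local statement to the desired global isomorphism.
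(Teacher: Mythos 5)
Your overall strategy -- an explicit quasi-inverse $F(N)=\bigoplus_{\vec{g}}\Hom_R(S_{-\vec{g}},N)$ together with a reduction of the unit map to height-one primes of $R$ -- is a genuinely different route from the paper (which proves full faithfulness via graded projective (co)presentations and Proposition \ref{refhom}), and the DVR computation at a height-one prime is essentially correct. However, two load-bearing steps are asserted rather than proved. First, your passage from the codimension-one statement to the global isomorphism needs the left-hand side $M_{\vec{g}}$ to be a finitely generated \emph{reflexive} $R$-module: surjectivity of $\eta_M$ is obtained from $M_{\vec{g}}=\bigcap_{\height\mathfrak{p}=1}(M_{\vec{g}})_{\mathfrak{p}}$, which is exactly reflexivity. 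But this is precisely the well-definedness half of the proposition (applied to the shift $M(\vec{g})$), and it appears nowhere in your argument; the paper obtains it by restricting an exact sequence $0\to M\to P\to Q$ with $P,Q\in\proj^GS$ to a single degree and using that kernels of maps between reflexive $R$-modules are reflexive. Without this, the final ``reflexivity lifts the local statement'' step does not go through.

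Second, for $F$ to be a quasi-inverse you must show $F(N)\in\refl^GS$, i.e.\ that $F(N)$ is finitely generated and reflexive over $S$. Finite generation is never addressed (the sum runs over the possibly infinite group $G$), and ``an $S_2$-check at codimension-two primes of $S$'' is a restatement of what has to be verified, not an argument: since $S$ is not module-finite over $R$ when $G$ is infinite, reflexivity of the graded pieces over $R$ gives no direct control of depth over $S$. The natural repair is to apply the left exact functor $F$ to $0\to N\to R^{\oplus m}\to R^{\oplus n}$ and to identify $F(R)\cong S$ as graded $S$-modules via Proposition \ref{refhom}, exhibiting $F(N)$ as the kernel of a graded map $S^{\oplus m}\to S^{\oplus n}$ -- which is exactly the paper's essential-surjectivity argument, so this part of your proof is not really an alternative once completed. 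A smaller caution: the claim that the divisorial product of $S_{-\vec{g}}$ and $S_{\vec{g}}$ is $R$ should not be justified by divisor-class considerations, since Theorem \ref{clgptoric} is proved \emph{after} (and using) this proposition; instead, localize Proposition \ref{refhom} at $\mathfrak{p}$: surjectivity of $S_{-\vec{g}}\to\Hom_R(S_{\vec{g}},R)$ produces $s\in(S_{-\vec{g}})_{\mathfrak{p}}$ with $ss'=1$ for a generator $s'$ of the free rank-one module $(S_{\vec{g}})_{\mathfrak{p}}$, which is all your argument needs.
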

\begin{proof}
First, we see the well-definedness. By Proposition \ref{refhom}, for any $\vec{g}\in G$, the natural homomorphism
\[S_{\vec{g}}\to\Hom_R(\Hom_R(S_{\vec{g}},R),R)\]
is an isomorphism. Thus $S(\vec{g})_0=S_{\vec{g}}\in\refl R$ holds. Next, for any $Y\in\refl^G\!S$, there exists an exact sequence
\[0\to Y\to P\to Q\]
in $\mod^G\!S$ where $P,Q\in\proj^G\!S$. Thus we have an exact sequence 
\[0\to Y_0\to P_0\to Q_0\]
in $\mod R$. Since $P_0,Q_0\in\refl R$ holds, we obtain $Y_0\in\refl R$.

Next, we show the full faithfulness. Take $X,Y\in\refl^G\!S$. First, assume $X\in\proj^G\!S$. Take an exact sequence $0\to Y\to P\to Q$ with $P,Q\in\proj^G\!S$. Consider the following commutative diagram.
\[
\xymatrix{
0 \ar[r] & \Hom_S^G(X,Y) \ar[r] \ar[d] & \Hom_S^G(X,P) \ar[r] \ar[d] & \Hom_S^G(X,Q) \ar[d]\\
0 \ar[r] & \Hom_R(X_0,Y_0) \ar[r] & \Hom_R(X_0,P_0) \ar[r] & \Hom_R(X_0,Q_0)
}
\]
The two horizontal sequences are exact. By Proposition \ref{refhom}, $(-)_0\colon\proj^G\!S\to\refl R$ is fully faithful. Thus the two vertical arrows on the right are isomorphisms. Therefore the leftmost vertical arrow is an isomorphism. Second, consider arbitrary $X\in\refl^G\!S$. Take a projective presentation $Q'\to P'\to X\to 0$ in $\mod^G\!S$. Consider the following commutative diagram.
\[
\xymatrix{
0 \ar[r] & \Hom_S^G(X,Y) \ar[r] \ar[d] & \Hom_S^G(P',Y) \ar[r] \ar[d] & \Hom_S^G(Q',Y) \ar[d]\\
0 \ar[r] & \Hom_R(X_0,Y_0) \ar[r] & \Hom_R(P'_0,Y_0) \ar[r] & \Hom_R(Q'_0,Y_0)
}
\]
The two horizontal sequences are exact. By the discussion above, the two vertical arrows on the right are isomorphisms. Therefore the leftmost vertical arrow is an isomorphism.

Finally, we show essential surjectivity. Take $M\in\refl R$. Then we have an exact sequence $0\to M\to R^{\oplus m}\xrightarrow{f} R^{\oplus n}$ for some $m,n\geq0$. Then there exists $g\colon S^{\oplus m}\to S^{\oplus n}$ in $\mod^G\!S$ such that $g_0=f$ holds. Then we have $\Ker g\in \refl^G\!S$ and $(\Ker g)_0\cong M$.
\end{proof}

\begin{Rem}
We can show the following stronger condition than the full faithfulness of $(-)_0\colon\refl^G\!S\to\refl R$ in the same way: for $X\in\mod^G\!S$ and $Y\in\refl^G\!S$, the natural map
\[\Hom_S^G(X,Y)\to\Hom_R(X_0,Y_0)\]
is bijective.
\end{Rem}

\begin{proof}[Proof of Theorem \ref{clgptoric}]
By Proposition \ref{refhom}, we have
\[[S_{\vec{h}-\vec{g}}]=[\Hom_R(S_{\vec{g}},S_{\vec{h}})]=[S_{\vec{h}}]-[S_{\vec{g}}].\]
Thus our map is a well-defined group homomorphism. In addition, this map is injective since $S\ncong S(\vec{g})$ holds as $G$-graded $S$-modules for $0\neq g\in G$. In what follows, we prove the surjectivity.

Take a divisorial $R$-module $M$. Then there exists $X\in\refl^G\!S$ with $X_0\cong M$. Put $K:=\Frac R$ and $L:=\{0\neq s\in S\colon\text{homogeneous}\}^{-1}S$. Observe that $L=S\otimes_RK$ holds and it is a $G$-field. Since $(X\otimes_RK)_0\cong K\cong L_0$, we obtain $X\otimes_SL\cong X\otimes_RK\cong L$. Thus as an ungraded $S$-module, $X$ is divisorial. Since $S$ is a unique factorization domain, we have $X\cong S$ as an ungraded $S$-module. Since $\End^G_S(X)\cong\End_R(X)\cong R$,  there exists an injection $X\to S$ in $\mod^G\!S$. Thus we can view $X$ as a homogeneous ideal of $S$. Take a generator $f\in X\subseteq S$ as an ungraded $S$-module. Then for every $\vec{g}\in G$, there exists $s\in S$ such that $f_{\vec{g}}=fs$ holds. By considering the monomial in $f$ of highest degree with respect to a monomial order, we can conclude that $f\in S$ must be homogeneous. Thus there exists $\vec{g}\in G$ such that $X\cong S(\vec{g})$ holds. Therefore we obtain $M\cong S_{\vec{g}}$.
\end{proof}

Finally, we recall the calculation of the local cohomologies of divisorial $R$-modules.

\begin{Prop}\label{lcohtoric}
Let $\mathfrak{m}$ denote the maximal ideal of $R$ consisting of polynomials with no constant terms. Consider the toric \v{C}ech complex.
\[0\to\bigoplus_{\substack{\tau\subseteq\sigma\colon\text{face} \\ \dim\tau=d+1}}S_{\prod_{\rho_i\nsubseteq\tau}x_i}\to\bigoplus_{\substack{\tau\subseteq\sigma\colon\text{face} \\ \dim\tau=d}}S_{\prod_{\rho_i\nsubseteq\tau}x_i}\to\cdots\to\bigoplus_{\substack{\tau\subseteq\sigma\colon\text{face} \\ \dim\tau=0}}S_{\prod_{\rho_i\nsubseteq\tau}x_i}\to0\]
Remark that this complex has a natural $G$-grading. Then for $\vec{g}\in G$, the local cohomology $H^r_{\mathfrak{m}}(S_{\vec{g}})$ can be computed as the degree $\vec{g}$ part of the $r$-th cohomology of this complex.
\end{Prop}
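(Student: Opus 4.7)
The plan is to identify the degree-$\vec{g}$ piece of the displayed complex with the classical Ishida complex applied to the $R$-module $S_{\vec{g}}$, and then invoke the standard result that the Ishida complex computes local cohomology at the irrelevant maximal ideal $\mathfrak{m}$.

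The first step is a local identification of the terms. For each face $\tau\subseteq\sigma$ I would choose $m_\tau\in M$ in the relative interior of $\sigma^\vee\cap\tau^\perp$; then $m_\tau(v_i)=0$ for $v_i\in\tau$ and $m_\tau(v_i)>0$ otherwise, and the coordinate ring of the torus-invariant affine open $U_\tau\subseteq\Spec R$ is $R_\tau=R[x^{-m_\tau}]$. Under the embedding $R=S_0\hookrightarrow S$ the element $x^{m_\tau}$ is the monomial $\prod_{\rho_i\nsubseteq\tau}x_i^{m_\tau(v_i)}$, so inverting it in $S$ is equivalent to inverting each $x_i$ with $\rho_i\nsubseteq\tau$ individually. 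Taking the degree-$\vec{g}$ piece then yields
\[
\bigl(S_{\prod_{\rho_i\nsubseteq\tau}x_i}\bigr)_{\vec{g}}\cong S_{\vec{g}}\otimes_R R_\tau.
\]
The differentials of the displayed complex are the usual signed sums of restriction maps along face inclusions $\tau'\preceq\tau$, and under the identification above they coincide with the standard differentials of the Ishida complex $\mathcal{L}^\bullet(R)\otimes_R S_{\vec{g}}$.

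It then remains to invoke the general fact $H^r(\mathcal{L}^\bullet(R)\otimes_R M)\cong H^r_\mathfrak{m}(M)$ for any $R$-module $M$. To sketch a proof: the monomials $\{x^{m_\tau}\}_\tau$ with $\tau$ running over facets of $\sigma$ have common vanishing locus exactly $\{\mathfrak{m}\}\subseteq\Spec R$ (since $\sigma$ is strongly convex and full-dimensional), so they generate $\mathfrak{m}$ up to radical. Their \v{C}ech complex therefore computes $\mathbb{R}\Gamma_\mathfrak{m}(M)$, and there is a natural morphism from it to the Ishida complex given by grouping subsets $I=\{\tau_1,\ldots,\tau_k\}$ of facets by the face $\bigcap_{i}\tau_i$ they span, using that $R[x^{-m_{\tau_1}},\ldots,x^{-m_{\tau_k}}]=R_{\bigcap_i\tau_i}$.

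The main obstacle is verifying that this grouping morphism is a quasi-isomorphism. This reduces, face by face, to the acyclicity of a certain simplicial subcomplex on the set of facets of $\sigma$ containing a given face $\tau$ (essentially the nerve of the poset of non-empty subsets of facets cutting out $\tau$, which is a cone and hence contractible). This is the classical combinatorial content underlying the Ishida complex; alternatively, the statement $H^r(\mathcal{L}^\bullet(R)\otimes_R M)\cong H^r_\mathfrak{m}(M)$ can be cited as a standard result in toric commutative algebra.
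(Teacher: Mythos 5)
Your proposal is correct and follows essentially the same route as the paper: the paper likewise identifies the degree-$\vec{g}$ part of each term $S_{\prod_{\rho_i\nsubseteq\tau}x_i}$ with $R_{\tau^\perp\cap M}\otimes_R S_{\vec{g}}$, so that the graded piece of the toric \v{C}ech complex becomes the Ishida complex applied to $S_{\vec{g}}$, and then cites the standard comparison with local cohomology (Bruns--Herzog 6.2.5). Your closing sketch of why the Ishida complex computes $H^\bullet_{\mathfrak{m}}$ is just an outline of that cited result, so no new ingredient is needed.
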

\begin{proof}
For a face $\tau\subseteq\sigma$, there exists $m\in M$ with
$m(v_i)\left\{
\begin{array}{ll}
=0 & (\rho_i\subseteq\tau)\\
>0 & (\rho_i\nsubseteq\tau)
\end{array}
\right.$. Thus for $\vec{g}\in G$, we have $(S_{\prod_{\rho_i\nsubseteq\tau}x_i})_{\vec{g}}=R_{\tau^\perp\cap M}\otimes_RS_{\vec{g}}$. Therefore by \cite[6.2.5]{BH}, we get the assertion.
\end{proof}

\subsection{Relations among $G$, $\sigma$ and $R$}

In the proof of Proposition \ref{refhom}, we see that the following two conditions are satisfied.
\begin{enumerate}
\item For each $1\leq i_0\leq n$, we have $\sum_{i\neq i_0}\mathbb{Z}\vec{x}_i=G$.
\item For each $1\leq i_0\leq n$, there exists $m_i>0$ for $i\neq i_0$ such that $\sum_{i\neq i_0} m_i\vec{x}_i=0$ holds.
\end{enumerate}
Conversely, assume that we are given a finitely generated abelian group $G$ and $\vec{x}_1,\cdots,\vec{x}_n\in G$ satisfying (1) and (2). Then we have a natural group epimorphism $(\mathbb{Z}^n)^*\to G$ sending the $i$-th unit vector to $\vec{x}_i$. Define a free abelian group $M$ by the following short exact sequence.
\[0\to M\xrightarrow{\psi}(\mathbb{Z}^n)^*\to G\to0\]
Then $\psi^*\colon\mathbb{Z}^n\to N:=M^*$ defines a strongly convex non-degenerate rational polyhedral cone in $N_\mathbb{R}$.

Next, we consider the Gorenstein property of $R$. It is well-known that $-\sum_{i=1}^n\vec{x}_i\in G\cong\Cl(R)$ is the canonical divisor. Thus $R$ is Gorenstein if and only if $\sum_{i=1}^n\vec{x}_i=0$ holds. This is equivalent to the existence of $m\in M$ such that $m(v_i)=1$ holds for every $1\leq i\leq n$. In this case, the intersection $\sigma\cap\{x\in N_\mathbb{R}\mid m(x)=1\}$ is a $d$-dimensional convex lattice polytope whose vertex set is $\{v_i\}_{i=1}^n$. Conversely, for a given $d$-dimensional convex lattice polytope $P$, we can define a strongly convex non-degenerate rational polyhedral cone $\sigma\subseteq N_\mathbb{R}$ so that the vertex set of $P$ becomes the minimal generators of the rays of $\sigma$ and this defines a Gorenstein $R$. In this situation, we can restate Proposition \ref{lcohtoric}. Observe that the faces of $P$ correspond bijectively to those of $\sigma$ of positive dimension.

\begin{Prop}\label{lcohGortoric}
Let $P$ be a $d$-dimensional convex lattice polytope and $R$ be the Gorenstein toric ring defined as above. For $c=(c_i)_{i=1}^n\in\mathbb{Z}^n$, put
\[\Delta_c:=\{I\subseteq\{1,\cdots,n\}\mid\Conv\{v_i\}_{i\in I}\text{ is a face of }P\text{ and }c_i\geq0\text{ holds for all }i\in I\}\]
and define a subspace $X_c\subseteq P$ as
\[X_c:=\bigcup_{I\in\Delta_c}\Conv\{v_i\}_{i\in I}.\]
For $\vec{g}\in G$, we have
\[H^r_{\mathfrak{m}}(S_{\vec{g}})\cong\bigoplus_{\substack{c\in\mathbb{Z}^n \\ \sum_ic_i\vec{x}_i=\vec{g}}}\tilde{H}_{d-r}(X_c;k),\]
where $\tilde{H}_{d-r}(X_c;k)$ denotes the $(d-r)$-th reduced singular homology of $X_c$ with coefficients in $k$.
\end{Prop}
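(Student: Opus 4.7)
The plan is to apply Proposition~\ref{lcohtoric} and then to identify, summand by summand, the degree-$\vec{g}$ part of the toric \v{C}ech complex with the augmented polytopal chain complexes of the subspaces $X_a\subseteq P$. For a face $\tau\subseteq\sigma$, the localisation $S_{\prod_{\rho_i\nsubseteq\tau}x_i}$ has a $k$-basis of Laurent monomials $\prod_i x_i^{a_i}$ with $a_i\in\mathbb{Z}$ for $\rho_i\nsubseteq\tau$ and $a_i\geq 0$ for $\rho_i\subseteq\tau$. Hence the degree-$\vec{g}$ part of the term in position $r$ of the \v{C}ech complex (corresponding to $\dim\tau=d+1-r$) has a $k$-basis indexed by pairs $(a,\tau)$ with $\sum_i a_i\vec{x_i}=\vec{g}$, $\dim\tau=d+1-r$, and $a_i\geq 0$ whenever $\rho_i\subseteq\tau$. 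Since each \v{C}ech differential is induced by inclusions of localisations, it respects the exponent vector $a$; thus the degree-$\vec{g}$ \v{C}ech complex splits as $\bigoplus_a C^\bullet(a)$, indexed by $a\in\mathbb{Z}^n$ with $\sum_i a_i\vec{x_i}=\vec{g}$, where $C^r(a)$ is spanned by faces $\tau$ of $\sigma$ satisfying $\dim\tau=d+1-r$ and $a_i\geq 0$ for all $\rho_i\subseteq\tau$.

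Next, I will use the Gorenstein hypothesis to translate this combinatorics to the polytope $P$. Positive-dimensional faces of $\sigma$ correspond bijectively to faces of $P$ via $\tau\mapsto\Conv\{v_i\mid\rho_i\subseteq\tau\}$ (shifting dimension by one), and the face $\{0\}$ corresponds to the empty face of dimension $-1$. Under this bijection, the basis of $C^r(a)$ becomes exactly the set of faces of $P$ of dimension $d-r$ that are contained in $X_a$ (i.e.\ whose vertex set lies in $\Delta_a$), together with the empty face when $r=d+1$. Moreover, the \v{C}ech differential, being the alternating sum of inclusions along codimension-one face relations of $\sigma$, translates into the usual (augmented) cellular boundary of the polytopal CW-complex $X_a$, once a global orientation of each face is fixed to determine signs. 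Therefore $C^\bullet(a)$ is identified, after the reindexing $r\leftrightarrow d-r$, with the augmented polytopal chain complex of $X_a$.

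Since $X_a$ is a regular CW-complex, its cellular homology coincides with its singular homology, and augmentation by the empty face yields reduced singular homology, so $H^r(C^\bullet(a))\cong\tilde{H}_{d-r}(X_a;k)$. Combining this with the decomposition above and Proposition~\ref{lcohtoric} gives the desired formula. The main obstacle I anticipate is the sign-matching between the \v{C}ech differential and the cellular boundary: after fixing a linear order on the rays and orienting each face of $P$ accordingly, this will be a routine but slightly finicky verification, and it does not affect the resulting identification of the $k$-vector spaces in question.
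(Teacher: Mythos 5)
Your argument is correct and is essentially the paper's own (implicit) proof: the paper treats Proposition \ref{lcohGortoric} as a restatement of Proposition \ref{lcohtoric} via the bijection between faces of $\sigma$ and faces of $P$ (with $\{0\}$ giving the empty face), which is exactly the monomial-degree decomposition and cellular-chain-complex identification you spell out. The only points needing care, which you correctly flag, are the sign-matching with the cellular boundary and the convention $\tilde{H}_{-1}(\emptyset;k)\cong k$ coming from the augmentation term at $r=d+1$.
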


We introduce the following lemma here which will be used later.

\begin{Lem}\label{torscont}
Take $1\leq i_0\leq n$ and assume $\vec{x}_{i_0}\in G$ is torsion. For any $c=(c_i)_{i=1}^n\in\mathbb{Z}^n$ with $c_{i_0}\geq0$, $X_c$ is contractible.
\end{Lem}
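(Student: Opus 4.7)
The plan is to exhibit $X_a$ as a star-shaped subspace centered at $v_{i_0}$; contractibility will then follow from the straight-line homotopy. The key geometric input is that the torsion-ness of $\vec{x_{i_0}}$ forces $P$ to be a pyramid with apex $v_{i_0}$.

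First, since $\vec{x_{i_0}}\in G=(\mathbb{Z}^n)^*/\psi(M)$ is torsion, some positive multiple of the $i_0$-th standard basis vector of $(\mathbb{Z}^n)^*$ lies in $\psi(M)$, which (pairing with the $v_i$'s) yields an $m\in M$ satisfying $m(v_{i_0})>0$ and $m(v_i)=0$ for $i\neq i_0$. Combined with the Gorenstein assumption ($m_P\in M$ with $m_P(v_i)=1$ for all $i$), the restriction of $m$ to $P$ is a nonconstant affine function, nonnegative on $P$ and vanishing precisely on $F:=\Conv\{v_i\mid i\neq i_0\}$. Hence $F$ is a facet of $P$ containing all vertices except $v_{i_0}$, so $P$ is a pyramid with apex $v_{i_0}$ over the base $F$. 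From the standard face lattice of a pyramid, every face of $P$ is either a face of $F$, or of the form $\Conv(G\cup\{v_{i_0}\})$ for some face $G$ of $F$ (including $G=\emptyset$).

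Next, assuming $a_{i_0}\geq 0$, I would derive the closure property that $I\in\Delta_a$ implies $I\cup\{i_0\}\in\Delta_a$: the sign condition at $i_0$ is automatic, and when $i_0\notin I$, the face $\Conv\{v_i\}_{i\in I}$ of $P$ is actually a face of $F$ (it does not contain $v_{i_0}$), so coning off by $v_{i_0}$ produces the face $\Conv\{v_i\}_{i\in I\cup\{i_0\}}$ of $P$. Note also $\{i_0\}\in\Delta_a$, so $v_{i_0}\in X_a$. Now for any $x\in X_a$, pick $I\in\Delta_a$ with $x\in\Conv\{v_i\}_{i\in I}$; then $I\cup\{i_0\}\in\Delta_a$, and the entire segment from $x$ to $v_{i_0}$ lies in $\Conv\{v_i\}_{i\in I\cup\{i_0\}}\subseteq X_a$. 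Hence $X_a$ is star-shaped at $v_{i_0}$, and thus contractible. I expect the main subtlety to lie in cleanly justifying the pyramid face-lattice description, specifically that $\Conv(G\cup\{v_{i_0}\})$ is always a face of $P$ for $G$ a face of $F$; once this is in place, the rest of the argument is essentially formal.
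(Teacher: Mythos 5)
Your proof is correct, and its skeleton coincides with the paper's: both arguments show that $\Delta_a$ is closed under adjoining $i_0$ (so that, since $\{i_0\}\in\Delta_a$, the space $X_a$ is star-shaped with center $v_{i_0}$) and then contract along straight lines to $v_{i_0}$. The difference lies in how the coning step is justified. You extract from the torsion hypothesis an $m\in M$ with $m(v_{i_0})>0$ and $m(v_i)=0$ for $i\neq i_0$, conclude that $P$ is a pyramid with apex $v_{i_0}$ over the facet $F=\Conv\{v_i\}_{i\neq i_0}$, and then invoke the face lattice of a pyramid. The paper instead stays at the level of relations among the weights and fills exactly the point you flag as the main subtlety by a direct construction: if $(s_i)$ witnesses that $\Conv\{v_i\}_{i\in I}$ is a face (i.e.\ $s_i=0$ for $i\in I$, $s_i>0$ for $i\notin I$, and $\sum_i s_i\vec{x_i}=0$), and $b\geq1$ satisfies $b\vec{x_{i_0}}=0$, then setting $t_i:=bs_i$ for $i\neq i_0$ and $t_{i_0}:=0$ witnesses that $\Conv\{v_i\}_{i\in I\sqcup\{i_0\}}$ is a face. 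This $(t_i)$ is precisely the functional you would obtain by combining your $m$ with the supporting functional of the face inside $F$, so the two arguments are essentially equivalent; yours buys a clearer geometric picture (the torsion ray is the apex of a pyramid over the remaining vertices) at the cost of citing the standard pyramid face-lattice fact without proof, while the paper's version is self-contained and uses nothing beyond the characterization of faces by supporting relations.
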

\begin{proof}
Take $I\in\Delta_c$ with $i_0\notin I$. Then there exists $s_i\in\mathbb{Z}$ for $1\leq i\leq n$ such that $s_i\left\{
\begin{array}{ll}
=0 & (i\in I)\\
>0 & (i\notin I)
\end{array}
\right.$ and $\sum_{i=1}^ns_i\vec{x}_i=0$ hold. Take $b\geq1$ with $b\vec{x}_{i_0}=0$. Put $t_i:=\left\{
\begin{array}{ll}
bs_i & (i\neq i_0)\\
0 & (i=i_0)
\end{array}
\right.$. Then we have $t_i\left\{
\begin{array}{ll}
=0 & (i\in I\sqcup{\{i_0\}})\\
>0 & (i\notin I\sqcup{\{i_0\}})
\end{array}
\right.$ and $\sum_{i=1}^nt_i\vec{x}_i=0$. Thus $I\sqcup{\{i_0\}}\in\Delta_c$ holds. Therefore $X_c$ is homotopic to the point $\{v_{i_0}\}$.
\end{proof}

Finally, we discuss when $\rk\Cl(R)=1$ holds. Let $G$ be a finitely generated abelian group of rank one and put $\pi\colon G\to G/G_{\rm{tors}}\cong\mathbb{Z}$. Assume we are given $\vec{x}_1,\cdots,\vec{x}_n\in G$. Then (G2) is equivalent to both
\[\#\{1\leq i\leq n\mid\pi(\vec{x}_i)>0\}\geq2\text{ and }\#\{1\leq i\leq n\mid\pi(\vec{x}_i)<0\}\geq2\]
hold. Remark that $n=d+2$ holds in this case.

\subsection{Pyramids}

In this subsection, we give a remark on pyramids.

\begin{Def}
Let $P\subseteq\mathbb{R}^d$ be a $d$-dimensional convex lattice polytope with vertex set $P_0=\{v_i\}_{i=1}^n$. $P$ is called a {\it pyramid} if there exists an affine hyperplane $H\subseteq\mathbb{R}^d$ and a vertex $v_{i_0}\in P_0$ such that
\[v_i\left\{
\begin{array}{ll}
\in H & (i\neq i_0)\\
\notin H & (i=i_0)
\end{array}
\right.\]
holds for $1\le i\le n$. Roughly speaking, this condition is equivalent to saying that $P$ is obtained by adding a vertex to a $(d-1)$-dimensional convex lattice polytope along a new dimension.
\end{Def}

The following proposition follows from the definition immediately, which gives a criterion for $P$ to be a pyramid in terms of our group $G$.

\begin{Prop}\label{pyra}
For a lattice polytope $P$, the following conditions are equivalent.
\begin{enumerate}
\item $P$ is a pyramid.
\item There exists $1\le i\le n$ such that $\vec{x}_i\in G$ is torsion.
\end{enumerate}
\end{Prop}

\section{Preliminaries on NCCRs}

In this section, we recall a definition and basic properties of NCCRs. The notion of NCCRs was introduced in \cite{VdB04a} as a virtual space of crepant resolutions. For connections with algebraic geometry, see \cite[6.3.1]{VdB04a}.

\begin{Def}\cite[4.1]{IW}\cite[4.1]{VdB04a}
Let $R$ be a Gorenstein normal domain with $\dim R\geq2$ and $M$ a reflexive $R$-module. Put $\Gamma:=\End_R(M)$.
\begin{enumerate}
\item $M$ is called {\it modifying} if $\Gamma$ is maximal Cohen--Macaulay as an $R$-module.
\item $\Gamma$ is called a {\it non-commutative crepant resolution (NCCR)} of $R$ if $M$ is modifying and $\gl\Gamma_P<\infty$ holds for all $P\in\Spec R$.
\end{enumerate}
\end{Def}

One of the simplest NCCRs is a splitting one in the following sense. This is what we will classify for Gorenstein toric singularities with divisor class group of rank one.

\begin{Def}
Let $R$ be a Gorenstein normal domain with $\dim R\geq2$. We say that a reflexive $R$-module $M$ is {\it splitting} if it is a direct sum of divisorial modules. If a splitting reflexive module gives an NCCR $\End_R(M)$, then it is called a {\it splitting NCCR}. If $R$ is a toric singularity, then a splitting NCCR is also called a {\it toric NCCR}.
\end{Def}

In \cite{IW}, the operation called mutations was introduced for NCCRs.

\begin{Def}\cite[1.21]{IW}
Let $R$ be a Gorenstein normal domain with $\dim R\geq2$ and $M$ a modifying $R$-module. Let $0\ncong N\in\add M$. Take a right $(\add N)$-approximation $N_0\to M$ of $M$ and a right $(\add N^*)$-approximation $N_1^*\to M^*$ of $M^*$ where $(-)^*=\Hom_R(-,R)$. Take their kernels.
\[0\to K_0\to N_0\to M,\ 0\to K_1\to N_1^*\to M^*\]
We define the {\it right mutation} of $M$ at $N$ to be $\mu_N^+(M):=N\oplus K_0$ and the {\it left mutation} of $M$ at $N$ to be $\mu_N^-(M):=N\oplus K_1^*$.
\end{Def}

This operation satisfies the following desirable properties.

\begin{Thm}\cite[6.8,6.10]{IW}
Let $R$ be an equi-codimensional Gorenstein normal domain with $\dim R\geq2$ and $M$ a modifying $R$-module. Let $0\ncong N\in\add M$.
\begin{enumerate}
\item $\End_R(M),\End_R(\mu_N^+(M))$ and $\End_R(\mu_N^-(M))$ are derived equivalent to one another.
\item $\mu_N^+(M)$ and $\mu_N^-(M)$ are modifying modules.
\item If $M$ gives an NCCR, then so do $\mu_N^+(M)$ and $\mu_N^-(M)$.
\end{enumerate}
\end{Thm}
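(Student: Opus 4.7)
The strategy is to realize the claimed derived equivalences via explicit tilting modules over $\Lambda := \End_R(M)$. Since $M$ is modifying, $\Lambda$ is a Noether $R$-algebra which is maximal Cohen-Macaulay over $R$, and Auslander's projectivization functor $F := \Hom_R(M,-)$ extends (on the reflexive closure of $\add M$) to a fully faithful embedding into $\refl\Lambda$, sending $M$ to the regular module $\Lambda$ and sending direct summands of $M$ to projective $\Lambda$-modules. This reflexive equivalence is the engine that will move the whole problem from $R$-modules to $\Lambda$-modules, where standard tilting theory applies.

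Writing $M = N \oplus L$, I set $T^+ := F(\mu_N^+(M)) = F(N) \oplus F(K_0)$ as a right $\Lambda$-module. The summand $F(N)$ is a summand of $\Lambda$, hence projective. Applying $F$ to the approximation sequence $0 \to K_0 \to N_0 \to M$ and comparing with $F(M) = \Lambda$ produces a two-term projective resolution of $F(K_0)$ over $\Lambda$, which gives the bound $\pd_\Lambda T^+ \leq 1$. The vanishing $\Ext^1_\Lambda(T^+, T^+) = 0$ then reduces, via $F$, to the modifying property of $M$ together with the defining approximation property of $N_0 \to M$. After a routine generation check (the summands of $F(M)$ corresponding to $N$ are already present in $T^+$, and $F(K_0)$ together with $F(N)$ spans the same thick subcategory as $\Lambda$ by the two-term resolution), $T^+$ is a classical tilting module. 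The reflexive equivalence identifies $\End_\Lambda(T^+)^{\op} \cong \End_R(\mu_N^+(M))$, and Rickard's tilting theorem delivers the derived equivalence in (1). The left mutation $\mu_N^-(M)$ is handled symmetrically by dualizing via $(-)^* = \Hom_R(-,R)$, which swaps left- and right-approximations.

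Parts (2) and (3) then fall out of the tilting framework. The tilting module $T^+$ is itself maximal Cohen-Macaulay as a $\Lambda$-module, because it lies in the image of $F$ and $F$ sends reflexive (hence CM, as $R$ is Gorenstein) $R$-modules to CM $\Lambda$-modules; transferring CM-ness along the short tilting equivalence then shows $\End_R(\mu_N^+(M))$ is CM over $R$, which gives (2). For (3), a derived equivalence between Noether $R$-algebras preserves finiteness of global dimension after localizing at every prime, so if $\End_R(M)$ is an NCCR then so is $\End_R(\mu_N^\pm(M))$. The main obstacle is the tilting verification in the second paragraph --- specifically, simultaneously controlling the projective dimension of $T^+$ and the self-$\Ext^1$ vanishing. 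Both steps must squeeze maximal mileage out of the approximation sequence and the modifying hypothesis, and care is required in promoting $\Ext_R^1(M,M) = 0$ (after localization) to Ext-vanishing statements about $F(K_0)$ over $\Lambda$; the equi-codimensional Gorenstein hypothesis is precisely what guarantees that CM-ness and the relevant Ext-vanishing behave well under localization at each prime.
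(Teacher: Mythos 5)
The paper does not prove this statement; it is quoted from Iyama--Wemyss \cite[6.8, 6.10]{IW}, and your overall strategy --- pass to $\Lambda=\End_R(M)$ via $F=\Hom_R(M,-)$, show that the image of the mutated module is a tilting $\Lambda$-module, and invoke Rickard --- is exactly the strategy of that source. However, two of your key steps fail as written. First, the projective dimension bound: $F$ is covariant and left exact, so applying it to $0\to K_0\to N_0\to M$ gives $0\to F(K_0)\to F(N_0)\to \Lambda$, in which the new summand $F(K_0)$ appears as a \emph{kernel} of a map of projectives. That is a projective copresentation, not a presentation, and yields no bound $\pd_\Lambda F(K_0)\le 1$; note also that $F(N_0)\to\Lambda$ is not surjective unless $M\in\add N$, since its image consists of the endomorphisms of $M$ factoring through $\add N$. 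To get a genuine two-term projective resolution one must use the dual-side exchange sequence (this is precisely why the left mutation is defined through a right $\add N^{*}$-approximation of $M^{*}$) and then prove that the dualized sequence remains exact after applying $F$, and that $\Ext^1_\Lambda(T,T)=0$ together with the generation condition holds. In \cite{IW} these exactness and vanishing statements are the technical heart of the argument and rest on depth counting (the depth lemma) and the equi-codimensional Gorenstein hypothesis; they do not follow from ``the modifying property plus the defining approximation property'' alone. Your own remark that this is ``the main obstacle'' is accurate: it is the proof, and it is missing.

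Second, part (2): your argument that $T^+$ is Cohen-Macaulay because ``$F$ sends reflexive (hence CM, as $R$ is Gorenstein) $R$-modules to CM $\Lambda$-modules'' is false on both counts. Over a Gorenstein normal domain of dimension at least $3$, reflexive modules need not be Cohen-Macaulay, and $\Hom_R(M,X)$ for reflexive $M,X$ is only guaranteed to have depth at least $2$; nor is CM-ness of an endomorphism ring transported through a derived equivalence for free. In \cite{IW} the modifying property of $\mu_N^{\pm}(M)$ is proved directly from the exchange sequences by depth arguments, and it is an ingredient feeding into the derived equivalence and the NCCR statement rather than a consequence of them. Part (3) is fine in outline once (1) and (2) are actually established, since the tilting equivalence localizes and finiteness of global dimension is preserved.
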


\section{NCCRs of toric singularities with divisor class group of rank one}

Let $R$ be a Gorenstein toric singularity with $\rk\Cl(R)=1$. By the discussions in the previous section, giving such $R$ is equivalent to the following. Let $G$ be a finitely generated abelian group of rank one. Let $l\geq1, l'\geq1$ and $d\geq l+l'$. Put $\pi\colon G\to G/G_{\rm{tors}}\cong\mathbb{Z}$ and take elements $\vec{x}_0,\cdots,\vec{x}_l,\vec{x}'_0,\cdots,\vec{x}'_{l'},\vec{y}_1,\cdots,\vec{y}_{d-l-l'}\in G$ satisfying the following three conditions. Here, to ease the notations, for $1\le j\le d+2$, we put
\[\vec{z}_j:=\left\{
\begin{array}{ll}
\vec{x}_{j-1} & (1\leq j\leq l+1)\\
\vec{x}'_{j-l-2} & (l+2\leq j\leq l+l'+2)\\
\vec{y}_{j-l-l'-2} & (l+l'+3\leq j\leq d+2)
\end{array}
\right..\]
\begin{enumerate}
\item[(G1)] $\sum_{j=1}^{d+2}\mathbb{Z}\vec{z}_j=G$
\item[(G2)] $\sum_{j=1}^{d+2}\vec{z}_j=0$
\item[(G3)] $\pi(\vec{x}_i)>0,\pi(\vec{x}'_{i'})<0,\pi(\vec{y}_{i''})=0\ (0\le i\le l,0\le i'\le l',1\le i''\le d-l-l')$
\end{enumerate}

In this notation, $S=k[x_0,\cdots,x_l,x'_0,\cdots,x'_{l'},y_1,\cdots,y_{d-l-l'}]$ and $R=S_0$. We remark that by Proposition \ref{pyra}, $d=l+l'$ holds if and only if the corresponding $d$-dimensional lattice polytope $P$ is not a pyramid.

\subsection{When are divisorial modules Cohen--Macaulay?}

Recall that we have a group isomorphism $G\xrightarrow{\cong}\Cl(R);\vec{g}\mapsto[S_{\vec{g}}]$ (Theorem \ref{clgptoric}). In this subsection, we give an equivalent condition for $S_{\vec{g}}$ to be maximal Cohen--Macaulay as an $R$-module. Put $H:=G/(\sum_{i''=1}^{d-l-l'}\mathbb{Z}\vec{y}_{i''})$ and let $q\colon G\to H$ be the natural surjection. Define
\[H_{\geq0}:=\sum_{i=0}^l\mathbb{Z}_{\geq0}q(\vec{x}_i)+\sum_{i'=0}^{l'}\mathbb{Z}_{\geq0}q(-\vec{x}'_{i'})\subseteq H.\]
Then we can define a partial order on $H$ as
\[h_1\geq h_2:\Leftrightarrow h_1-h_2\in H_{\geq0}\text{ for }h_1,h_2\in H.\]
Put $s:=\sum_{i=0}^lq(\vec{x}_i)=\sum_{i'=0}^{l'}q(-\vec{x}'_{i'})\in H$.

Now we can give the following characterization of Cohen--Macaulayness of divisorial modules as a generalization of \cite[7.8]{Sta}. For $a=(a_i)_{i=0}^l\in\mathbb{Z}^{l+1}$, we write
\[a\cdot\vec{x}:=\sum_{i=0}^la_i\vec{x}_i\in G.\]
We also define $a'\cdot\vec{x}'$ and $b\cdot\vec{y}$ for $a'\in\mathbb{Z}^{l'+1}$ and $b\in\mathbb{Z}^{d-l-l'}$ in the same way.

\begin{Thm}\label{divCM}
Let $R$ and $G$ be as above. For $\vec{g}\in G$, the following conditions are equivalent.
\begin{enumerate}
\item $S_{\vec{g}}$ is maximal Cohen--Macaulay as an $R$-module.
\item $q(\vec{g})\ngeq s$ and $q(\vec{g})\nleq -s$ hold.
\end{enumerate}
\end{Thm}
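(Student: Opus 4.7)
My plan is to apply Proposition \ref{lcohGortoric}: since $\dim R=d+1$, the module $S_{\vec{g}}$ is maximal Cohen-Macaulay iff $\tilde{H}_j(X_a;k)=0$ for every $j\geq 0$ and every $a\in\mathbb{Z}^{d+2}$ with $\sum_i a_i\vec{x_i}=\vec{g}$. My first step is to use Lemma \ref{torscont} to dispose of any $a$ with $a_{i_0}\geq 0$ for some $i_0>l+l'$ (for such $a$ the space $X_a$ is contractible), reducing to $a$'s with $a_i<0$ for all torsion indices $i>l+l'$. For such $a$ I would set $A:=\{1\leq i\leq l+l'\mid a_i\geq 0\}$, so that $X_a$ is exactly the union of those faces of $P$ whose vertex index set lies in $A$.

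Next I would invoke Gale duality applied to the rank-one Gale transform $(\pi(\vec{x_i}))_{i=1}^{d+2}$ of the configuration $\{v_i\}$: a subset $F\subseteq\{1,\ldots,d+2\}$ is (the vertex set of) a face of $P$ iff $F\supseteq\{1,\ldots,l+l'\}$, or $F^c$ contains both some $i\leq l$ and some $j$ with $l<j\leq l+l'$. Using this I carry out a case analysis on $A$. (i) If $A$ misses both a positive index and a negative index, or $A=\{1,\ldots,l+l'\}$, then $A$ itself is a face and $X_a=\Conv\{v_i\}_{i\in A}$ is a contractible polytope. (ii) If $A\supsetneq\{1,\ldots,l\}$ but $A\not\supseteq\{l+1,\ldots,l+l'\}$ (the symmetric case is analogous), pick a negative $n_0\in A$: every face $F\subseteq A$ of $P$ must miss a positive (otherwise Gale duality forces $F\supseteq\{1,\ldots,l+l'\}$, contradicting the hypothesis on $A$), so $F\cup\{n_0\}$ still misses that positive and any negative in $\{l+1,\ldots,l+l'\}\setminus A$ and is therefore a face; this identifies $X_a$ with the closed star of $v_{n_0}$, which is contractible. (iii) If $A=\{1,\ldots,l\}$ then all proper subsets of $A$ are faces but $A$ itself is not, and since the rank-one Gale relation forces $\{v_1,\ldots,v_l\}$ to be affinely independent, $X_a=\partial\Conv\{v_1,\ldots,v_l\}\cong S^{l-2}$ with $\tilde{H}_{l-2}(X_a)\neq 0$ (as $l\geq 2$). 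The symmetric case $A=\{l+1,\ldots,l+l'\}$ gives $X_a\cong S^{l'-2}$.

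Consequently $S_{\vec{g}}$ fails to be MCM iff some admissible $a$ realises $A=\{1,\ldots,l\}$ or $A=\{l+1,\ldots,l+l'\}$. To match the first with the partial order, I would rewrite: such an $a$ exists iff there are non-negative integers $b_1,\ldots,b_l,b_1',\ldots,b_{l'}'$ with $a_i=b_i$ for $i\leq l$ and $a_{l+j}=-b_j'-1$ for $1\leq j\leq l'$. Using the identity $p=-\sum_{j=1}^{l'}q(\vec{x_{l+j}})$ in $H$ (from $\sum_i\vec{x_i}=0$ in $G$), the relation $\sum a_i q(\vec{x_i})=q(\vec{g})$ then rearranges to $q(\vec{g})-p=\sum_i b_i q(\vec{x_i})+\sum_j b_j'(-q(\vec{x_{l+j}}))\in H_{\geq 0}$, equivalently $q(\vec{g})\geq p$. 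The symmetric case yields $q(\vec{g})\leq -p$. Completion of $(a_i)_{i\leq l+l'}$ to a full $a\in\mathbb{Z}^{d+2}$ with $a_i<0$ for $i>l+l'$ is always possible by subtracting sufficiently large multiples of the orders of the torsion generators.

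The main obstacle I anticipate is the topological case analysis in step (ii): verifying via Gale duality that enlarging a face $F\subseteq A$ of $P$ by the chosen negative vertex $n_0$ again yields a face of $P$. The rank-one structure of $\Cl(R)$ is essential here, and one must take care to ensure that the list of ``bad'' $A$'s is exactly $\{1,\ldots,l\}$ and $\{l+1,\ldots,l+l'\}$ while all other configurations give contractible $X_a$.
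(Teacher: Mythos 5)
Your argument is correct and follows essentially the same route as the paper: reduce via Proposition \ref{lcohGortoric} and Lemma \ref{torscont}, show by a sign-pattern case analysis (the paper organizes the cases slightly differently but uses the same face criterion you call Gale duality) that only $A=\{1,\ldots,l\}$ and $A=\{l+1,\ldots,l+l'\}$ yield non-vanishing reduced homology, namely spheres $S^{l-2}$ and $S^{l'-2}$, and then translate the existence of such an $a$ into $q(\vec{g})\geq p$ or $q(\vec{g})\leq -p$ using that the torsion generators absorb the remaining coordinates with negative entries. The only nitpick is that in your case (i) the subcase $A=\emptyset$ gives $X_a=\emptyset$ rather than a contractible polytope, which is harmless since only $\tilde{H}_j(X_a;k)$ with $j\geq 0$ is relevant for maximal Cohen-Macaulayness.
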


\begin{proof}
Note that (1) is equivalent to $H^r_{\mathfrak{m}}(S_{\vec{g}})=0$ holds for $0\leq r\leq d$. Thus first, for $c=(a,a',b)\in\mathbb{Z}^{d+2}$ where $a=(a_i)_{i=0}^{l}\in\mathbb{Z}^{l+1},a'=(a'_{i'})_{i'=0}^{l'}\in\mathbb{Z}^{l'+1}$ and $b=(b_{i''})_{i''=1}^{d-l-l'}\in\mathbb{Z}^{d-l-l'}$, we compute $\tilde{H}_r(X_c;k)$ in the notation of Proposition \ref{lcohGortoric} by using the computations of $X_c$ in Lemma \ref{caltop}. Then by Proposition \ref{lcohGortoric}, we can conclude that $S_{\vec{g}}$ is {\it not} maximal Cohen--Macaulay if and only if there exists $c=(a,a',b)\in\mathbb{Z}^{d+2}$ satisfying $a\cdot\vec{x}+a'\cdot\vec{x}'+b\cdot\vec{y}=\vec{g}$ and either the following (6) or (7).
\begin{enumerate}
\item[(6)] $a\in\mathbb{Z}_{\ge0}^{l+1}, a'\in\mathbb{Z}_{<0}^{l'+1}$ and $b\in\mathbb{Z}_{<0}^{d-l-l'}$
\item[(7)] $a\in\mathbb{Z}_{<0}^{l+1}, a'\in\mathbb{Z}_{\ge0}^{l'+1}$ and $b\in\mathbb{Z}_{<0}^{d-l-l'}$
\end{enumerate}
If there exists $\vec{c}\in\mathbb{Z}^{d+2}$ satisfying $a\cdot\vec{x}+a'\cdot\vec{x}'+b\cdot\vec{y}=\vec{g}$ and (6), then we have $q(\vec{g})\geq s$ in $H$. Conversely, assume $q(\vec{g})\geq s$ holds. Then there exist $a\in\mathbb{Z}^{l+1}_{\geq0}$ and $a'\in\mathbb{Z}^{l'+1}_{<0}$ such that $\vec{g}-a\cdot\vec{x}-a'\cdot\vec{x}'\in\sum_{i''=1}^{d-l-l'}\mathbb{Z}\vec{y}_{i''}$ holds. Since $\vec{y}_1,\cdots,\vec{y}_{d-l-l'}\in G$ are torsion, there exists $b\in\mathbb{Z}^{d-l-l'}_{<0}$ such that $\vec{g}-a\cdot\vec{x}-a'\cdot\vec{x}'=b\cdot\vec{y}$ holds. In the same way, we can prove that there exists $\vec{c}\in\mathbb{Z}^{d+2}$ satisfying $a\cdot\vec{x}+a'\cdot\vec{x}'+b\cdot\vec{y}=\vec{g}$ and (7) if and only if $q(\vec{g})\leq-s$ holds. Therefore we get the assertion.
\end{proof}

\begin{Lem}\label{caltop}
Let $c=(a,a',b)\in\mathbb{Z}^{d+2}$ where $a=(a_i)_{i=0}^{l}\in\mathbb{Z}^{l+1},a'=(a'_{i'})_{i'=0}^{l'}\in\mathbb{Z}^{l'+1}$ and $b=(b_{i''})_{i''=1}^{d-l-l'}\in\mathbb{Z}^{d-l-l'}$.
\begin{enumerate}
\item If $b\notin\mathbb{Z}_{<0}^{d-l-l'}$ holds, then $X_c$ is contractible.
\end{enumerate}
In what follows, we assume $b\in\mathbb{Z}_{<0}^{d-l-l'}$ holds.
\begin{enumerate}
\setcounter{enumi}{1}
\item If there exist $0\leq i_0\neq i_1\leq l$ such that $a_{i_0}\geq0$ and $a_{i_1}<0$ hold, then $X_c$ is contractible.
\item If there exist $0\leq i'_0\neq i'_1\leq l'$ such that $a'_{i'_0}\geq0$ and $a'_{i'_1}<0$ hold, then $X_c$ is contractible.
\item If $a\in\mathbb{Z}_{<0}^{l+1}$ and $a'\in\mathbb{Z}_{<0}^{l'+1}$ hold, then $X_c=\emptyset$
\item If $a\in\mathbb{Z}_{\ge0}^{l+1}$ and $a'\in\mathbb{Z}_{\ge0}^{l'+1}$ hold, then $X_c$ is contractible.
\item If $a\in\mathbb{Z}_{\ge0}^{l+1}$ and $a'\in\mathbb{Z}_{<0}^{l'+1}$ hold, then $X_c$ is homeomorphic to the $(l-1)$-dimensional sphere $S^{l-1}$.
\item If $a\in\mathbb{Z}_{<0}^{l+1}$ and $a'\in\mathbb{Z}_{\ge0}^{l'+1}$ hold, then $X_c$ is homeomorphic to the $(l'-1)$-dimensional sphere $S^{l'-1}$.
\end{enumerate}
\end{Lem}
\begin{proof}
(1) This follows by Lemma \ref{torscont}.

(2) Take $I\in\Delta_c$ with $i_0+1\notin I$. Then there exists $s_j\in\mathbb{Z}$ for $1\leq j\leq d+2$ such that $s_j\left\{
\begin{array}{ll}
=0 & (j\in I)\\
>0 & (j\notin I)
\end{array}
\right.$ and $\sum_{j=1}^{d+2}s_j\vec{z}_j=0$ hold. Take $u,v\in\mathbb{Z}_{>0}$ with $u\vec{x}_{i_0}=v\vec{x}_{i_1}$. Such $u$ and $v$ exist since $\pi(\vec{x}_{i_0}),\pi(\vec{x}_{i_1})>0$. For $1\le j\le d+2$, put 
\[t_j:=\left\{
\begin{array}{ll}
us_j & (j\neq i_0+1,i_1+1)\\
0 & (j=i_0+1)\\
vs_{i_0+1}+us_{i_1+1} & (j=i_1+1)
\end{array}
\right..\]
Then we have $t_j\left\{
\begin{array}{ll}
=0 & (j\in I\sqcup{\{i_0+1\}})\\
>0 & (j\notin I\sqcup{\{i_0+1\}})
\end{array}
\right.$ and $\sum_{j=1}^{d+2}t_j\vec{z}_j=0$. Thus $I\sqcup{\{i_0+1\}}\in\Delta_c$ holds. Therefore $X_c$ is homotopic to the point $\{v_{i_0+1}\}$.

(3) This can be shown in the same way as (2).

(4) This is obvious.

(5) Take $u\geq1$ so that $u\vec{y}_{i''}=0$ holds for $1\leq i''\leq d-l-l'$. Then by $\sum_{i''=1}^{d-l-l'}u\vec{y}_{i''}=0$, we can see $\{1,\cdots, l+l'+2\}\in\Delta_c$. Thus $X_c=\Conv\{v_1,\cdots,v_{l+l'+2}\}$ holds and this is contractible.

(6) Observe that $I:=\{1,\cdots,l+1\}\notin\Delta_c$ holds since for every $(s_j)_{j=1}^{d+2}\in\mathbb{Z}^{d+2}$ with $s_j\left\{
\begin{array}{ll}
=0 & (1\leq j\leq l+1)\\
>0 & (l+2\leq j\leq d+2)
\end{array}
\right.$, we have $\pi(\sum_{j=1}^{d+2}s_j\vec{z}_j)<0$. Take a proper subset $I'\subsetneq I$. Then by considering the signs of $\pi(\vec{z}_j)\in\mathbb{Z}$, there exists $(s_j)_{j=1}^{d+2}\in\mathbb{Z}^{d+2}$ with $s_j\left\{
\begin{array}{ll}
=0 & (j\in I')\\
>0 & (j\notin I')
\end{array}
\right.$ and $\sum_{j=1}^{d+2}s_j\vec{z}_j=0$. Thus we have $I'\in\Delta_c$. Therefore $X_c$ is homeomorphic to the $(l-1)$-dimensional sphere $S^{l-1}$.

(7) This can be shown in the same way as (6).
\end{proof}

\subsection{NCCRs}

Recall that we can equip the quotient group $H=G/(\sum_{i''=1}^{d-l-l'}\mathbb{Z}\vec{y}_{i''})$ with a partial order. In addition, $\mathbb{Z}$ acts on $H$ by $n\cdot h:=h+ns$ for $h\in H$ and $n\in\mathbb{Z}$. This action satisfies the conditions (A1), (A2) and (A3). Now we exhibit our main theorem of this paper which classifies the toric NCCRs of Gorenstein toric singularities $R$ with $\rk\Cl(R)=1$.

\begin{Thm}\label{upNCCRcorr}
Let $R$ be a Gorenstein toric singularity with $\rk\Cl(R)=1$. Under the above notations, we have a bijection between the following sets.
\begin{enumerate}
\item The set $\I_H$ of non-trivial upper sets in $H$.
\item $\{J\subseteq G\mid\bigoplus_{\vec{g}\in J}S_{\vec{g}}\in\refl R\text{ gives an NCCR}\}$
\end{enumerate}
A bijection from (1) to (2) is given by $I\mapsto q^{-1}(J(I))$.
\end{Thm}

To prove Theorem \ref{upNCCRcorr}, we first give a classification of splitting modifying modules.

\begin{Prop}\label{charmod}
For a subset $J\subseteq G$, the following conditions are equivalent.
\begin{enumerate}
\item $\bigoplus_{\vec{g}\in J}S_{\vec{g}}\in\refl R$ is a modifying module.
\item $q(J)\in\widetilde{\J}_H$
\item There exists $I\in\I_H$ such that $q(J)\subseteq J(I)$ holds.
\end{enumerate}
\end{Prop}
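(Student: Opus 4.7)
The plan is to derive (1)$\Leftrightarrow$(2) directly from the computation of $\End_R(M)$ and Theorem \ref{divCM}, while (2)$\Leftrightarrow$(3) is immediate from Proposition \ref{charJX} applied to $X=H$.

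For (1)$\Leftrightarrow$(2), I would first write $M=\bigoplus_{\vec{g}\in J}S_{\vec{g}}$ and compute
\[
\End_R(M)\cong\bigoplus_{\vec{g},\vec{h}\in J}\Hom_R(S_{\vec{g}},S_{\vec{h}})\cong\bigoplus_{\vec{g},\vec{h}\in J}S_{\vec{h}-\vec{g}},
\]
using Proposition \ref{refhom} for the second isomorphism. Since a finite direct sum of $R$-modules is maximal Cohen-Macaulay if and only if each summand is, $M$ is modifying if and only if $S_{\vec{h}-\vec{g}}$ is MCM for every pair $\vec{g},\vec{h}\in J$.

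Next I would apply Theorem \ref{divCM}: each $S_{\vec{h}-\vec{g}}$ is MCM if and only if $q(\vec{h}-\vec{g})\ngeq p$ and $q(\vec{h}-\vec{g})\nleq -p$. The condition $q(\vec{h}-\vec{g})\nleq -p$ is equivalent to $q(\vec{g}-\vec{h})\ngeq p$, which is the first condition with the roles of $\vec{g}$ and $\vec{h}$ swapped. Hence, as $(\vec{g},\vec{h})$ ranges over all ordered pairs in $J\times J$, the two conditions collapse to the single requirement: for all $\vec{g},\vec{h}\in J$, $q(\vec{h})\ngeq q(\vec{g})+p$ in $H$. This is exactly the definition of $q(J)\in\widetilde{\J}_H$, establishing (1)$\Leftrightarrow$(2).

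Finally, (2)$\Leftrightarrow$(3) is just Proposition \ref{charJX} applied with $X=H$: a subset of $H$ lies in $\widetilde{\J}_H$ precisely when it is contained in $I\cap(I^c+p)=J(I)$ for some $I\in\I_H$. I do not anticipate any serious obstacle here; the only thing to be slightly careful about is the symmetrization argument that shows the two inequalities in Theorem \ref{divCM} collapse to a single asymmetric condition on $q(J)$ when both pairs $(\vec{g},\vec{h})$ and $(\vec{h},\vec{g})$ are allowed.
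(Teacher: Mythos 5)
Your proposal is correct and follows essentially the same route as the paper: reduce $\End_R(\bigoplus_{\vec{g}\in J}S_{\vec{g}})$ to the summands $S_{\vec{h}-\vec{g}}$ via Proposition \ref{refhom}, apply Theorem \ref{divCM} to each, and get (2)$\Leftrightarrow$(3) from Proposition \ref{charJX}. The symmetrization you spell out (that $q(\vec{h}-\vec{g})\nleq -p$ is the condition $q(\vec{g}-\vec{h})\ngeq p$ for the swapped pair, so ranging over ordered pairs collapses both inequalities to $q(J)\in\widetilde{\J}_H$) is exactly the detail the paper leaves implicit in its one-line appeal to Theorem \ref{divCM}.
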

\begin{proof}
Recall that for $\vec{g},\vec{h}\in G$, we have $\Hom_R(S_{\vec{g}},S_{\vec{h}})\cong S_{\vec{h}-\vec{g}}$ by Proposition \ref{refhom}. Thus (1)$\Leftrightarrow$(2) follows from Theorem \ref{divCM}. (2)$\Leftrightarrow$(3) follows from Proposition \ref{charJX}.
\end{proof}

\begin{proof}[Proof of Theorem \ref{upNCCRcorr}]
Take $I\in\I_H$ and put $P:=\bigoplus_{\vec{g}\in q^{-1}(J(I))}S(\vec{g})\in\proj^G\!S$. By combining Theorem \ref{upJX}, Proposition \ref{charmod} and \cite[4.5]{IW}, it is enough to show that $P_0=\bigoplus_{\vec{g}\in q^{-1}(J(I))}S_{\vec{g}}\in\refl R$ gives an NCCR. By Proposition \ref{charmod}, $P_0\in\refl R$ is modifying. Thus it is enough to show $\gl\Gamma<\infty$ where $\Gamma:=\End_R(P_0)\cong\End_S^G(P)$. Put $F:=\Hom^G_S(P,-)\colon\mod^G\!S\to\mod\Gamma$. Take $X\in\mod\Gamma$ and take a projective presentation $Q_1\xrightarrow{d}Q_0\to X\to0$. Then there exists $d'\colon P_1\to P_0$ in $\add P$ such that we have the following commutative diagram.
\[\xymatrix{
Q_1 \ar[r]^{d} \ar[d]_{\cong} & Q_0 \ar[d]^{\cong} \\
F(P_1) \ar[r]_{F(d')} & F(P_0)
}\]
Extend $d'\colon P_1\to P_0$ to a projective resolution of $\Cok d'$ in $\mod^G\!S$.
\[0\to P_n\to\cdots\to P_0\to \Cok d'\to0\]
Since $F$ is exact, we have the following exact sequence.
\[0\to F(P_n)\to\cdots\to F(P_0)\to F(\Cok d')\to0\]
Since $F(\Cok d')\cong X$ holds, to show $\pd_\Gamma X<\infty$, it is enough to show that $\pd_\Gamma F(P)<\infty$ holds for every $P\in\proj^G\!S$.

Consider the graded Koszul complex of a regular sequence $x_0,\cdots, x_l\in S$.
\[0\to S(-\vec{x}_0-\cdots-\vec{x}_l)\to\cdots\to S\to S/(x_0,\cdots, x_l)\to0\]
For $\vec{g}\in q^{-1}(J(I))$, we have $F(S(\vec{g}))\in\proj\Gamma$. Let $m\in I$ be a minimal element and take $\vec{m}\in q^{-1}(m)$. Observe that for $\vec{h}\in G$ with $q(\vec{h})\nleq0$, we have $(S/(x_0,\cdots, x_l))_{\vec{h}}=0$. Thus we have
\[F((S/(x_0,\cdots, x_l))(\vec{m}+\vec{x}_0+\cdots+\vec{x}_l))\cong\bigoplus_{\vec{g}\in q^{-1}(J(I))}(S/(x_0,\cdots, x_l))_{\vec{m}+\vec{x}_0+\cdots+\vec{x}_l-\vec{g}}=0.\]
Thus by the Koszul complex, we obtain the following exact sequence.
\[0\to F(S(\vec{m}))\to\cdots\to F(S(\vec{m}+\vec{x}_0+\cdots+\vec{x}_l))\to0\]
By the minimality of $m\in I$, for any proper subset $\Lambda\subsetneq\{0,\cdots,l\}$, we have $\vec{m}+\sum_{i\in\Lambda}\vec{x}_i\in q^{-1}(J(I))$. Thus we have $\pd_\Gamma F(S(\vec{m}+\vec x_0+\cdots+\vec x_l))<\infty$. This means that for any $\vec{g}\in q^{-1}(J(\mu_m^-(I)))$, we have $\pd_\Gamma F(S(\vec{g}))<\infty$. Moreover, the converse holds: 
if $\pd_\Gamma F(S(\vec{g}))<\infty$ holds for any $\vec{g}\in q^{-1}(J(\mu_m^-(I)))$, then we have $\pd_\Gamma F(S(\vec{g}))<\infty$ for any $\vec{g}\in q^{-1}(J(I))$. Thus by Corollary \ref{mutgen}, we obtain $\pd_\Gamma F(S(\vec{g}))<\infty$ holds for any $\vec{g}\in G$.

We have proved that $\pd_\Gamma X<\infty$ holds for each $X\in\mod\Gamma$. Since $\Gamma$ is a module-finite $R$-algebra and $\dim R<\infty$, this implies $\gl\Gamma<\infty$.
\end{proof}

Combining Theorem \ref{upNCCRcorr} and Proposition \ref{charmod}, we obtain the following corollary.

\begin{Cor}
Suppose there exists a subset $J\subseteq G$ such that $\bigoplus_{\vec{g}\in J}S_{\vec{g}}\in\refl R$ is a modifying module. Then there exists $J\subseteq J'\subseteq G$ such that $\bigoplus_{\vec{g}\in J'}S_{\vec{g}}\in\refl R$ gives an NCCR.
\end{Cor}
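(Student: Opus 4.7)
The plan is to simply chain together Proposition \ref{charmod} and Theorem \ref{upNCCRcorr}; this corollary is essentially a formal consequence of those two statements, so the proposal is short.

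First I would unwind the hypothesis: the assumption that $M_J := \bigoplus_{\vec{g}\in J}S_{\vec{g}}$ is modifying is, by the equivalence (1)$\Leftrightarrow$(3) of Proposition \ref{charmod}, exactly the statement that there exists a non-trivial upper set $I \in \I_H$ with $q(J) \subseteq J(I)$. So the proposition hands me the upper set I need for free.

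Next I would set $J' := q^{-1}(J(I)) \subseteq G$. The containment $q(J) \subseteq J(I)$ immediately gives $J \subseteq q^{-1}(q(J)) \subseteq q^{-1}(J(I)) = J'$, which is the required enlargement. Finally, by Theorem \ref{upNCCRcorr} applied to the non-trivial upper set $I$, the module $M_{J'} = \bigoplus_{\vec{g} \in q^{-1}(J(I))} S_{\vec{g}}$ gives an NCCR of $R$, which is precisely the desired conclusion.

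Since each step is a direct invocation of an already established equivalence, there is no real obstacle: the only content is noticing that the set $J'$ witnessing the NCCR can be chosen to contain $J$, which is automatic once the modifying property is rephrased as $q(J) \subseteq J(I)$ via Proposition \ref{charmod}.
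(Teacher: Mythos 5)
Your proposal is correct and matches the paper's intended argument: the paper derives this corollary precisely by combining Proposition \ref{charmod} (the equivalence of the modifying property with $q(J)\subseteq J(I)$ for some $I\in\I_H$) with Theorem \ref{upNCCRcorr} applied to $J'=q^{-1}(J(I))$. Your observation that $J\subseteq q^{-1}(q(J))\subseteq q^{-1}(J(I))$ gives the required containment is exactly the point, so nothing is missing.
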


In \cite[3.8]{VdB23}, it is conjectured that if a $(d+1)$-dimensional Gorenstein toric singularity $R$ has an NCCR, then the number of indecomposable direct summands of the reflexive module giving the NCCR coincides with $d!\vol(P)$ where $P$ is the corresponding $d$-dimensional lattice polytope. By using Theorem \ref{vol}, we can verify that this conjecture is true for toric NCCRs of Gorenstein toric singularities of divisor class group of rank one.

\begin{Thm}\label{volrk}
Let $R$ be a Gorenstein toric singularity with $\rk\Cl(R)=1$ and $P$ the corresponding $d$-dimensional lattice polytope with $(d+2)$ vertices. For any reflexive $R$-module $M$ giving a toric NCCR, we have
\[|M|=d!\vol(P).\]
\end{Thm}
\begin{proof}
By Theorem \ref{upNCCRcorr}, there exists $I\in\I_H$ such that $M\cong\bigoplus_{\vec g\in q^{-1}(J(I))}S_{\vec g}$. Thus we have
\[|M|=\#q^{-1}(J(I))=|\sum_{i''=1}^{d-l-l'}\mathbb{Z}\vec{y}_{i''}||H/\mathbb Zs|.\]
Therefore we obtain the desired equality by Theorem \ref{vol}.
\end{proof}

\section{Iyama--Wemyss mutations of toric NCCRs}

In this section, we show that all toric NCCRs of Gorenstein toric singularities with divisor class group of rank one are connected by iterated Iyama--Wemyss mutations.

First, we prove that Iyama--Wemyss mutations of our toric NCCRs are compatible with mutation of upper sets (Definition \ref{defmutup}).

\begin{Thm}\label{toricmut}
Let $I\in\I_H$ and take a minimal element $m\in I$. Put $M:=\bigoplus_{\vec{g}\in q^{-1}(J(I)\setminus\{m\})}S_{\vec{g}}\in\refl R$. Remark that $\bigoplus_{\vec{g}\in q^{-1}(J(I))}S_{\vec{g}}\in\refl R$ gives an NCCR by Theorem \ref{upNCCRcorr}. Then we have
\[(\mu_M^+)^{l'}\Big(\bigoplus_{\vec{g}\in q^{-1}(J(I))}S_{\vec{g}}\Big)=\bigoplus_{\vec{g}\in q^{-1}(J(\mu_m^-(I)))}S_{\vec{g}}=(\mu_M^-)^l\Big(\bigoplus_{\vec{g}\in q^{-1}(J(I))}S_{\vec{g}}\Big)\]
\end{Thm}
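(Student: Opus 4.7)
The plan is to prove the right-mutation identity $(\mu_M^+)^{l'-1}(N)=\bigoplus_{\vec{g}\in q^{-1}(J(\mu_m^-(I)))}S_{\vec{g}}$, writing $N:=\bigoplus_{\vec{g}\in q^{-1}(J(I))}S_{\vec{g}}$ and $M':=\bigoplus_{\vec{g}\in q^{-1}(m)}S_{\vec{g}}$ so that $N=M\oplus M'$, by constructing an explicit $(\add M)$-resolution of the missing summand $M'$ of length $l'$ from the Koszul complex on the regular sequence $x_{l+1},\ldots,x_{l+l'}$ and unravelling iterated right mutations through its syzygies. The left-mutation identity will then follow from the symmetric argument using the Koszul complex on $x_1,\ldots,x_l$ together with the duality $(-)^*=\Hom_R(-,R)$. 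Concretely, for each $\vec{g}\in q^{-1}(m)$ the Koszul complex twisted by $\vec{g}$ is the exact sequence
\[0\to L_{l'}^{\vec{g}}\to L_{l'-1}^{\vec{g}}\to\cdots\to L_1^{\vec{g}}\to L_0^{\vec{g}}\to\bigl(S/(x_{l+1},\ldots,x_{l+l'})\bigr)(\vec{g})\to 0\]
in $\mod^GS$, where $L_k^{\vec{g}}:=\bigoplus_{|I'|=k,\,I'\subseteq\{1,\ldots,l'\}}S(\vec{g}-\sum_{j\in I'}\vec{x_{l+j}})$. Set $L_k:=\bigoplus_{\vec{g}\in q^{-1}(m)}L_k^{\vec{g}}$; applying $(-)_0$, one has $L_0=M'$. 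The minimality of $m\in I$ and the definition of $J(I)$ now yield: for $1\leq k\leq l'-1$ and $|I'|=k$, the element $(m+\sum_{j\in I'}q(-\vec{x_{l+j}}))-p=m-\sum_{j\notin I'}q(-\vec{x_{l+j}})$ lies strictly below $m$ in $H$, hence outside $I$, so $L_k\in\add M$; for $k=l'$ the same computation gives $m\in I$, so $L_{l'}\cong\bigoplus_{\vec{g}'\in q^{-1}(m+p)}S(\vec{g}')$ via $\vec{g}\mapsto\vec{g}-\sum_j\vec{x_{l+j}}$.

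The core computation is: for every $\vec{h}\in q^{-1}(J(I)\setminus\{m\})$, the graded piece $\bigl(S/(x_{l+1},\ldots,x_{l+l'})\bigr)_{\vec{g}-\vec{h}}$ vanishes. Non-vanishing would produce a relation $m-q(\vec{h})=\sum_{i=1}^la_iq(\vec{x_i})$ with $a_i\geq 0$, forcing $q(\vec{h})\leq m$ in $H$; together with $q(\vec{h})\in I\setminus\{m\}$, this contradicts the minimality of $m$. Hence $\Hom_S^G(S(\vec{h}),-)$ kills the rightmost cokernel term, so the truncation $L_\bullet\colon\,0\to L_{l'}\to\cdots\to L_1\to L_0=M'$ is an $(\add M)$-resolution of $M'$ in the following sense: writing $\Omega^0:=M'$ and $\Omega^k:=\ker(L_k\to L_{k-1})$ for $1\leq k\leq l'-1$, the map $L_1\to M'=\Omega^0$ is a right $(\add M)$-approximation with kernel $\Omega^1$, and for each $1\leq k\leq l'-2$ the surjection $L_{k+1}\twoheadrightarrow\Omega^k$ (factoring the Koszul differential $L_{k+1}\to L_k$ through its image $\Omega^k$, by Koszul exactness at $L_k$) is again a right $(\add M)$-approximation with kernel $\Omega^{k+1}$.

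A direct induction then gives $(\mu_M^+)^k(N)=M\oplus\Omega^k$ for $1\leq k\leq l'-1$, and at $k=l'-1$ the injectivity of the leftmost Koszul differential $L_{l'}\hookrightarrow L_{l'-1}$ yields $\Omega^{l'-1}\cong L_{l'}\cong\bigoplus_{\vec{g}'\in q^{-1}(m+p)}S_{\vec{g}'}$, which proves the first equality. For the second, run the symmetric construction with the Koszul complex on $x_1,\ldots,x_l$ twisted by $-\vec{g}$: the middle summands $S(-\vec{g}-\sum_{i\in I'}\vec{x_i})$ for nonempty proper $I'\subsetneq\{1,\ldots,l\}$ lie in $\add M^*$ by the analogous minimality argument (giving $m+\sum_{i\in I'}q(\vec{x_i})\in J(I)\setminus\{m\}$), the vanishing $\bigl(S/(x_1,\ldots,x_l)\bigr)_{\vec{h}-\vec{g}}=0$ is proved in the same way, and since $\mu_M^-$ dualizes the kernel at each step, after $l-1$ iterations the new summand equals the dual of the top Koszul term $\bigoplus_{\vec{g}\in q^{-1}(m)}S(-\vec{g}-\sum_{i=1}^l\vec{x_i})$, namely $\bigoplus_{\vec{g}'\in q^{-1}(m+p)}S_{\vec{g}'}$. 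The main obstacle will be bookkeeping the iterated approximation-kernel (and, for $\mu_M^-$, dualization) steps cleanly; the substantive input is the Koszul vanishing, which reduces directly to the minimality of $m\in I$.
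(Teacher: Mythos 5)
Your proposal is correct and is essentially the paper's own argument in mirror image: the paper proves the $(\mu_M^-)^{l-1}$ equality directly via the graded Koszul complex on $x_1,\dots,x_l$, identifying the truncations in the relevant degrees as right $(\add M^*)$-approximations through Proposition \ref{refhom} and the minimality of $m$, exactly as you do for $(\mu_M^+)^{l'-1}$ with the Koszul complex on $x_{l+1},\dots,x_{l+l'}$. Your sketched "symmetric" second half coincides with the proof actually written in the paper, so no substantive difference remains.
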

\begin{proof}
We only prove the second equality. Put $\mathfrak{a}:=(x_0,\cdots, x_l)\subseteq S$ and consider the graded Koszul complex of a regular sequence $x_0,\cdots, x_l\in S$.
\[0\to F_{l+1}\to\cdots\to F_1\to F_0\to S/\mathfrak{a}\to0\]
Remark that $F_0=S$ and $F_{l+1}=S(-\vec{x}_0-\cdots-\vec{x}_l)$ holds. Thus for $\vec{g}\in G$ with $q(\vec{g})\nleq0$, since $(S/\mathfrak{a})_{\vec{g}}=0$, we get the following exact sequences.
\[0\to(\Omega\mathfrak{a})_{\vec{g}}\to(F_1)_{\vec{g}}\to(F_0)_{\vec{g}}\to0\]
\[0\to(\Omega^{i+1}\mathfrak{a})_{\vec{g}}\to(F_{i+1})_{\vec{g}}\to(\Omega^i\mathfrak{a})_{\vec{g}}\to0\ (1\leq i\leq l-1)\]
Take $\vec{m}\in q^{-1}(m)$. Remark that we have $(F_i)_{-\vec{m}}\in\add M^*$ for $1\le i\le l$. Thus by Proposition \ref{refhom}, these exact sequences lead to that $(F_1)_{-\vec{m}}\to(F_0)_{-\vec{m}}$ and $(F_{i+1})_{-\vec{m}}\to(\Omega^i\mathfrak{a})_{-\vec{m}}\ (1\leq i\leq l-1)$ are right $(\add M^*)$-approximation. Therefore for $1\leq i\leq l$, we obtain
\[(\mu_M^-)^i\Big(\bigoplus_{\vec{g}\in q^{-1}(J(I))}S_{\vec{g}}\Big)=M\oplus\bigoplus_{\vec{m}\in q^{-1}(m)}((\Omega^i\mathfrak{a})_{-\vec{m}})^*\]
inductively. Since $\Omega^l\mathfrak{a}=F_{l+1}=S(-\vec x_0-\cdots-\vec x_l)$, we get the assertion.
\end{proof}

As an immediate corollary, we obtain the following.

\begin{Cor}\label{toricNCCRconn}
All toric NCCRs of $R$ are connected by iterated Iyama--Wemyss mutations. In particular, they are all derived equivalent to each other.
\end{Cor}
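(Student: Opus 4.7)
The plan is to combine the bijection of Theorem \ref{upNCCRcorr}, the compatibility statement of Theorem \ref{toricmut}, and the general connectivity result Proposition \ref{muttrans}. The only genuine input that has not been formally verified yet is that the $\mathbb{Z}$-action on $H$ has finitely many orbits, so I would start there.

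First I would check that $H/\mathbb{Z}p$ is finite. Since $\rk G=1$ and $H=G/(\sum_{i=l+l'+1}^{d+2}\mathbb{Z}\vec{x_i})$ is obtained from $G$ by quotienting out torsion generators, $H$ again has rank one. The image of $p=\sum_{i=1}^{l}q(\vec{x_i})$ under $H\to H/H_{\rm tors}\cong\mathbb{Z}$ equals $\sum_{i=1}^{l}\pi(\vec{x_i})>0$, so $p$ is non-torsion in $H$ and generates a finite-index subgroup of $H/H_{\rm tors}$. Consequently $H/\mathbb{Z}p$ is finite, which is exactly the hypothesis that $\mathbb{Z}$ acts on $H$ with finitely many orbits required for Proposition \ref{muttrans} and Corollary \ref{mutgen}.

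Next I would translate the problem into combinatorics. By Theorem \ref{upNCCRcorr}, every toric NCCR of $R$ is of the form $M_{q^{-1}(J(I))}:=\bigoplus_{\vec{g}\in q^{-1}(J(I))}S_{\vec{g}}$ for a unique $I\in\I_H$, and conversely every $I\in\I_H$ produces such an NCCR. Given two toric NCCRs, call the associated upper sets $I_1,I_2\in\I_H$. By Proposition \ref{muttrans} (applicable by the previous paragraph) there is a finite chain $I_1=I^{(0)},I^{(1)},\dots,I^{(N)}=I_2$ in which each consecutive pair is related by a single mutation at a minimal element: one may pass from $I_1$ first down to $I_1\cap I_2$ and then back up to $I_2$, each step removing or adjoining a minimal element.

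Finally, Theorem \ref{toricmut} says that each such elementary mutation $I^{(k)}\leftrightarrow I^{(k+1)}=\mu_{m}^{-}(I^{(k)})$ of upper sets is realised on the NCCR side by the iterated Iyama-Wemyss mutation
\[
M_{q^{-1}(J(\mu_m^-(I^{(k)})))}=(\mu_M^+)^{l'-1}\bigl(M_{q^{-1}(J(I^{(k)}))}\bigr)=(\mu_M^-)^{l-1}\bigl(M_{q^{-1}(J(I^{(k)}))}\bigr),
\]
where $M$ is the common summand $M_{q^{-1}(J(I^{(k)})\setminus\{m\})}$. Chaining these identifications along the path $I_1\to\cdots\to I_2$ expresses $M_{q^{-1}(J(I_2))}$ as an iterated Iyama-Wemyss mutation of $M_{q^{-1}(J(I_1))}$, proving the first claim. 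The derived equivalence assertion is then immediate from part (1) of the Iyama-Wemyss theorem recalled before Section 4, which guarantees that each single $\mu_N^\pm$ preserves the derived category up to equivalence. I do not anticipate a serious obstacle here; the only subtlety is the finiteness of $H/\mathbb{Z}p$, and the bulk of the work has already been carried out in Theorems \ref{upNCCRcorr} and \ref{toricmut}.
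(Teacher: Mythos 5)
Your proposal is correct and follows essentially the same route as the paper: translate both NCCRs into upper sets via Theorem \ref{upNCCRcorr}, connect them through $I_1\cap I_2\in\I_H$ using Proposition \ref{muttrans}, and realise each elementary mutation of upper sets by iterated Iyama-Wemyss mutations via Theorem \ref{toricmut}, with derived equivalence coming from the Iyama-Wemyss theorem. Your explicit check that $H/\mathbb{Z}p$ is finite (so that the $\mathbb{Z}$-action on $H$ has finitely many orbits) is a correct verification of a hypothesis the paper leaves implicit.
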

\begin{proof}
Take splitting $M_1,M_2\in\refl R$ giving NCCRs. Then by Theorem \ref{upNCCRcorr}, there exist $I_1,I_2\in\I_H$ such that $M_i=\bigoplus_{\vec{g}\in q^{-1}(J(I_i))}S_{\vec{g}}$ for $i=1,2$. By Proposition \ref{muttrans}, $I_1$ and $I_2$ are connected by iterated mutations since $I_1\cap I_2\in\I_H$. Therefore the assertion follows from Theorem \ref{toricmut}.
\end{proof}

\section{A higher-dimensional dimer-model realization of toric NCCRs}

In this section, we restrict ourselves to the case $d=l+l'$, which is equivalent to saying that the corresponding $d$-dimensional lattice polytope $P$ is not a pyramid (Proposition \ref{pyra}); in this case P is simplicial. We describe quivers with relations of the toric NCCRs of $R$ explicitly by constructing certain quivers on the $d$-dimensional torus $T^d$. When $d=2$, these graphs are dual graphs of dimer models.

\subsection{Cut detectors}

Let $l,l'\geq1$. Let $e_i\in\mathbb{Z}^{l+1}$ be the $i$-th unit vector for $0\leq i\leq l$. Put $\alpha_i:=e_i-e_{i-1}$ for $1\leq i\leq l$ and $\alpha_0:=e_0-e_l$. Let $L:=\{v=(v_i)_{i=0}^l\in\mathbb{Z}^{l+1}\mid\sum_{i=0}^lv_i=0\}=\sum_{i=0}^l\mathbb{Z}\alpha_i\subseteq\mathbb{Z}^{l+1}$ be a $l$-dimensional lattice. Similarly, we define a $l'$-dimensional lattice $L'=\sum_{i'=0}^{l'}\mathbb{Z}\alpha'_{i'}\subseteq\mathbb{Z}^{l'+1}$. We define an infinite quiver $\widehat{Q}$ as
\[\widehat{Q}_0:=L\oplus L' \text{ and}\]
\[\widehat{Q}_1:=\bigsqcup_{i=0}^l\{x\to x+\alpha_i\mid x\in L\oplus L'\}\sqcup\bigsqcup_{i'=0}^{l'}\{x\to x+\alpha'_{i'}\mid x\in L\oplus L'\}.\]
We say that an arrow $x\to x+\alpha_i$ in $\widehat{Q}$ has {\it type} $\alpha_i$ and an arrow $x\to x+\alpha'_{i'}$ has {\it type} $\alpha'_{i'}$. A cycle of length $l+l'+2$ in $\widehat{Q}$ consisting of arrows of $l+l'+2$ distinct types is called an {\it elementary cycle}.
\begin{Def}\label{defcut}
 A subset $\widehat{C}\subseteq\widehat{Q}_1$ is called a {\it cut} if the following conditions are satisfied.
 \begin{enumerate}
 \item Every cycle of length $l+1$ in $\widehat{Q}$ consisting of arrows of type $\alpha_0,\cdots,\alpha_l$ has exactly one arrow in $\widehat{C}$.
 \item Every cycle of length $l'+1$ in $\widehat{Q}$ consisting of arrows of type $\alpha'_0,\cdots,\alpha'_{l'}$ has exactly one arrow in $\widehat{C}$.
 \item Every elementary cycle in $\widehat{Q}$ has the same number of arrows in $\widehat{C}$ of type $\alpha$ and arrows in $\widehat{C}$ of type $\alpha'$.
 \end{enumerate}
 \end{Def}

Take a cofinite subgroup $B\subseteq L\oplus L'$ and put $m:=|(L\oplus L')/B|$. Define a quiver $Q$ as
\[Q_0:=(L\oplus L')/B\text{ and}\]
\[Q_1:=\bigsqcup_{i=0}^l\{x+B\to x+\alpha_i+B\mid x\in L\oplus L'\}\sqcup\bigsqcup_{i'=0}^{l'}\{x+B\to x+\alpha'_{i'}+B\mid x\in L\oplus L'\}.\] $Q$ is a finite quiver which may have multiple arrows.  A cut $\widehat{C}\subseteq\widehat{Q}_1$ is said to be $B$-{\it periodic} if $\widehat{C}$ is invariant under $B$-translation. Then a $B$-periodic cut $\widehat{C}\subseteq\widehat{Q}_1$ induces a subset $C\subseteq Q_1$ naturally, which we call a {\it cut} of $Q$. For a cut $C\subseteq Q_1$, we put
\[\gamma(C):=(\sharp\{a\in C\mid\text{The type of $a$ is }\alpha_i.\})_{i=0}^l\in\mathbb{Z}^{l+1}_{\geq0}.\]
We also define $\gamma'(C)\in\mathbb{Z}^{l'+1}_{\geq0}$ similarly. We call the pair $(\gamma(C),\gamma'(C))$ the {\it type} of $C$. For a cut $C\subseteq Q_1$ of type $((\gamma_i)_{i=0}^l,(\gamma'_i)_{i=0}^{l'})$, we have $\sum_{i=0}^l\gamma_i=\sum_{i=0}^{l'}\gamma'_i=m$ by an easy counting argument. 

\begin{Def}
A map $f\colon(L\oplus L')/B\to\mathbb{Z}$ is called a {\it cut detector} of type $(\gamma,\gamma')\in\mathbb{Z}^{l+1}_{\geq0}\times\mathbb{Z}^{l'+1}_{\geq0}$ if it satisfies the following conditions.
\begin{enumerate}
\item $f(0)=0$
\item For every $x\in L$ and $0\leq i\leq l$, we have
\[f(x+\alpha_i+B)\in\{f(x+B)+\gamma_i,f(x+B)+\gamma_i-m\}.\]
\item For every $x\in L$ and $0\leq i\leq l'$, we have
\[f(x+\alpha'_i+B)\in\{f(x+B)-\gamma'_i,f(x+B)-\gamma'_i+m\}.\]
\end{enumerate}
\end{Def}

Then we can prove that cut detectors correspond bijectively to cuts of $Q$.

\begin{Thm}\label{cutbij}
For any pair $(\gamma,\gamma')\in\mathbb{Z}^{l+1}_{\geq0}\times\mathbb{Z}^{l'+1}_{\geq0}$, we have a bijection between the following sets.
\begin{enumerate}
\item The set of cut detectors $f\colon(L\oplus L')/B\to\mathbb{Z}$ of type $(\gamma,\gamma')$.
\item The set of cuts of $Q$ of type $(\gamma,\gamma')$.
\end{enumerate}
\end{Thm}

First, we see that a cut $C$ of $Q$ induces cut detectors of the same type.

\begin{Def}
Let $(\gamma,\gamma')$ be the type of $C$. For $a\in\widehat{Q}_1$ of type $\alpha_i$, we define
\[f_C(a):=
  \begin{cases}
    \gamma_i & a\notin C,\\
    \gamma_i-m & a\in C.
  \end{cases}
\]
For $a\in\widehat{Q}_1$ of type $\alpha'_i$, we define
\[f_C(a):=
  \begin{cases}
    -\gamma'_i & a\notin C,\\
    -\gamma'_i+m & a\in C.
  \end{cases}
\]
For a path $p=a_n\cdots a_1$ in $\widehat{Q}$, we define
\[f_C(p):=\sum_{i=1}^nf_C(a_i).\]
\end{Def}

\begin{Rem}
For a path $p$ in $\widehat{Q}$ of length $0$, we think $f_C(p)=0$.
\end{Rem}

The following can be shown in the same way as \cite[2.5]{DG}.

\begin{Lem}
For paths $p,q$ in $\widehat{Q}$ with the same source and target, we have $f_C(p)=f_C(q)$.
\end{Lem}

Thanks to this lemma, for $x\in L\oplus L'$, we can define
\[f_C(x):=f_C(p_x),\]
where $p_x$ is any path from $0$ to $x$.

\begin{Prop}
Our $f_C\colon L\oplus L'\to\mathbb{Z}$ induces a cut detector $f_C\colon (L\oplus L')/B\to\mathbb{Z}$ of type $(\gamma,\gamma')$.
\end{Prop}
\begin{proof}
It is enough to show that $f_C\colon L\oplus L'\to\mathbb{Z}$ is invariant under the action of $B$ on $L\oplus L'$. Take $x\in L\oplus L'$ and $y\in B$. Let $p_x$ be a path in $\widehat{Q}$ from $0$ to $x$. Since $C$ is $B$-periodic, for the path $p_x+y$ from $y$ to $x+y$, we have $f_C(p_x)=f_C(p_x+y)$. Thus we obtain
\[f_C(x+y)=f_C(y)+f_C(p_x+y)=f_C(y)+f_C(x).\]
Therefore it is enough to show $f_C(y)=0$.

Our proof below is essentially the same as \cite[2.9]{DG}. Let $o_i$ be the order of $\alpha_i+B\in(L\oplus L')/B$. First, we show $f_C(o_i\alpha_i)=0$. Consider the path $0\to\alpha_i\to\cdots\to o_i\alpha_i$, where each arrow is of type $\alpha_i$ and put $\theta'_i:=\sharp\{1\leq j\leq o_i\mid((j-1)\alpha_i\to j\alpha_i)\in C\}$. Then we have $f_C(o_i\alpha_i)=o_i\gamma_i-\theta'_im$. Here, for any $x\in L$, we have
\[f_C(o_i\alpha_i)=f_C(x+o_i\alpha_i)-f_C(x)=f_C(x\to x+\alpha_i\to\cdots\to x+o_i\alpha_i).\]
This implies $\theta'_i=\sharp\{1\leq j\leq o_i\mid(x+(j-1)\alpha_i\to x+j\alpha_i)\in C\}$ holds. Take $x_1,\cdots,x_{\frac{m}{o_i}}\in L\oplus L'$ so that $\{x_l+B\}_l\subseteq(L\oplus L')/B$ gives a complete set of representatives for $((L\oplus L')/B)/\mathbb{Z}(\alpha_i+B)$. Then each arrow of type $\alpha_i$ in $Q$ appears exactly once in cycles
\[x_l\to x_l+\alpha_i\to\cdots\to x_l+o_i\alpha_i\ (1\leq l\leq\frac{m}{o_i}).\]
This means $\frac{m}{o_i}\theta'_i=\gamma_i$. Therefore we have
\[f_C(o_i\alpha_i)=o_i\gamma_i-\theta'_im=0.\]
In the same way, if we let $o'_i$ be the order of $\alpha'_i+B\in(L\oplus L')/B$, then we can show $f_C(o'_i\alpha'_i)=0$.

Finally, consider arbitrary $y\in B$. Since $f_C(my)=mf_C(y)$, it is enough to show $f_C(my)=0$. If we write $y=\sum_{i=0}^ly_i\alpha_i+\sum_{i=0}^{l'}y'_i\alpha'_i$, then we have
\[f_C(my)=\sum_{i=0}^ly_i\frac{m}{o_i}f_C(o_i\alpha_i)+\sum_{i=0}^{l'}y'_i\frac{m}{o'_i}f_C(o'_i\alpha'_i)=0.\qedhere\]
\end{proof}

Using this, we can show a similar statement to \cite[2.13]{DG}.

\begin{Cor}
Let $C$ be a cut of $Q$ and $(\gamma,\gamma')$ its type. Take $((m_i)_{i=0}^l,(m'_i)_{i=0}^{l'})\in\mathbb{Z}^{l+1}\times\mathbb{Z}^{l'+1}$. If $\sum_{i=0}^lm_i\alpha_i+\sum_{i=0}^{l'}m'_i\alpha'_i\in B$ holds, then we have $\sum_{i=0}^lm_i\gamma_i-\sum_{i=0}^{l'}m'_i\gamma'_i\in m\mathbb{Z}$.
\end{Cor}
\begin{proof}
By the definition of $f_C$, we have $f_C(\sum_{i=0}^lm_i\alpha_i+\sum_{i=0}^{l'}m'_i\alpha'_i)-(\sum_{i=0}^lm_i\gamma_i-\sum_{i=0}^{l'}m'_i\gamma'_i)\in m\mathbb{Z}$. Since $f_C(\sum_{i=0}^lm_i\alpha_i+\sum_{i=0}^{l'}m'_i\alpha'_i)=0$, we get the conclusion.
\end{proof}

Now we prove Theorem \ref{cutbij}.
\begin{proof}[Proof of Theorem \ref{cutbij}]
Let $f\colon(L\oplus L')/B\to\mathbb{Z}$ be a cut detector of type $(\gamma,\gamma')$. We define a subset $C_f\subseteq Q_1$: for an arrow $a\colon x\to y$ in $Q$,
\[a\in C_f\Leftrightarrow
  \begin{cases}
    f(y)=f(x)+\gamma_i-m & (a\text{ is of type }\alpha_i)\\
    f(y)=f(x)-\gamma'_i+m & (a\text{ is of type }\alpha'_i).
\end{cases} \]
Then this $C_f$ is a cut of $Q$. We show that the type of $C_f$ is $(\gamma,\gamma')$. Let $o_i$ be the order of $\alpha_i+B\in(L\oplus L')/B$. Then we have
\[0=f(o_i\alpha_i+B)=\sum_{j=1}^{o_i}(f(j\alpha_i+B)-f((j-1)\alpha_i+B))=o_i\gamma_i-\theta'_im,\]
where $\theta'_i=\sharp\{1\leq j\leq o_i\mid((j-1)\alpha_i\to j\alpha_i)\in C_f\}$. Here, for any $x\in L\oplus L'$, we have
\[0=f(x+o_i\alpha_i+B)-f(x+B)=\sum_{j=1}^{o_i}(f(x+j\alpha_i+B)-f(x+(j-1)\alpha_i+B)).\]
This implies $\theta'_i=\sharp\{1\leq j\leq o_i\mid(x+(j-1)\alpha_i\to x+j\alpha_i)\in C_f\}$. Thus by taking a complete set of representatives for $((L\oplus L')/B)/\mathbb{Z}(\alpha_i+B)$, we can calculate that the number of arrows in $C_f$ of type $\alpha_i$ is
\[\frac{m}{o_i}\theta'_i=\gamma_i.\]
We can show that the number of arrows in $C_f$ of type $\alpha'_i$ is $\gamma'_i$ in the same way.

By construction, it is easy to check that $f_{C_f}=f$ and $C_{f_C}=C$ hold. This completes the proof.
\end{proof}

\begin{Thm}\label{chartype}
For $(\gamma,\gamma')=((\gamma_i)_{i=0}^l,(\gamma'_{i'})_{i'=0}^{l'})\in\mathbb{Z}_{\geq0}^{l+1}\times\mathbb{Z}_{\geq0}^{l'+1}$, $(\gamma,\gamma')$ is a type of a $B$-periodic cut if and only if both of the following conditions are satisfied.
\begin{enumerate}
\item $\sum_{i=0}^l\gamma_i=\sum_{i'=0}^{l'}\gamma'_{i'}=m$
\item For any $((m_i)_{i=0}^l,(m'_{i'})_{i'=0}^{l'})\in\mathbb{Z}^{l+1}\times\mathbb{Z}^{l'+1}$ with $\sum_{i=0}^lm_i\alpha_i+\sum_{i'=0}^{l'}m'_{i'}\alpha'_{i'}\in B$, we have $\sum_{i=0}^lm_i\gamma_i-\sum_{i'=0}^{l'}m'_{i'}\gamma'_{i'}\in m\mathbb{Z}$.
\end{enumerate}
\end{Thm}

The necessity of these conditions has already been proved. In the next subsection, we give a proof of the sufficiency by introducing cut-upper set correspondence.

\subsection{Cut-upper set correspondence}

Let $(\gamma,\gamma')=((\gamma_i)_{i=0}^l,(\gamma'_{i'})_{i'=0}^{l'})\in\mathbb{Z}_{\geq0}^{l+1}\times\mathbb{Z}_{\geq0}^{l'+1}$ be a pair satisfying the conditions (1) and (2) in Theorem \ref{chartype}. We define group homomorphisms $\Phi\colon\mathbb{Z}^{l+1}\to\mathbb{Z}\oplus((L\oplus L')/B)$ and $\Phi'\colon\mathbb{Z}^{l'+1}\to\mathbb{Z}\oplus((L\oplus L')/B)$ by
\[\Phi(e_i):=(\gamma_i,\alpha_i+B)\text{ and }\Phi'(e_{i'}):=(-\gamma'_{i'},\alpha'_{i'}+B).\]
Put $G=G(B,\gamma,\gamma'):=\Im\Phi+\Im\Phi'\subseteq\mathbb{Z}\oplus((L\oplus L')/B)$ and $\vec{x}_i:=\Phi(e_i),\vec{x}_{i'}':=\Phi'(e_{i'})\in G$. Observe that $\vec{s}:=\sum_{i=0}^l\vec{x}_i=-\sum_{i'=0}^{l'}\vec{x}_{i'}'=(m,0)$ holds. Thus the composition $G\hookrightarrow\mathbb{Z}\oplus((L\oplus L')/B)\twoheadrightarrow(L\oplus L')/B$ induces a group homomorphism $\phi\colon G/\mathbb{Z}\vec{s}\to(L\oplus L')/B$.

\begin{Lem}
The group homomorphism $\phi\colon G/\mathbb{Z}\vec{s}\to (L\oplus L')/B$ is an isomorphism.
\end{Lem}
\begin{proof}
The surjectivity follows from $\phi(\vec{x}_i+\mathbb{Z}\vec{s})=\alpha_i+B$ and $\phi(\vec{x}_{i'}'+\mathbb{Z}\vec{s})=\alpha'_{i'}+B$. Take $\vec{g}=\Phi(v)+\Phi'(v')\in G$ with $\phi(\vec{g}+\mathbb{Z}\vec{s})=0$. If we put $v=(m_i)_{i=0}^l$ and $v'=(m'_{i'})_{i'=0}^{l'}$, then we have $\sum_{i=0}^lm_i\alpha_i+\sum_{i'=0}^{l'}m'_{i'}\alpha'_{i'}\in B$. Thus by our assumption, there exists $n\in\mathbb{Z}$ with $\sum_{i=0}^lm_i\gamma_i-\sum_{i'=0}^{l'}m'_{i'}\gamma'_{i'}=mn$. This implies $\vec{g}=n\vec{s}$.
\end{proof}

We define
\begin{align*}
\J:=\{&J\subseteq G\colon\text{a complete set of representatives for }G/\mathbb{Z}\vec{s}\mid \\
&\vec{g}+\vec{x}_i,\vec{g}-\vec{x}_{i'}'\in J\sqcup(J+\vec{s})\text{ for all }\vec{g}\in J,0\leq i\leq l\text{ and }0\leq i'\leq l'\}.
\end{align*}
Let $\pi:=(G\hookrightarrow\mathbb{Z}\oplus(L\oplus L')/B\twoheadrightarrow\mathbb{Z})$ denote the composition of natural group homomorphisms. The following proposition is key to prove Theorem \ref{chartype}.

\begin{Prop}\label{cutJcorr}
We have a surjective map
\[C(-)\colon\J\to\{\text{Cuts of $Q$ of type }(\gamma,\gamma')\}.\]
For $J,J'\in\J$, $C(J)=C(J')$ holds if and only if $J=J'+n\vec{s}$ holds for some $n\in\mathbb{Z}$.
\end{Prop}
\begin{proof}
For $J\in\J$, let $C(J)\subseteq Q_1$ be the subset consisting of arrows which do not appear in the Cayley quiver of $J$. More precisely, we can describe $C(J)$ in terms of cut detectors as follows (see Theorem \ref{cutbij}). There exists a unique $n\in\mathbb{Z}$ with $n\vec{s}\in J$. Define a map $f_J\colon(L\oplus L')/B\to\mathbb{Z}$ in the following way. For $x\in L\oplus L'$, take $\vec{g}\in J$ with $\phi(\vec{g}+\mathbb{Z}\vec{s})=x+B$. Then put $f_J(x+B):=\pi(\vec{g}-n\vec{s})=\pi(\vec{g})-nm$. Then we can check that $f_J$ is a cut detector of type $(\gamma,\gamma')$ and put $C(J):=C_{f_J}$. By our definition, for $J,J'\in\J$, $f_J=f_{J'}$ holds if and only if $J=J'+n\vec{s}$ holds for some $n\in\mathbb{Z}$.

We prove the surjectivity of $C(-)$. We use Theorem \ref{cutbij}. Take a cut detector $f\colon(L\oplus L')/B\to\mathbb{Z}$. Put $J:=\{\vec{g}\in G\mid\pi(\vec{g})=f(\phi(\vec{g}+\mathbb{Z}\vec{s}))\}\subseteq G$. Then we have $J\in\J$ and $f_J=f$.
\end{proof}

Now we can prove Theorem \ref{chartype}.

\begin{proof}[Proof of Theorem \ref{chartype}]
By Proposition \ref{cutJcorr}, it is enough to show $\J\neq\emptyset$. For example, if we put $J:=\{\vec{g}\in G\mid0\leq\pi(\vec{g})<m\}\subseteq G$, then  we have $J\in\J$.
\end{proof}

Finally, to state cut-upper set correspondence, we focus on the case of $(\gamma,\gamma')\in\mathbb{Z}_{>0}^{l+1}\times\mathbb{Z}_{>0}^{l'+1}$. In this case, our $G$ and $\vec{x}_i\in G$ satisfy the conditions (G1), (G2) and (G3).

\begin{Thm}\label{cutupcorr}(Cut-upper set correspondence)
Assume $(\gamma,\gamma')\in\mathbb{Z}_{>0}^{l+1}\times\mathbb{Z}_{>0}^{l'+1}$. Then we have a surjective map
\[C(-)\colon\I_G\to\{\text{Cuts of $Q$ of type }(\gamma,\gamma')\}.\]
For $I,I'\in\I_G$, $C(I)=C(I')$ holds if and only if $I=I'+n\vec{p}$ holds for some $n\in\mathbb{Z}$.
\end{Thm}
\begin{proof}
By \cite[2.4]{Tom25b}, we have $\J=\J_G$. Thus the assertion follows from Theorem \ref{upJX} and Proposition \ref{cutJcorr}.
\end{proof}

Putting together the results obtained so far, we obtain the following bijections.

\begin{Prop}\label{3corr}
\begin{enumerate}
\item For integers $l,l'\ge1$, we have a bijection between the following sets.
\begin{enumerate}
\item $\{(B,\gamma,\gamma')\mid B\subseteq L\oplus L'\colon\text{cofinite subgroup},(\gamma,\gamma')\in\mathbb{Z}_{>0}^{l+1}\times\mathbb{Z}_{>0}^{l'+1}\text{ satisfies the conditions in Theorem \ref{chartype}}\}$
\item $\{(G,(\vec{x}_i)_{i=0}^l,(\vec{x}'_{i'})_{i'=0}^{l'})\mid G\colon\text{finitely generated abelian group of rank one}, \\
\vec{x_0},\cdots,\vec x_l,\vec{x}'_0,\cdots\vec{x}'_{l'}\in G\text{ satisfy (G1), (G2), and (G3)}\}/\cong$
\end{enumerate}
Here, in (b), we write $(G^1,(\vec{x}^1_i)_i,(\vec{x}'^1_{i'})_{i'})\cong(G^2,(\vec{x}^2_i)_i,(\vec{x}'^2_{i'})_{i'})$ if there exists a group isomorphism $G^1\cong G^2$ sending $\vec{x}^1_i$ to $\vec{x}^2_i$ and $\vec{x}'^1_{i'}$ to $\vec{x}'^2_{i'}$.
\item Let $X_{l,l'}$ denote the set (1)(a) or (b). For $d\ge2$, we have a bijection between the following sets.
\begin{enumerate}
\item $\bigsqcup_{l+l'=d}X_{l,l'}$
\item $\{d$-dimensional lattice polytopes with $d+2$ vertices that are not pyramids$\}/\mathbb{Z}^d\rtimes GL_d(\mathbb{Z})$
\end{enumerate}
\end{enumerate}
\end{Prop}

\subsection{Quivers with relations of toric NCCRs}

Take a cut $C$ of $Q$ of type $(\gamma,\gamma')\in\mathbb{Z}_{>0}^{l+1}\times\mathbb{Z}_{>0}^{l'+1}$. We define new quivers $Q(C)$ and $\widehat{Q}(\widehat{C})$ by deleting arrows in $C$ and $\widehat{C}$ and adding new arrows to $Q$ and $\widehat{Q}$ respectively as follows. For $0\leq i\leq l$ and $0\leq i'\leq l'$, let
\[V(C)_{ii'}:=\{x\in L\oplus L'\mid(x\to x+\alpha_i),(x\to x+\alpha'_{i'})\in C\}\subseteq L\oplus L'\]
be a subset of $L\oplus L'$. Define $\widehat{Q}(\widehat{C})$ as
\[\widehat{Q}(\widehat{C})_0:=\widehat{Q}_0=L\oplus L'\text{ and}\]
\[\widehat{Q}(\widehat{C})_1:=(\widehat{Q}_1\setminus\widehat{C})\sqcup\bigsqcup_{\substack{0\le i\le l\\0\le i'\le l'}}\{x\to x+\alpha_i+\alpha'_{i'}\mid x\in V(C)_{ii'}\}.\]
Similarly, define $Q(C)$ as
\[Q(C)_0:=Q_0=(L\oplus L')/B\cong G(B,\gamma,\gamma')/\mathbb{Z}\vec{s}\text{ and}\]
\[Q(C)_1:=(Q_1\setminus C)\sqcup\bigsqcup_{\substack{0\le i\le l\\0\le i'\le l'}}\{\vec{g}+\mathbb{Z}\vec{s}\to\vec{g}+\vec{x}_i+\vec{x}_{i'}'+\mathbb{Z}\vec{s}\mid\vec{g}+\mathbb{Z}\vec{s}\in\Phi(V(C)_{ii'})\}.\]
Remark that $\widehat{Q}(\widehat{C})$ gives a Galois covering of $Q(C)$.

Let $M\subseteq\mathbb{Z}[x_0,\cdots,x_l,x'_0,\cdots,x'_{l'}]$ be the set of monomials. We define a map $\rho\colon Q(C)_1\to M$ as
\[\rho(\vec{g}+\mathbb{Z}\vec{s}\to\vec{g}+\vec{x}_i+\mathbb{Z}\vec{s}):=x_i,\]
\[\rho(\vec{g}+\mathbb{Z}\vec{s}\to\vec{g}+\vec{x}'_{i'}+\mathbb{Z}\vec{s}):=x'_{i'}\text{ and}\]
\[\rho(\vec{g}+\mathbb{Z}\vec{s}\to\vec{g}+\vec{x}_i+\vec{x}_{i'}'+\mathbb{Z}\vec{s}):=x_ix'_{i'}.\]
For a path $p=a_n\cdots a_1$ in $Q(C)$ with $n\ge1$, we define
\[\rho(p):=\prod_{i=1}^n\rho(a_i)\in M.\]
Remark that for $m\in M$, we can define $\vec{m}\in G$ naturally. Then for a path $p\colon\vec{g}+\mathbb{Z}\vec{s}\to\vec{h}+\mathbb{Z}\vec{s}$ in $Q(C)$, we have $\overrightarrow{\rho(p)}+\mathbb{Z}\vec{s}=\vec{h}-\vec{g}+\mathbb{Z}\vec{s}$.

Let $I(C)\subseteq kQ(C)$ be the ideal generated by the elements of the form $p-p'$, where $p$ and $p'$ are paths in $Q(C)$ with positive length sharing the same source and target such that $\rho(p)=\rho(p')$ holds. Observe that $I(C)\subseteq kQ(C)_{\ge2}$ holds. We put
\[\Gamma(B,C):=kQ(C)/I(C).\]
Then this algebra becomes a toric NCCR of a Gorenstein toric singularity with divisor class group of rank one given in Theorem \ref{upNCCRcorr}.

\begin{Thm}[Dimer realization theorem]\label{dimerreal}
Let $G$ be a finitely generated abelian group and $\vec{x}_0,\cdots,\vec x_l,\vec{x}'_0,\cdots,\vec{x}'_{l'}\in G$ elements satisfying (G1), (G2) and (G3). Let $S:=k[x_0,\cdots,x_l,x'_0,\cdots,x'_{l'}]$ be a $G$-graded $k$-algebra and put $R:=S_0$. Take $I\in\I_G$ and let $C:=C(I)$ be the corresponding cut (Theorem \ref{cutupcorr}). Then we have
\[\End_R\Big(\bigoplus_{\vec{g}\in J(I)}S_{\vec{g}}\Big)\cong\Gamma(B,C).\]
\end{Thm}
\begin{proof}
Put $\Gamma:=\End_R(\bigoplus_{\vec{g}\in J(I)}S_{\vec{g}})$. In this proof, we view $Q(C)_0=J(I)$. Consider the $k$-algebra homomorphism $f\colon\Gamma(B,C)\to\Gamma$ sending $e_{\vec{g}}$ to ${\rm id}_{S_{\vec{g}}}\in\End_R(S_{\vec{g}})$, $(\vec{g}\to\vec{g}+\vec{x}_i)$ to $x_i\cdot-\in\Hom_R(S_{\vec{g}},S_{\vec{g}+\vec{x}_i})$ and $(\vec{g}\to\vec{g}+\vec{x}_i+\vec{x}_{i'}')$ to $x_ix'_{i'}\cdot-\in\Hom_R(S_{\vec{g}},S_{\vec{g}+\vec{x}_i+\vec{x}'_{i'}})$. We show that $f$ is an isomorphism.

Take $\vec{g},\vec{g}'\in J(I)$. If we put $M_{\vec{g},\vec{g}'}:=\{m\in M\mid\vec{m}=\vec{g}'-\vec{g}\}\subseteq M$, then we have $\Hom_R(S_{\vec{g}},S_{\vec{g}'})\cong S_{\vec{g}'-\vec{g}}=\bigoplus_{m\in M_{\vec{g},\vec{g}'}}km$. This proves $f$ is injective. To prove $f$ is surjective, we show that for any $m=z_{j_1}z_{j_2}\cdots z_{j_n}\in M_{\vec{g},\vec{g}'}$, there exists a path $p\colon\vec{g}\to\vec{g}'$ in $Q(C)$ such that $\rho(p)=m$ holds by induction on $n$. If there exists $1\le s\le n$ such that $(\vec{g}\to\vec{g}+\vec{z}_{j_s})\in Q_1\setminus C$, then by using the induction hypothesis on $\frac{m}{z_{j_s}}\in M_{\vec{g}+\vec{z}_{j_s},\vec{g}'}$, we can prove the assertion. Assume that $(\vec{g}\to\vec{g}+\vec{z}_{j_s})\in C$ holds for every $1\le s\le n$. For $\vec{h}\in G$, let $n_I(\vec{h})\in\mathbb{Z}$ denote the unique integer satisfying $\vec{h}\in J(I)+n_I(\vec{h})\vec{s}$. Then for $\vec{z}\in\{\vec{x}_i\}_{i=0}^l\sqcup\{\vec{x}'_{i'}\}_{i'=0}^{l'}$ with $(\vec{h}\to\vec{h}+\vec{z})\in C$, we have
\[n_I(\vec{h}+\vec{z})-n_I(\vec{h})=\begin{cases}
    1 & (\vec{z}=\vec{x}_i\text{ for some }0\le i\le l)\\
    -1 & (\vec{z}=\vec{x}'_{i'}\text{ for some }0\le i'\le l').
\end{cases}
\]
Assume $\vec{z}_{j_n}=\vec{x}_i$ for some $0\le i\le l$. Then by $n_I(\vec{g})=n_I(\vec{g}')=0$, there must exist $1\le s<n$ such that $\vec{z}_{j_s}=\vec{x}'_{i'}$ holds for some $0\le i'\le l'$. Since $\vec{g}+\mathbb{Z}\vec{s}\in\Phi(V(C)_{ii'})$, we have $(\vec{g}\to\vec{g}+\vec{x}_i+\vec{x}_{i'}')\in Q(C)_1$. Thus by using the induction hypothesis on $\frac{m}{z_{j_s}z_{j_n}}\in M_{\vec{g}+\vec{z}_{j_s}+\vec{z}_{j_n},\vec{g}'}$, we can prove the assertion.
\end{proof}

\section{Examples}

In this section, we classify toric NCCRs of some examples of Gorenstein toric singularities with divisor class group of rank one and determine their quivers by using Theorem \ref{upNCCRcorr}.

First, we see the easiest example: the $3$-dimensional simple singularity of type $A_1$.

\begin{Ex}
Put $G:=\mathbb{Z}$ and $\vec{x}=\vec{y}=1,\vec{z}=\vec{w}=-1\in G$. We view $S:=k[x,y,z,w]$ as a $G$-graded $k$-algebra and define $R:=S_0=k[xz,xw,yz,yw]$. If we equip $H=G$ with our partial order, the quiver of $G$ becomes the following.
\[\xymatrix{
\cdots \ar@2@/^10pt/[r]^{x,y} \ar@2@/^-10pt/[r]_{-z,-w} & \circ \ar@2@/^10pt/[r]^{x,y} \ar@2@/^-10pt/[r]_{-z,-w} & \circ \ar@2@/^10pt/[r]^{x,y} \ar@2@/^-10pt/[r]_{-z,-w} & \circ \ar@2@/^10pt/[r]^{x,y} \ar@2@/^-10pt/[r]_{-z,-w} & \circ \ar@2@/^10pt/[r]^{x,y} \ar@2@/^-10pt/[r]_{-z,-w} & \cdots
}\]
Then there is just one kind of non-trivial upper sets in $G$ up to translations.
\[\xymatrix{
\circ \ar@2@/^10pt/[r]^{x,y} \ar@2@/^-10pt/[r]_{-z,-w} & \circ \ar@2@/^10pt/[r]^{x,y} \ar@2@/^-10pt/[r]_{-z,-w} & \circ \ar@2@/^10pt/[r]^{x,y} \ar@2@/^-10pt/[r]_{-z,-w} & \cdots
}\]
Remark that $\vec{s}=2$ holds. Here, the natural homomorphism $L\oplus L'=\mathbb{Z}^2\to G/\mathbb{Z}\vec{s}=\mathbb{Z}/2\mathbb{Z}$ sends $(a,b)$ to $a+b$. Thus $B=\{(a,b)\in L\oplus L'\mid a+b\in2\mathbb{Z}\}$ holds. If we let $\widehat{C}$ be the corresponding cut of $\widehat{Q}$ to the upper set in $H$, then $\widehat{Q}(\widehat{C})$ is as follows. Here, the black vertices correspond to elements of $B$.
\[\xymatrix{
\circ \ar[d]^w & \bullet \ar[r]^x \ar[l]^y & \circ \ar[d]^w & \bullet \ar[l]^y \\
\bullet \ar[r]^x & \circ \ar[u]^z \ar[d]^w & \bullet \ar[r]^x \ar[l]^y & \circ \ar[u]^z \ar[d]^w \\
\circ \ar[u]^z \ar[d]^w & \bullet \ar[r]^x \ar[l]^y & \circ \ar[u]^z \ar[d]^w & \bullet \ar[l]^y \\
\bullet \ar[r]^x & \circ \ar[u]^z & \bullet \ar[r]^x \ar[l]^y & \circ \ar[u]^z
}\]

Therefore there is just one kind of toric NCCR up to translations with the following quiver $Q(C)$.
\[\xymatrix{
\circ \ar@2@/^10pt/[r]^{x,y} & \circ \ar@2@/^10pt/[l]^{z,w}
}\]
\end{Ex}

Next, we see an example with $\Cl(R)\cong\mathbb{Z}$ having a toric NCCR other than the one constructed in \cite{VdB04a}.

\begin{Ex}
Put $G:=\mathbb{Z}$ and $\vec{x}=2,\vec{y}=3,\vec{z}=-2,\vec{w}=-3\in G$. We view $S:=k[x,y,z,w]$ as a $G$-graded $k$-algebra and define $R:=S_0=k[xz,x^3w^2,y^2z^3,yw]$. Observe that $R$ is a $cA_4$ singularity. If we equip $H=G$ with our partial order, the quiver of $G$ becomes the following.
\[\xymatrix{
\cdots \ar@2@/^18pt/[rr]^{x,-z}, \ar@2@/^-18pt/[rrr]_{y,-w} & \circ \ar@2@/^18pt/[rr]^{x,-z} \ar@2@/^-18pt/[rrr]_{y,-w} & \circ \ar@2@/^18pt/[rr]^{x,-z} \ar@2@/^-18pt/[rrr]_{y,-w} & \circ \ar@2@/^18pt/[rr]^{x,-z} \ar@2@/^-18pt/[rrr]_{y,-w} & \circ \ar@2@/^18pt/[rr]^{x,-z} \ar@2@/^-18pt/[rrr]_{y,-w} & \circ \ar@2@/^18pt/[rr]^{x,-z} \ar@2@/^-18pt/[rrr]_{y,-w} & \circ \ar@2@/^18pt/[rr]^{x,-z} & \circ & \cdots
}\]
Then there are the following two kinds of non-trivial upper sets in $H$ up to translations.
\[\xymatrix{
\circ \ar@2@/^18pt/[rr]^{x,-z} \ar@2@/^-18pt/[rrr]_{y,-w} & \circ \ar@2@/^18pt/[rr]^{x,-z} \ar@2@/^-18pt/[rrr]_{y,-w} & \circ \ar@2@/^18pt/[rr]^{x,-z} \ar@2@/^-18pt/[rrr]_{y,-w} & \circ \ar@2@/^18pt/[rr]^{x,-z} \ar@2@/^-18pt/[rrr]_{y,-w} & \circ \ar@2@/^18pt/[rr]^{x,-z} \ar@2@/^-18pt/[rrr]_{y,-w} & \circ \ar@2@/^18pt/[rr]^{x,-z} & \circ & \cdots
}\]
\[\xymatrix{
\circ \ar@2@/^18pt/[rr]^{x,-z} \ar@2@/^-18pt/[rrr]_{y,-w} & & \circ \ar@2@/^18pt/[rr]^{x,-z} \ar@2@/^-18pt/[rrr]_{y,-w} & \circ \ar@2@/^18pt/[rr]^{x,-z} \ar@2@/^-18pt/[rrr]_{y,-w} & \circ \ar@2@/^18pt/[rr]^{x,-z} \ar@2@/^-18pt/[rrr]_{y,-w} & \circ \ar@2@/^18pt/[rr]^{x,-z} & \circ & \cdots
}\]
Remark that $\vec{s}=5$ holds. Here, the natural homomorphism $L\oplus L'=\mathbb{Z}^2\to G/\mathbb{Z}\vec{s}=\mathbb{Z}/5\mathbb{Z}$ sends $(a,b)$ to $2a-2b$. Thus $B=\{(a,b)\in L\oplus L'\mid a-b\in5\mathbb{Z}\}$ holds. If we let $\widehat{C}_1,\widehat{C}_2$ be the corresponding cut of $\widehat{Q}$ to the two kinds of upper sets in $H$, then $\widehat{Q}(\widehat{C}_1)$ and $\widehat{Q}(\widehat{C}_2)$ are as follows. Here, the black vertices correspond to elements of $B$.
\[\xymatrix{
\circ \ar[d]^w & \bullet \ar[r]^x \ar[l]^y & \circ \ar[r]^x \ar[dl]^{yw} & \circ \ar[d]^w & \circ \ar[r]^x \ar[l]^y & \circ \ar[d]^w & \bullet \ar[l]^y \\
\bullet \ar[r]^x & \circ \ar[r]^x \ar[u]^z \ar[dl]^{yw} & \circ \ar[u]^z \ar[d]^w & \circ \ar[r]^x \ar[l]^y & \circ \ar[u]^z \ar[d]^w & \bullet \ar[r]^x \ar[l]^y & \circ \ar[u]^z \ar[dl]^{yw} \\
\circ \ar[r]^x \ar[u]^z & \circ \ar[u]^z \ar[d]^w & \circ \ar[r]^x \ar[l]^y & \circ \ar[u]^z \ar[d]^w & \bullet \ar[r]^x \ar[l]^y & \circ \ar[r]^x \ar[u]^z \ar[dl]^{yw} & \circ \ar[u]^z \ar[d]^w \\
\circ \ar[u]^z \ar[d]^w & \circ \ar[r]^x \ar[l]^y & \circ \ar[u]^z \ar[d]^w & \bullet \ar[r]^x \ar[l]^y & \circ \ar[r]^x \ar[u]^z \ar[dl]^{yw} & \circ \ar[u]^z \ar[d]^w & \circ \ar[l]^y \\
\circ \ar[r]^x & \circ \ar[u]^z \ar[d]^w & \bullet \ar[r]^x \ar[l]^y & \circ \ar[r]^x \ar[u]^z \ar[dl]^{yw} & \circ \ar[u]^z \ar[d]^w & \circ \ar[r]^x \ar[l]^y & \circ \ar[u]^z \ar[d]^w \\
\circ \ar[u]^z \ar[d]^w & \bullet \ar[r]^x \ar[l]^y & \circ \ar[r]^x \ar[u]^z \ar[dl]^{yw} & \circ \ar[u]^z \ar[d]^w & \circ \ar[r]^x \ar[l]^y & \circ \ar[u]^z \ar[d]^w & \bullet \ar[l]^y \\
\bullet \ar[r]^x & \circ \ar[r]^x \ar[u]^z & \circ \ar[u]^z & \circ \ar[r]^x \ar[l]^y & \circ \ar[u]^z & \bullet \ar[r]^x \ar[l]^y & \circ \ar[u]^z
}\]

\[\xymatrix{
\circ \ar[d]^w & \bullet \ar[r]^x \ar[l]^y & \circ \ar[r]^x \ar[dl]^{yw} & \circ \ar[r]^x \ar[dl]^{yw} & \circ \ar[d]^w & \circ \ar[l]^y \ar[d]^w &\bullet \ar[l]^y \\
\bullet \ar[r]^x & \circ \ar[r]^x \ar[u]^z \ar[dl]^{yw} & \circ \ar[r]^x \ar[u]^z \ar[dl]^{yw} & \circ \ar[u]^z \ar[d]^w & \circ \ar[l]^y \ar[d]^w \ar[ur]^{xz} & \bullet \ar[r]^x \ar[l]^y & \circ \ar[u]^z \ar[dl]^{yw} \\
\circ \ar[r]^x \ar[u]^z & \circ \ar[r]^x \ar[u]^z \ar[dl]^{yw} & \circ \ar[u]^z \ar[d]^w & \circ \ar[l]^y \ar[d]^w \ar[ur]^{xz} & \bullet \ar[r]^x \ar[l]^y & \circ \ar[r]^x \ar[u]^z \ar[dl]^{yw} & \circ \ar[u]^z \ar[dl]^{yw} \\
\circ \ar[r]^x \ar[u]^z & \circ \ar[u]^z \ar[d]^w & \circ \ar[l]^y \ar[d]^w \ar[ur]^{xz} & \bullet \ar[r]^x \ar[l]^y & \circ \ar[r]^x \ar[u]^z \ar[dl]^{yw} & \circ \ar[r]^x \ar[u]^z \ar[dl]^{yw} & \circ \ar[u]^z \ar[d]^w \\
\circ \ar[u]^z \ar[d]^w & \circ \ar[l]^y \ar[d]^w \ar[ur]^{xz} & \bullet \ar[r]^x \ar[l]^y & \circ \ar[r]^x \ar[u]^z \ar[dl]^{yw} & \circ \ar[r]^x \ar[u]^z \ar[dl]^{yw} & \circ \ar[u]^z \ar[d]^w & \circ \ar[l]^y \ar[d]^w \\
\circ \ar[d]^w \ar[ur]^{xz} & \bullet \ar[r]^x \ar[l]^y & \circ \ar[r]^x \ar[u]^z \ar[dl]^{yw} & \circ \ar[r]^x \ar[u]^z \ar[dl]^{yw} & \circ \ar[u]^z \ar[d]^w & \circ \ar[l]^y \ar[d]^w \ar[ur]^{xz} & \bullet \ar[l]^y \\
\bullet \ar[r]^x & \circ \ar[r]^x \ar[u]^z & \circ \ar[r]^x \ar[u]^z & \circ \ar[u]^z & \circ \ar[l]^y \ar[ur]^{xz} & \bullet \ar[r]^x \ar[l]^y & \circ \ar[u]^z
}\]
Therefore there are two kinds of toric NCCRs up to translations with the following quivers $Q(C_1)$ and $Q(C_2)$. Observe that by mutations of non-trivial upper sets in $H$, they are mutated to each other; this corresponds to Iyama--Wemyss mutations.
\[\xymatrix{
\circ \ar@/^15pt/[rr]_x \ar@/^24pt/[rrr]^y & \circ \ar@/^15pt/[rr]_x \ar@/^24pt/[rrr]^y & \circ \ar@/^15pt/[rr]_x \ar@/^15pt/[ll]_z \ar@(ur,dr)^{yw} & \circ \ar@/^15pt/[ll]_z \ar@/^24pt/[lll]^w & \circ \ar@/^15pt/[ll]_z \ar@/^24pt/[lll]^w
}\]
\[\xymatrix{
\circ \ar@/^15pt/[rr]_x \ar@/^21pt/[rrr]^y & & \circ \ar@/^15pt/[rr]_x \ar@/^15pt/[ll]_z \ar@(dl,ul)^{yw} & \circ \ar@/^21pt/[rrr]^y \ar@/^21pt/[lll]^w \ar@(ul,ur)^{xz} & \circ \ar@/^15pt/[rr]_x \ar@/^15pt/[ll]_z \ar@(ur,dr)^{yw} & & \circ \ar@/^15pt/[ll]_z \ar@/^21pt/[lll]^w
}\]
Observe that the second one is not obtained in \cite{VdB04a}.
\end{Ex}

We see examples such that $\Cl(R)$ has a torsion part.

\begin{Ex}\label{extors}
Put $G:=\mathbb{Z}\oplus(\mathbb{Z}/2\mathbb{Z}), \vec{x}=(1,0), \vec{y}=(1,1), \vec{z}=(-1,0), \vec{w}=(-1,1)\in G$. We view $S:=k[x,y,z,w]$ as a $G$-graded $k$-algebra and define $R:=S_0=k[xz,x^2w^2,y^2z^2,yw]$. Observe that $R$ is a $cA_3$ singularity. If we equip $H=G$ with our partial order, the quiver of $G$ becomes the following.
\[\xymatrix{
\cdots \ar@2[r]^x_{-z} \ar@2[dr]^y_{-w} & \circ \ar@2[r]^x_{-z} \ar@2[dr]^y_{-w}  & \circ \ar@2[r]^x_{-z} \ar@2[dr]^y_{-w}  & \circ \ar@2[r]^x_{-z} \ar@2[dr]^y_{-w}  & \circ \ar@2[r]^x_{-z} \ar@2[dr]^y_{-w}  & \cdots \\
\cdots \ar@2[r]^x_{-z} \ar@2[ur]^y_{-w} & \circ \ar@2[r]^x_{-z} \ar@2[ur]^y_{-w} & \circ \ar@2[r]^x_{-z} \ar@2[ur]^y_{-w} & \circ \ar@2[r]^x_{-z} \ar@2[ur]^y_{-w} & \circ \ar@2[r]^x_{-z} \ar@2[ur]^y_{-w}  & \cdots
}\]
Then there are the following two kinds of non-trivial upper sets in $G$ up to translations.
\[\begin{array}{c c}
\xymatrix{
\circ \ar@2[r]^x_{-z} \ar@2[dr]^y_{-w} & \circ \ar@2[r]^x_{-z} \ar@2[dr]^y_{-w} & \circ \ar@2[r]^x_{-z} \ar@2[dr]^y_{-w} & \cdots \\
\circ \ar@2[r]^x_{-z} \ar@2[ur]^y_{-w} & \circ \ar@2[r]^x_{-z} \ar@2[ur]^y_{-w} & \circ \ar@2[r]^x_{-z} \ar@2[ur]^y_{-w} & \cdots
}&\xymatrix{
 & \circ \ar@2[r]^x_{-z} \ar@2[dr]^y_{-w}  & \circ \ar@2[r]^x_{-z} \ar@2[dr]^y_{-w}  & \cdots \\
\circ \ar@2[r]^x_{-z} \ar@2[ur]^y_{-w} & \circ \ar@2[r]^x_{-z} \ar@2[ur]^y_{-w} & \circ \ar@2[r]^x_{-z} \ar@2[ur]^y_{-w}  & \cdots
}\end{array}\]
Remark that $\vec{s}=(2,1)$ holds. Here, the natural homomorphism $L\oplus L'=\mathbb{Z}^2\to G/\mathbb{Z}\vec{s}=\mathbb{Z}/4\mathbb{Z}$ sends $(a,b)$ to $a-b$. Thus $B=\{(a,b)\in L\oplus L'\mid a-b\in4\mathbb{Z}\}$ holds. If we let $\widehat{C}_1,\widehat{C}_2$ be the corresponding cut of $\widehat{Q}$ to the two kinds of upper sets in $H$, then $\widehat{Q}(\widehat{C}_1)$ and $\widehat{Q}(\widehat{C}_2)$ are as follows. Here, the black vertices correspond to elements of $B$.
\[\xymatrix{
\circ \ar[d]^w & \bullet \ar[r]^x \ar[l]^y & \circ \ar[d]^w & \circ \ar[r]^x \ar[l]^y & \circ \ar[d]^w &\bullet \ar[l]^y \\
\bullet \ar[r]^x & \circ \ar[u]^z \ar[d]^w & \circ \ar[r]^x \ar[l]^y & \circ \ar[u]^z \ar[d]^w & \bullet \ar[r]^x \ar[l]^y & \circ \ar[u]^z \ar[d]^w \\
\circ \ar[u]^z \ar[d]^w & \circ \ar[r]^x \ar[l]^y & \circ \ar[u]^z \ar[d]^w & \bullet \ar[r]^x \ar[l]^y & \circ \ar[u]^z \ar[d]^w & \circ \ar[l]^y \\
\circ \ar[r]^x & \circ \ar[u]^z \ar[d]^w & \bullet \ar[r]^x \ar[l]^y & \circ \ar[u]^z \ar[d]^w & \circ \ar[r]^x \ar[l]^y & \circ \ar[u]^z \ar[d]^w \\
\circ \ar[u]^z \ar[d]^w & \bullet \ar[r]^x \ar[l]^y & \circ \ar[u]^z \ar[d]^w & \circ \ar[r]^x \ar[l]^y & \circ \ar[u]^z \ar[d]^w & \bullet \ar[l]^y \\
\bullet \ar[r]^x & \circ \ar[u]^z & \circ \ar[r]^x \ar[l]^y & \circ \ar[u]^z & \bullet \ar[r]^x \ar[l]^y & \circ \ar[u]^z
}\xymatrix{
\circ \ar[d]^w & \bullet \ar[r]^x \ar[l]^y & \circ \ar[r]^x \ar[dl]^{yw} & \circ \ar[d]^w & \circ \ar[l]^y \ar[d]^w & \bullet \ar[l]^y \\
\bullet \ar[r]^x & \circ \ar[r]^x \ar[u]^z \ar[dl]^{yw} & \circ \ar[u]^z \ar[d]^w & \circ \ar[l]^y \ar[d]^w \ar[ur]^{xz} & \bullet \ar[r]^x \ar[l]^y & \circ \ar[u]^z \ar[dl]^{yw} \\
\circ \ar[r]^x \ar[u]^z & \circ \ar[u]^z \ar[d]^w & \circ \ar[l]^y \ar[d]^w \ar[ur]^{xz} & \bullet \ar[r]^x \ar[l]^y & \circ \ar[r]^x \ar[u]^z \ar[dl]^{yw} & \circ \ar[u]^z \ar[d]^w \\
\circ \ar[u]^z \ar[d]^w & \circ \ar[l]^y \ar[d]^w \ar[ur]^{xz} & \bullet \ar[r]^x \ar[l]^y & \circ \ar[r]^x \ar[u]^z \ar[dl]^{yw} & \circ \ar[u]^z \ar[d]^w & \circ \ar[l]^y \ar[d]^w \\
\circ \ar[d]^w \ar[ur]^{xz} & \bullet \ar[r]^x \ar[l]^y & \circ \ar[r]^x \ar[u]^z \ar[dl]^{yw} & \circ \ar[u]^z \ar[d]^w & \circ \ar[l]^y \ar[d]^w \ar[ur]^{xz} & \bullet \ar[l]^y \\
\bullet \ar[r]^x & \circ \ar[r]^x \ar[u]^z & \circ \ar[u]^z & \circ \ar[l]^y \ar[ur]^{xz} & \bullet \ar[r]^x \ar[l]^y & \circ \ar[u]^z
}\]
Therefore there are two kinds of toric NCCRs up to translations with the following quivers $Q(C_1)$ and $Q(C_2)$. Observe that by mutations of non-trivial upper sets in $H$, they are mutated to each other, which corresponds to Iyama--Wemyss mutations.
\[\begin{array}{c c}
\xymatrix{
\circ \ar@<0.25ex>[r]^x \ar@<0.25ex>[dr]^y & \circ \ar@<0.25ex>[l]^z \ar@<0.25ex>[dl]^w \\
\circ \ar@<0.25ex>[r]^x \ar@<0.25ex>[ur]^y & \circ \ar@<0.25ex>[l]^z \ar@<0.25ex>[ul]^w
}&\xymatrix{
 & \circ \ar@<0.25ex>[dr]^y \ar@<0.25ex>[dl]^w \ar@(ul,ur)^{xz} \\
\circ \ar@<0.25ex>[r]^x \ar@<0.25ex>[ur]^y & \circ \ar@<0.25ex>[r]^x \ar@<0.25ex>[l]^z \ar@(dr,dl)^{yw} & \circ \ar@<0.25ex>[l]^z \ar@<0.25ex>[ul]^w
}\end{array}\]
\end{Ex}

Next, we see a $4$-dimensional example.

\begin{Ex}
Put $G:=\mathbb{Z}, \vec{x}=\vec{y}=1, \vec{z}=2, \vec{u}=-1,\vec{v}=-3\in G$. We view $S:=k[x,y,z,u,v]$ as a $G$-graded $k$-algebra and define $R:=S_0=k[xu,yu,zu^2,x^3v,x^2yv,xy^2v,y^3v,z^3v^2,xzv,yzv,z^2uv]$. If we equip $H=G$ with our partial order, the quiver of $G$ becomes the following.
\[\xymatrix{
\cdots \ar@3[r]^{x,y}_{-u} \ar@/^18pt/[rr]^z \ar@/^-18pt/[rrr]_{-v} & \circ \ar@3[r]^{x,y}_{-u} \ar@/^18pt/[rr]^z \ar@/^-18pt/[rrr]_{-v} & \circ \ar@3[r]^{x,y}_{-u} \ar@/^18pt/[rr]^z \ar@/^-18pt/[rrr]_{-v} & \circ \ar@3[r]^{x,y}_{-u} \ar@/^18pt/[rr]^z \ar@/^-18pt/[rrr]_{-v} & \circ \ar@3[r]^{x,y}_{-u} \ar@/^18pt/[rr]^z & \circ \ar@3[r]^{x,y}_{-u} & \cdots
}\]
Then there is the following just one kind of non-trivial upper sets in $G$ up to translations.
\[\xymatrix{
\circ \ar@3[r]^{x,y}_{-u} \ar@/^18pt/[rr]^z \ar@/^-18pt/[rrr]_{-v} & \circ \ar@3[r]^{x,y}_{-u} \ar@/^18pt/[rr]^z \ar@/^-18pt/[rrr]_{-v} & \circ \ar@3[r]^{x,y}_{-u} \ar@/^18pt/[rr]^z \ar@/^-18pt/[rrr]_{-v} & \circ \ar@3[r]^{x,y}_{-u} \ar@/^18pt/[rr]^z & \circ \ar@3[r]^{x,y}_{-u} & \cdots
}\]
Remark that $\vec{s}=4$ holds. Here, the natural homomorphism $L\oplus L'=\mathbb{Z}^3\to G/\mathbb{Z}\vec{s}=\mathbb{Z}/4\mathbb{Z}$ sends $(a,b,c)$ to $a+b+c$. Thus $B=\{(a,b,c)\in L\oplus L'\mid a+b+c\in4\mathbb{Z}\}$ holds. If we let $\widehat{C}$ be the corresponding cut of $\widehat{Q}$ to the upper set in $G$, then $\widehat{Q}(\widehat{C})$ is as follows. Here, the black vertices correspond to elements of $B$.

\begin{center}
\begin{tikzpicture}[
  x={(2.5cm,0cm)},
  y={(-1.5cm,1cm)},
  z={(0cm,2.5cm)},
  >={Stealth[length=2.2mm,width=1.8mm]},
  line cap=round,
  line join=round,
  edge/.style={->, line width=0.95pt},
  rededge/.style={->, red!85!black, line width=1.35pt},
  vertex/.style={circle, fill=black, inner sep=1.25pt},
  every node/.style={font=\scriptsize}
]

\def\N{4}

\foreach \i in {0,...,\N}{
  \foreach \j in {0,...,2}{
    \foreach \k in {0,...,\N}{
      \coordinate (\i-\j-\k) at (\i,\j,\k);
    }
  }
}

\foreach \i in {0,...,\N}{
  \foreach \k in {0,...,\N}{
    \pgfmathtruncatemacro{\labf}{mod(\i+2-\k+40,4)}
    
    \ifnum\labf=0
      \ifnum\i<\N
        \pgfmathtruncatemacro{\ip}{\i+1}
        \draw[->, thick] ($(\i-2-\k)+(0.04,0,0)$) -- ($(\ip-2-\k)+(-0.04,0,0)$);
      \fi
      \ifnum\i>0
      \ifnum\j>0
        \draw[->, thick] ($(\i-2-\k)+(-0.04,-0.04,0)$) -- ($(\i-2-\k)+(-0.96,-0.96,0)$);
      \fi
      \fi
    \fi
    
    \ifnum\labf=1
      \ifnum\i<\N
        \pgfmathtruncatemacro{\ip}{\i+1}
        \draw[->, thick] ($(\i-2-\k)+(0.04,0,0)$) -- ($(\ip-2-\k)+(-0.04,0,0)$);
      \fi
      \ifnum\k<\N
        \pgfmathtruncatemacro{\kp}{\k+1}
        \draw[->, thick] ($(\i-2-\k)+(0,0,0.04)$) -- ($(\i-2-\kp)+(0,0,-0.04)$);
      \fi
      \ifnum\i>0
      \ifnum\j>0
        \draw[->, thick] ($(\i-2-\k)+(-0.04,-0.04,0)$) -- ($(\i-2-\k)+(-0.96,-0.96,0)$);
      \fi
      \fi
    \fi
    
    \ifnum\labf=2
      \ifnum\i<\N
        \pgfmathtruncatemacro{\ip}{\i+1}
        \draw[->, thick] ($(\i-2-\k)+(0.04,0,0)$) -- ($(\ip-2-\k)+(-0.04,0,0)$);
      \fi
      \ifnum\k<\N
        \pgfmathtruncatemacro{\kp}{\k+1}
        \draw[->, thick] ($(\i-2-\k)+(0,0,0.04)$) -- ($(\i-2-\kp)+(0,0,-0.04)$);
      \fi
      \ifnum\i>0
      \ifnum\k>0
        \draw[white, line width=6pt] ($(\i-2-\k)+(-0.15,-0.15,-0.15)$) -- ($(\i-2-\k)+(-0.85,-0.85,-0.85)$);
        \draw[->, thick] ($(\i-2-\k)+(-0.04,-0.04,-0.04)$) -- ($(\i-2-\k)+(-0.96,-0.96,-0.96)$);
      \fi
      \fi
    \fi
    
    \ifnum\labf=3
      \ifnum\k<\N
        \pgfmathtruncatemacro{\kp}{\k+1}
        \draw[->, thick] ($(\i-2-\k)+(0,0,0.04)$) -- ($(\i-2-\kp)+(0,0,-0.04)$);
      \fi
      \ifnum\k>0
        \pgfmathtruncatemacro{\kn}{\k-1}
        \draw[->, thick] ($(\i-2-\k)+(0,0,-0.04)$) -- ($(\i-2-\kn)+(0,0,0.04)$);
      \fi
    \fi
  }
}

\foreach \i in {0,...,\N}{
  \foreach \j in {1}{
    \foreach \k in {0,...,\N}{
      \pgfmathtruncatemacro{\labf}{mod(\i+\j-\k+40,4)}
      
      \ifnum\labf=0
        \ifnum\i<\N
          \pgfmathtruncatemacro{\ip}{\i+1}
          \draw[white, line width=6pt] ($(\i-\j-\k)+(0.05,0,0)$) -- ($(\ip-\j-\k)+(-0.05,0,0)$);
          \draw[->, thick] ($(\i-\j-\k)+(0.04,0,0)$) -- ($(\ip-\j-\k)+(-0.04,0,0)$);
        \fi
        \pgfmathtruncatemacro{\jp}{\j+1}
        \draw[white, line width=6pt] ($(\i-\j-\k)+(0,0.15,0)$) -- ($(\i-\jp-\k)+(0,-0.15,0)$);
        \draw[->, thick] ($(\i-\j-\k)+(0,0.04,0)$) -- ($(\i-\jp-\k)+(0,-0.04,0)$);
        \ifnum\i>0
        \ifnum\j>0
          \draw[->, thick] ($(\i-\j-\k)+(-0.04,-0.04,0)$) -- ($(\i-\j-\k)+(-0.96,-0.96,0)$);
        \fi
        \fi
      \fi
       
      \ifnum\labf=1
        \ifnum\i<\N
          \pgfmathtruncatemacro{\ip}{\i+1}
          \draw[white, line width=6pt] ($(\i-\j-\k)+(0.05,0,0)$) -- ($(\ip-\j-\k)+(-0.05,0,0)$);
          \draw[->, thick] ($(\i-\j-\k)+(0.04,0,0)$) -- ($(\ip-\j-\k)+(-0.04,0,0)$);
        \fi
        \ifnum\k<\N
          \pgfmathtruncatemacro{\kp}{\k+1}
          \draw[white, line width=6pt] ($(\i-\j-\k)+(0,0,0.2)$) -- ($(\i-\j-\kp)+(0,0,-0.05)$);
          \draw[->, thick] ($(\i-\j-\k)+(0,0,0.04)$) -- ($(\i-\j-\kp)+(0,0,-0.04)$);
        \fi
        \pgfmathtruncatemacro{\jp}{\j+1}
        \draw[white, line width=6pt] ($(\i-\j-\k)+(0,0.15,0)$) -- ($(\i-\jp-\k)+(0,-0.15,0)$);
        \draw[->, thick] ($(\i-\j-\k)+(0,0.04,0)$) -- ($(\i-\jp-\k)+(0,-0.04,0)$);
        \ifnum\i>0
        \ifnum\j>0
          \draw[->, thick] ($(\i-\j-\k)+(-0.04,-0.04,0)$) -- ($(\i-\j-\k)+(-0.96,-0.96,0)$);
        \fi
        \fi
     \fi
      
      \ifnum\labf=2
        \ifnum\i<\N
          \pgfmathtruncatemacro{\ip}{\i+1}
          \draw[white, line width=6pt] ($(\i-\j-\k)+(0.1,0,0)$) -- ($(\ip-\j-\k)+(-0.05,0,0)$);
          \draw[->, thick] ($(\i-\j-\k)+(0.04,0,0)$) -- ($(\ip-\j-\k)+(-0.04,0,0)$);
        \fi
        \ifnum\k<\N
          \pgfmathtruncatemacro{\kp}{\k+1}
          \draw[white, line width=6pt] ($(\i-\j-\k)+(0,0,0.1)$) -- ($(\i-\j-\kp)+(0,0,-0.05)$);
          \draw[->, thick] ($(\i-\j-\k)+(0,0,0.04)$) -- ($(\i-\j-\kp)+(0,0,-0.04)$);
        \fi
        \pgfmathtruncatemacro{\jp}{\j+1}
        \draw[white, line width=6pt] ($(\i-\j-\k)+(0,0.15,0)$) -- ($(\i-\jp-\k)+(0,-0.15,0)$);
        \draw[->, thick] ($(\i-\j-\k)+(0,0.04,0)$) -- ($(\i-\jp-\k)+(0,-0.04,0)$);
        \ifnum\i>0
        \ifnum\k>0
          \draw[white, line width=6pt] ($(\i-\j-\k)+(-0.15,-0.15,-0.15)$) -- ($(\i-\j-\k)+(-0.85,-0.85,-0.85)$);
          \draw[->, thick] ($(\i-\j-\k)+(-0.04,-0.04,-0.04)$) -- ($(\i-\j-\k)+(-0.96,-0.96,-0.96)$);
        \fi
        \fi
      \fi
          
      \ifnum\labf=3
        \ifnum\k<\N
          \pgfmathtruncatemacro{\kp}{\k+1}
          \draw[white, line width=6pt] ($(\i-\j-\k)+(0,0,0.1)$) -- ($(\i-\j-\kp)+(0,0,-0.05)$);
          \draw[->, thick] ($(\i-\j-\k)+(0,0,0.04)$) -- ($(\i-\j-\kp)+(0,0,-0.04)$);
        \fi
        \ifnum\k>0
          \pgfmathtruncatemacro{\kn}{\k-1}
          \draw[white, line width=6pt] ($(\i-\j-\k)+(0,0,-0.05)$) -- ($(\i-\j-\kn)+(0,0,0.1)$);
          \draw[->, thick] ($(\i-\j-\k)+(0,0,-0.04)$) -- ($(\i-\j-\kn)+(0,0,0.04)$);
        \fi
      \fi
     }
  }
}

\foreach \i in {0,...,\N}{
  \foreach \j in {0}{
    \foreach \k in {0,...,\N}{
      \pgfmathtruncatemacro{\labf}{mod(\i+\j-\k+40,4)}
      
      \ifnum\labf=0
        \ifnum\i<\N
          \pgfmathtruncatemacro{\ip}{\i+1}
          \draw[white, line width=6pt] ($(\i-\j-\k)+(0.05,0,0)$) -- ($(\ip-\j-\k)+(-0.05,0,0)$);
          \draw[->, thick] ($(\i-\j-\k)+(0.04,0,0)$) -- ($(\ip-\j-\k)+(-0.04,0,0)$);
        \fi
        \pgfmathtruncatemacro{\jp}{\j+1}
        \draw[white, line width=6pt] ($(\i-\j-\k)+(0,0.15,0)$) -- ($(\i-\jp-\k)+(0,-0.15,0)$);
        \draw[->, thick] ($(\i-\j-\k)+(0,0.04,0)$) -- ($(\i-\jp-\k)+(0,-0.04,0)$);
       \fi
       
      \ifnum\labf=1
        \ifnum\i<\N
          \pgfmathtruncatemacro{\ip}{\i+1}
          \draw[white, line width=6pt] ($(\i-\j-\k)+(0.05,0,0)$) -- ($(\ip-\j-\k)+(-0.05,0,0)$);
          \draw[->, thick] ($(\i-\j-\k)+(0.04,0,0)$) -- ($(\ip-\j-\k)+(-0.04,0,0)$);
        \fi
        \ifnum\k<\N
          \pgfmathtruncatemacro{\kp}{\k+1}
          \draw[white, line width=6pt] ($(\i-\j-\k)+(0,0,0.2)$) -- ($(\i-\j-\kp)+(0,0,-0.05)$);
          \draw[->, thick] ($(\i-\j-\k)+(0,0,0.04)$) -- ($(\i-\j-\kp)+(0,0,-0.04)$);
        \fi
        \pgfmathtruncatemacro{\jp}{\j+1}
        \draw[white, line width=6pt] ($(\i-\j-\k)+(0,0.15,0)$) -- ($(\i-\jp-\k)+(0,-0.15,0)$);
        \draw[->, thick] ($(\i-\j-\k)+(0,0.04,0)$) -- ($(\i-\jp-\k)+(0,-0.04,0)$);
      \fi
      
      \ifnum\labf=2
        \ifnum\i<\N
          \pgfmathtruncatemacro{\ip}{\i+1}
          \draw[white, line width=6pt] ($(\i-\j-\k)+(0.1,0,0)$) -- ($(\ip-\j-\k)+(-0.05,0,0)$);
          \draw[->, thick] ($(\i-\j-\k)+(0.04,0,0)$) -- ($(\ip-\j-\k)+(-0.04,0,0)$);
        \fi
        \ifnum\k<\N
          \pgfmathtruncatemacro{\kp}{\k+1}
          \draw[white, line width=6pt] ($(\i-\j-\k)+(0,0,0.1)$) -- ($(\i-\j-\kp)+(0,0,-0.05)$);
          \draw[->, thick] ($(\i-\j-\k)+(0,0,0.04)$) -- ($(\i-\j-\kp)+(0,0,-0.04)$);
        \fi
        \pgfmathtruncatemacro{\jp}{\j+1}
        \draw[white, line width=6pt] ($(\i-\j-\k)+(0,0.15,0)$) -- ($(\i-\jp-\k)+(0,-0.15,0)$);
        \draw[->, thick] ($(\i-\j-\k)+(0,0.04,0)$) -- ($(\i-\jp-\k)+(0,-0.04,0)$);
      \fi
          
      \ifnum\labf=3
        \ifnum\k<\N
          \pgfmathtruncatemacro{\kp}{\k+1}
          \draw[white, line width=6pt] ($(\i-\j-\k)+(0,0,0.1)$) -- ($(\i-\j-\kp)+(0,0,-0.05)$);
          \draw[->, thick] ($(\i-\j-\k)+(0,0,0.04)$) -- ($(\i-\j-\kp)+(0,0,-0.04)$);
        \fi
        \ifnum\k>0
          \pgfmathtruncatemacro{\kn}{\k-1}
          \draw[white, line width=6pt] ($(\i-\j-\k)+(0,0,-0.05)$) -- ($(\i-\j-\kn)+(0,0,0.1)$);
          \draw[->, thick] ($(\i-\j-\k)+(0,0,-0.04)$) -- ($(\i-\j-\kn)+(0,0,0.04)$);
        \fi
      \fi
     }
  }
}

\foreach \i in {0,...,\N}{
  \foreach \j in {0,...,2}{
    \foreach \k in {0,...,\N}{
    \pgfmathtruncatemacro{\labf}{mod(\i+\j-\k+40,4)}
      \ifnum\labf=0
        \node at (\i-\j-\k) {$\bullet$};
      \else
        \node at (\i-\j-\k) {$\circ$};
      \fi
    }
  }
}

\end{tikzpicture}
\end{center}

Here, each arrow of each direction represents the following monomials.

\begin{center}
\begin{tikzpicture}[
  x={(2.5cm,0cm)},
  y={(-1.5cm,1cm)},
  z={(0cm,2.5cm)},
  >={Stealth[length=2.2mm,width=1.8mm]},
  line cap=round,
  line join=round,
  edge/.style={->, line width=0.95pt},
  rededge/.style={->, red!85!black, line width=1.35pt},
  vertex/.style={circle, fill=black, inner sep=1.25pt},
  every node/.style={font=\scriptsize}
]

\node at (0,0,0) {$\circ$};

\draw[->, thick] (0.04,0,0) -- node[midway,above] {$x$} (0.96,0,0);
\draw[->, thick] (0,0.04,0) -- node[midway,above] {$y$} (0,0.96,0);
\draw[->, thick] (-0.04,-0.04,0) -- node[midway,above] {$z$} (-0.96,-0.96,0);
\draw[->, thick] (0,0,0.04) -- node[midway,right] {$u$} (0,0,0.96);
\draw[->, thick] (0,0,-0.04) -- node[midway,right] {$v$} (0,0,-0.96);
\draw[->, thick] (-0.04,-0.04,-0.04) -- node[midway,left] {$zv$} (-0.96,-0.96,-0.96);
\end{tikzpicture}
\end{center}

Therefore there is just one kind of toric NCCRs up to translations with the following quiver $Q(C)$.
\[\xymatrix{
\circ \ar@2@/^5pt/[r]^{x,y} \ar@/^18pt/[rr]^z & \circ \ar@2@/^5pt/[r]^{x,y} \ar@/^18pt/[rr]^z \ar@/^5pt/[l]^u & \circ \ar@2@/^5pt/[r]^{x,y} \ar@2@/^5pt/[l]^{u,zv} & \circ \ar@/^5pt/[l]^u \ar@/^18pt/[lll]^v
}\]

\end{Ex}

Finally, we see an example with $l+l'<d+2$, to which we cannot assign a dimer model.

\begin{Ex}
Put $G:=\mathbb{Z}\oplus(\mathbb{Z}/4\mathbb{Z}), \vec{x}=(1,0), \vec{y}=(1,1), \vec{z}=(-1,0), \vec{w}=(-1,3), \vec{u}=(0,2)\in G$. We view $S:=k[x,y,z,w,u]$ as a $G$-graded $k$-algebra and define $R:=S_0=k[xz,x^4w^4,y^4z^4,yw,x^2w^2u,y^2z^2u,u^2]$. If we equip $H:=G/\mathbb{Z}\vec{u}\cong\mathbb{Z}\oplus(\mathbb{Z}/2\mathbb{Z})$ with our partial order, the quiver of $H$ becomes the following. Remark that our $H$ coincides with that of Example \ref{extors}.
\[\xymatrix{
\cdots \ar@2[r]^x_{-z} \ar@2[dr]^y_{-w} & \circ \ar@2[r]^x_{-z} \ar@2[dr]^y_{-w}  & \circ \ar@2[r]^x_{-z} \ar@2[dr]^y_{-w}  & \circ \ar@2[r]^x_{-z} \ar@2[dr]^y_{-w}  & \circ \ar@2[r]^x_{-z} \ar@2[dr]^y_{-w}  & \cdots \\
\cdots \ar@2[r]^x_{-z} \ar@2[ur]^y_{-w} & \circ \ar@2[r]^x_{-z} \ar@2[ur]^y_{-w} & \circ \ar@2[r]^x_{-z} \ar@2[ur]^y_{-w} & \circ \ar@2[r]^x_{-z} \ar@2[ur]^y_{-w} & \circ \ar@2[r]^x_{-z} \ar@2[ur]^y_{-w}  & \cdots
}\]
Then there are the following two kinds of non-trivial upper sets in $H$ up to translations.
\[\begin{array}{c c}
\xymatrix{
\circ \ar@2[r]^x_{-z} \ar@2[dr]^y_{-w} & \circ \ar@2[r]^x_{-z} \ar@2[dr]^y_{-w} & \circ \ar@2[r]^x_{-z} \ar@2[dr]^y_{-w} & \cdots \\
\circ \ar@2[r]^x_{-z} \ar@2[ur]^y_{-w} & \circ \ar@2[r]^x_{-z} \ar@2[ur]^y_{-w} & \circ \ar@2[r]^x_{-z} \ar@2[ur]^y_{-w} & \cdots
}&\xymatrix{
 & \circ \ar@2[r]^x_{-z} \ar@2[dr]^y_{-w}  & \circ \ar@2[r]^x_{-z} \ar@2[dr]^y_{-w}  & \cdots \\
\circ \ar@2[r]^x_{-z} \ar@2[ur]^y_{-w} & \circ \ar@2[r]^x_{-z} \ar@2[ur]^y_{-w} & \circ \ar@2[r]^x_{-z} \ar@2[ur]^y_{-w}  & \cdots
}\end{array}\]
Remark that $p=(2,1)$ holds. Therefore there are two kinds of toric NCCRs up to translations with the following quivers. Observe that by mutations of non-trivial upper sets in $H$, they are mutated to each other; this corresponds to Iyama--Wemyss mutations.
\[\begin{array}{c c}
\xymatrix{
\circ \ar@<0.25ex>[r]^x \ar@<0.25ex>[dr]^y \ar@<0.25ex>@/^-12pt/[dd]^u & \circ \ar@<0.25ex>[l]^z \ar@<0.25ex>[dddl]^w \ar@<0.25ex>@/^12pt/[dd]^u \\
\circ \ar@<0.25ex>[r]^x \ar@<0.25ex>[dr]^y \ar@<0.25ex>@/^-12pt/[dd]^u & \circ \ar@<0.25ex>[l]^z \ar@<0.25ex>[ul]^w \ar@<0.25ex>@/^12pt/[dd]^u \\
\circ \ar@<0.25ex>[r]^x \ar@<0.25ex>[dr]^y \ar@<0.25ex>@/^12pt/[uu]^u & \circ \ar@<0.25ex>[l]^z \ar@<0.25ex>[ul]^w \ar@<0.25ex>@/^-12pt/[uu]^u \\
\circ \ar@<0.25ex>[r]^x \ar@<0.25ex>[uuur]^y \ar@<0.25ex>@/^12pt/[uu]^u & \circ \ar@<0.25ex>[l]^z \ar@<0.25ex>[ul]^w \ar@<0.25ex>@/^-12pt/[uu]^u
}&\xymatrix{
 & \circ \ar@<0.25ex>[dr]^y \ar@<0.25ex>[dddl]^w \ar@(ul,ur)^{xz} \ar@<0.25ex>@/^-12pt/[dd]^u \\
\circ \ar@<0.25ex>[r]^x \ar@<0.25ex>[dr]^y \ar@<0.25ex>[dd]^u & \circ \ar@<0.25ex>[r]^x \ar@<0.25ex>[l]^z \ar@(r,d)^{yw}\ar@<0.25ex>@/^12pt/[dd]^u & \circ \ar@<0.25ex>[l]^z \ar@<0.25ex>[ul]^w \ar@<0.25ex>[dd]^u  \\
 & \circ \ar@<0.25ex>[dr]^y \ar@<0.25ex>[ul]^w \ar@(d,l)^{xz} \ar@<0.25ex>@/^12pt/[uu]^u \\
\circ \ar@<0.25ex>[r]^x \ar@<0.25ex>[uuur]^y \ar@<0.25ex>[uu]^u & \circ \ar@<0.25ex>[r]^x \ar@<0.25ex>[l]^z \ar@(dr,dl)^{yw}\ar@<0.25ex>@/^-12pt/[uu]^u & \circ \ar@<0.25ex>[l]^z \ar@<0.25ex>[ul]^w \ar@<0.25ex>[uu]^u
}\end{array}\]
\end{Ex}

\begin{appendix}

\section{NCCRs of toric singularities with torsion divisor class groups}\label{apptorsion}

In this appendix, we give a proof of Theorem \ref{abelNCCR} stating that any Gorenstein toric singularity $R$ with torsion divisor class group has a unique toric NCCR. Moreover, we show that the quiver with relations of this NCCR is obtained by a higher-dimensional analogue of dimer models introduced by \cite{HIO}. This complements our main results for the case of divisor class group of rank one.

\begin{Rem}
The existence of an NCCR is well-known when $k$ is an algebraically closed field of characteristic zero since such $R$ is an abelian quotient singularity \cite{Iya07a,LW}. In this case, the quiver of the NCCR is understood as the McKay quiver. Recently, a method to deal with quotient singularities over a non-algebraically closed field has been developed by the author \cite{Tom24}, but there are some restrictions on the characteristic of $k$. Our proof below does not require any conditions on the base field $k$ at all.
\end{Rem}

Let $e_i\in\mathbb{Z}^{d+1}$ be the $i$-th unit vector for $0\leq i\leq d$. Put $\alpha_i:=e_i-e_{i-1}$ for $1\leq i\leq d$ and $\alpha_0:=e_0-e_d$. Let $L:=\{v=(v_i)_{i=0}^d\in\mathbb{Z}^{d+1}\mid\sum_{i=0}^dv_i=0\}=\sum_{i=0}^d\mathbb{Z}\alpha_i\subseteq\mathbb{Z}^{d+1}$ be a $d$-dimensional lattice.

First, we have the following bijections. Compare this with Proposition \ref{3corr}.

\begin{Prop}\label{BGcorr1}
For $d\ge1$, we have a bijection between the following sets.
\begin{enumerate}
\item $\{B\subseteq L\colon\text{cofinite subgroup}\}$
\item $\{(G,(\vec{x}_i)_{i=0}^d)\mid G\text{ is a finite abelian group},\vec{x}_i\in G, G=\sum_{i=0}^d\mathbb{Z}\vec{x}_i,\sum_{i=0}^d\vec{x}_i=0\}/\cong$
\item $\{d$-dimensional lattice simplices$\}/\mathbb{Z}^d\rtimes GL_d(\mathbb{Z})$
\end{enumerate}
Here, in (2), we write $(G^1,(\vec{x}^1_i)_i)\cong(G^2,(\vec{x}_i^2)_i)$ if there exists a group isomorphism $G^1\cong G^2$ sending $\vec{x}^1_i$ to $\vec{x}^2_i$.
\end{Prop}
\begin{proof}
For a cofinite subgroup $B\subseteq L$, put $G:=L/B$. We let $\vec{x}_i:=\alpha_i+B\in G$ for $0\leq i\leq d$. This gives a bijection between (1) and (2).
\end{proof}

In the notation of Proposition \ref{BGcorr1}, let
\[Q:=(G,\bigsqcup_{i=0}^d\{\vec{g}\to\vec{g}+\vec{x}_i\mid\vec{g}\in G\})\]
be a finite quiver which may have multiple arrows. Observe that $Q$ has a Galois covering $\widehat{Q}:=(L,\{x\to x+\alpha_i\mid x\in L, 0\leq i\leq d\})$. Consider the relation $I$ in the path algebra $kQ$ generated by
\[(\vec{g}\to\vec{g}+\vec{x}_i\to\vec{g}+\vec{x}_i+\vec{x}_j)=(\vec{g}\to\vec{g}+\vec{x}_j\to\vec{g}+\vec{x}_i+\vec{x}_j)\]
for $\vec{g}\in G$ and $0\leq i,j\leq d$. Observe that $I\subseteq kQ_{\ge2}$ holds. We put
\[\Gamma(B):=kQ/I.\]

\begin{Thm}\label{abelNCCR}
Let $G$ be a finite abelian group and $\vec{x}_0,\cdots,\vec{x}_d\in G$ elements satisfying $G=\sum_{i=0}^d\mathbb{Z}\vec{x}_i$ and $\sum_{i=0}^d\vec{x}_i=0$. We view the polynomial ring $S=k[x_0,\cdots,x_d]$ as a $G$-graded $k$-algebra by $\deg x_i=\vec{x}_i$.
\begin{enumerate}
\item $R:=S_0$ is a $(d+1)$-dimensional Gorenstein normal domain and $S\in\CM R$ gives an NCCR.
\item Let $B\subseteq L$ be the cofinite subgroup obtained by Proposition \ref{BGcorr1}. Then we have
\[\End_R(S)\cong\Gamma(B).\]
\end{enumerate}
\end{Thm}
\begin{proof}
(1) Put $\Gamma:=\End_R(S)$. Then by Proposition \ref{refhom}, we have $\Gamma\cong[S_{\vec{g}-\vec{h}}]_{\vec{g},\vec{h}\in G}$. Thus $\Gamma\in\CM R$ holds. In addition, observe that $\Hom_S^G(\bigoplus_{\vec{g}\in G}S(\vec{g}),-)\colon\mod^G\!S\to\mod\Gamma$ is a categorical equivalence. Thus we obtain $\gl\Gamma<\infty$.

(2) The homomorphism $\Gamma(B)\to\Gamma$ of $k$-algebra sending $e_{\vec{g}}$ to ${\rm id}_{S_{\vec{g}}}\in\End_R(S_{\vec{g}})$ and $(\vec{g}\to\vec{g}+\vec{x}_i)$ to $x_i\cdot-\in\Hom_R(S_{\vec{g}},S_{\vec{g}+\vec{x}_i})$ is an isomorphism.
\end{proof}

Although this result is well-known (when $k$ is algebraically closed with $\ch k=0$), we give an example of $\widehat{Q}$ so that we can verify that our argument does not use group representations.

\begin{Ex}
Put $G:=(\mathbb{Z}/2\mathbb{Z})^{\oplus3}, \vec{x}=(1,0,0), \vec{y}=(0,1,0), \vec{z}=(0,0,1), \vec{w}=(1,1,1)\in G$. We view $S:=k[x,y,z,w]$ as a $G$-graded $k$-algebra and define $R:=S_0=k[x^2,y^2,z^2,w^2,xyzw]$. Here, the natural homomorphism $L=\mathbb{Z}^3\to G$ sends $(a,b,c)$ to $(a,b,c)$. Thus $B=\{(a,b,c)\in L\mid a,b,c\in2\mathbb{Z}\}$ holds. Then the quiver $\widehat{Q}$ becomes as follows. Here, the black vertices correspond to elements of $B$.

\begin{center}
\begin{tikzpicture}[
  x={(2.5cm,0cm)},
  y={(-1.5cm,1cm)},
  z={(0cm,2.5cm)},
  >={Stealth[length=2.2mm,width=1.8mm]},
  line cap=round,
  line join=round,
  edge/.style={->, line width=0.95pt},
  rededge/.style={->, red!85!black, line width=1.35pt},
  vertex/.style={circle, fill=black, inner sep=1.25pt},
  every node/.style={font=\scriptsize}
]

\def\N{4}

\foreach \i in {0,...,\N}{
  \foreach \j in {0,...,2}{
    \foreach \k in {0,...,\N}{
      \coordinate (\i-\j-\k) at (\i,\j,\k);
    }
  }
}

\foreach \i in {0,...,\N}{
  \foreach \j in {2}{
    \foreach \k in {0,...,\N}{
      \ifnum\i<\N
        \draw[->, thick] ($(\i-\j-\k)+(0.04,0,0)$) -- ($(\i-\j-\k)+(0.96,0,0)$);
      \fi
      \ifnum\k<\N
        \draw[->, thick] ($(\i-\j-\k)+(0,0,0.04)$) -- ($(\i-\j-\k)+(0,0,0.96)$);
      \fi
      \ifnum\i>0
      \ifnum\k>0
        \draw[white, line width=6pt] ($(\i-\j-\k)+(-0.12,-0.12,-0.12)$) -- ($(\i-\j-\k)+(-0.96,-0.96,-0.96)$);
        \draw[->, thick] ($(\i-\j-\k)+(-0.04,-0.04,-0.04)$) -- ($(\i-\j-\k)+(-0.96,-0.96,-0.96)$);
      \fi
      \fi
    }
  }
}

\foreach \i in {0,...,\N}{
  \foreach \j in {1}{
    \foreach \k in {0,...,\N}{
      \ifnum\i<\N
        \draw[white, line width=6pt] ($(\i-\j-\k)+(0.12,0,0)$) -- ($(\i-\j-\k)+(0.96,0,0)$);
        \draw[->, thick] ($(\i-\j-\k)+(0.04,0,0)$) -- ($(\i-\j-\k)+(0.96,0,0)$);
      \fi
      \draw[white, line width=6pt] ($(\i-\j-\k)+(0,0.2,0)$) -- ($(\i-\j-\k)+(0,0.88,0)$);
      \draw[->, thick] ($(\i-\j-\k)+(0,0.04,0)$) -- ($(\i-\j-\k)+(0,0.96,0)$);
      \ifnum\k<\N
        \draw[white, line width=6pt] ($(\i-\j-\k)+(0,0,0.2)$) -- ($(\i-\j-\k)+(0,0,0.9)$);
        \draw[->, thick] ($(\i-\j-\k)+(0,0,0.04)$) -- ($(\i-\j-\k)+(0,0,0.96)$);
      \fi
      \ifnum\i>0
      \ifnum\k>0
        \draw[white, line width=6pt] ($(\i-\j-\k)+(-0.12,-0.12,-0.12)$) -- ($(\i-\j-\k)+(-0.96,-0.96,-0.96)$);
        \draw[->, thick] ($(\i-\j-\k)+(-0.04,-0.04,-0.04)$) -- ($(\i-\j-\k)+(-0.96,-0.96,-0.96)$);
      \fi
      \fi
    }
  }
}

\foreach \i in {0,...,\N}{
  \foreach \j in {0}{
    \foreach \k in {0,...,\N}{
      \ifnum\i<\N
        \draw[white, line width=6pt] ($(\i-\j-\k)+(0.12,0,0)$) -- ($(\i-\j-\k)+(0.96,0,0)$);
        \draw[->, thick] ($(\i-\j-\k)+(0.04,0,0)$) -- ($(\i-\j-\k)+(0.96,0,0)$);
      \fi
      \draw[white, line width=6pt] ($(\i-\j-\k)+(0,0.2,0)$) -- ($(\i-\j-\k)+(0,0.88,0)$);
      \draw[->, thick] ($(\i-\j-\k)+(0,0.04,0)$) -- ($(\i-\j-\k)+(0,0.96,0)$);
      \ifnum\k<\N
        \draw[white, line width=6pt] ($(\i-\j-\k)+(0,0,0.2)$) -- ($(\i-\j-\k)+(0,0,0.9)$);
        \draw[->, thick] ($(\i-\j-\k)+(0,0,0.04)$) -- ($(\i-\j-\k)+(0,0,0.96)$);
      \fi
    }
  }
}

\foreach \i in {0,...,\N}{
  \foreach \j in {0,...,2}{
    \foreach \k in {0,...,\N}{
    \pgfmathtruncatemacro{\labf}{mod((\i+1)*(\j+1)*(\k+1),2)}
      \ifnum\labf=1
        \node at (\i-\j-\k) {$\bullet$};
      \else
        \node at (\i-\j-\k) {$\circ$};
      \fi
    }
  }
}
\end{tikzpicture}
\end{center}

Here, each arrow of each direction represents the following variables.

\begin{center}
\begin{tikzpicture}[
  x={(2.5cm,0cm)},
  y={(-1.5cm,1cm)},
  z={(0cm,2.5cm)},
  >={Stealth[length=2.2mm,width=1.8mm]},
  line cap=round,
  line join=round,
  edge/.style={->, line width=0.95pt},
  rededge/.style={->, red!85!black, line width=1.35pt},
  vertex/.style={circle, fill=black, inner sep=1.25pt},
  every node/.style={font=\scriptsize}
]

\node at (0,0,0) {$\circ$};

\draw[->, thick] (0.04,0,0) -- node[midway,above] {$x$} (0.96,0,0);
\draw[->, thick] (0,0.04,0) -- node[midway,above] {$y$} (0,0.96,0);
\draw[->, thick] (0,0,0.04) -- node[midway,right] {$z$} (0,0,0.96);
\draw[->, thick] (-0.04,-0.04,-0.04) -- node[midway,right] {$w$} (-0.96,-0.96,-0.96);
\end{tikzpicture}
\end{center}

Therefore the quiver $Q$ of the toric NCCR is as follows, where each segment represents a double arrow.

\begin{center}
\begin{tikzpicture}[
  x={(2.5cm,0cm)},
  y={(-1.5cm,1cm)},
  z={(0cm,2.5cm)},
  >={Stealth[length=2.2mm,width=1.8mm]},
  line cap=round,
  line join=round,
  edge/.style={->, line width=0.95pt},
  rededge/.style={->, red!85!black, line width=1.35pt},
  vertex/.style={circle, fill=black, inner sep=1.25pt},
  every node/.style={font=\scriptsize}
]

\foreach \i in {0,1}{
  \foreach \j in {0,1}{
    \foreach \k in {0,1}{
      \coordinate (\i-\j-\k) at (\i,\j,\k);
    }
  }
}

\draw[-, thick] (0.04,1,0) -- node[midway,below] {$x$} (0.96,1,0);
\draw[-, thick] (0.04,1,1) -- node[midway,above] {$x$} (0.96,1,1);
\draw[-, thick] (0,1,0.04) -- node[midway,left] {$z$} (0,1,0.96);
\draw[-, thick] (1,1,0.04) -- node[midway,right] {$z$} (1,1,0.96);

\draw[-, thick] (0,0.04,0) -- node[midway,left] {$y$} (0,0.96,0);
\draw[-, thick] (1,0.04,0) -- node[midway,left] {$y$} (1,0.96,0);
\draw[-, thick] (0,0.04,1) -- node[midway,right] {$y$} (0,0.96,1);
\draw[-, thick] (1,0.04,1) -- node[midway,right] {$y$} (1,0.96,1);

\draw[white, line width=5pt] (0.1,0.1,0.1) -- (0.9,0.9,0.9);
\draw[white, line width=5pt] (0.8,0.2,0.2) -- (0.2,0.8,0.8);
\draw[white, line width=5pt] (0.96,0.04,0.96) -- (0.1,0.9,0.1);
\draw[-, thick] (0.04,0.04,0.04) -- node[midway,right] {$w$} (0.96,0.96,0.96);
\draw[-, thick] (0.96,0.04,0.04) -- (0.04,0.96,0.96);
\draw[-, thick] (0.04,0.04,0.96) -- (0.96,0.96,0.04);
\draw[-, thick] (0.96,0.04,0.96) -- (0.04,0.96,0.04);

\draw[-, thick] (0.04,0,0) -- node[midway,below] {$x$} (0.96,0,0);
\draw[white, line width=5pt] (0.1,0,1) -- (0.88,0,1);
\draw[-, thick] (0.04,0,1) -- node[midway,above] {$x$} (0.96,0,1);
\draw[white, line width=5pt] (0,0,0.15) -- (0,0,0.9);
\draw[-, thick] (0,0,0.04) -- node[midway,left] {$z$} (0,0,0.96);
\draw[-, thick] (1,0,0.04) -- node[midway,right] {$z$} (1,0,0.96);

\foreach \i in {0,1}{
  \foreach \j in {0,1}{
    \foreach \k in {0,1}{
      \node at (\i-\j-\k) {$\circ$};
    }
  }
}

\end{tikzpicture}
\end{center}

\end{Ex}

\section{Volumes of $d$-dimensional lattice polytopes with $d+2$ vertices}\label{appvolume}

In this appendix, we give a formula for the volume of $d$-dimensional lattice polytopes with $d+2$ vertices (Theorem \ref{vol}).

Let $N\cong\mathbb{Z}^d$ be a $d$-dimensional lattice and $P\subseteq N_{\mathbb{R}}$ a $d$-dimensional lattice polytope with vertices $\{v_1,\cdots,v_{d+2}\}$. Put $\widetilde{N}:=N\oplus\mathbb{Z}$ and $\widetilde{v}_j:=(v_j,1)\in\widetilde{N}$. Let $\phi\colon\mathbb{Z}^{d+2}=\bigoplus_{j=1}^{d+2}\mathbb{Z}e_j\to\widetilde{N}$ be the group homomorphism sending $e_i$ to $\widetilde{v}_i$. Define $G:=\Cok\phi^*$.
\[0\to\widetilde{N}^*\xrightarrow{\phi^*}(\mathbb{Z}^{d+2})^*\to G\to0\]
Let $\vec{z}_j\in G$ be the image of the $j$-th unit vector of $(\mathbb{Z}^{d+2})^*$. Let $\pi\colon G\to G/G_{\rm tors}\cong\mathbb{Z}$. Define integers $l,l'\ge1$ so that $(G,(\vec{z}_j)_j)$ satisfies the conditions (G1), (G2) and (G3), where
\[\vec{z}_j=\left\{
\begin{array}{ll}
\vec{x}_{j-1} & (1\leq j\leq l+1)\\
\vec{x}'_{j-l-2} & (l+2\leq j\leq l+l'+2)\\
\vec{y}_{j-l-l'-2} & (l+l'+3\leq j\leq d+2)
\end{array}
\right..\]
Put $H:=G/(\sum_{i''=1}^{d-l-l'}\mathbb{Z}\vec{y}_{i''})$ and let $q\colon G\to H$ be the natural surjection. Let $s:=\sum_{i=0}^lq(\vec{x}_i)=-\sum_{i'=0}^{l'}q(\vec{x}'_{i'})\in H$.

Our main theorem in this appendix is the following.

\begin{Thm}\label{vol}
If we put $K:=\sum_{i''=1}^{d-l-l'}\mathbb{Z}\vec{y}_{i''}$, then we have
\[\vol(P)=\frac{1}{d!}|K||H/\mathbb{Z}s|.\]
\end{Thm}

First, we see that we may assume $G=H$.

\begin{Lem}
Assume $d>l+l'$. Put $P':=\Conv\{v_1,\cdots,v_{d+1}\}\subseteq P$. Then we have
\[\vol(P)=\frac{1}{d}\vol(P')\ord_G(\vec{z}_{d+2}).\]
\end{Lem}
\begin{proof}
We let $L:=\{f\in\widetilde{N}^*\mid f(\widetilde v_j)=0\ (1\le j\le d+1)\}\subseteq\widetilde{N}^*$. Then we have
\[\{f(\widetilde v_{d+2})\mid f\in L\}=\ord_G(\vec{z}_{d+2})\mathbb{Z}.\]
This implies the assertion.
\end{proof}

In what follows, we assume $G=H$, or equivalently $d=l+l'$. To ease the conventions, we put
\[w_i:=v_{i+1}\ (0\le i\le l)\text{ and }w'_{i'}:=v_{i'+l+2}\ (0\le i'\le l').\]

\begin{Lem}\label{divide}
For $0\leq i\leq l$, let $P_i\subseteq N_{\mathbb{R}}$ be the lattice simplex with vertices $\{w_0,\cdots,\widehat{w}_i,\cdots,w_l,w'_0,\cdots,w'_{l'}\}$.
\begin{enumerate}
\item We have
\[P=\bigcup_{i=0}^lP_i.\]
\item For $0\leq i_1<i_2\leq l$, we have
\[P_{i_1}\cap P_{i_2}=\Conv\{w_0,\cdots,\widehat{w}_{i_1},\cdots,\widehat{w}_{i_2},\cdots,w_l,w'_0,\cdots,w'_{l'}\}.\]
\end{enumerate}
\end{Lem}
\begin{proof}
(1) Note that there exist positive rational numbers $\lambda_i,\lambda'_{i'}$ such that
\[
\sum_{i=0}^l \lambda_i w_i
=
\sum_{i'=0}^{l'} \lambda'_{i'} w'_{i'}
\quad\text{and}\quad
\sum_{i=0}^l \lambda_i
=
\sum_{i'=0}^{l'} \lambda'_{i'}.
\]
(See \cite[5.4,5.5]{BH}.) Take $v=\sum_{i=0}^la_iw_i+\sum_{i'=0}^{l'}a'_{i'}w'_{i'}\in P$ where $a_i\ge0,a'_{i'}\ge0$ and $\sum_{i=0}^la_i+\sum_{i'=0}^{l'}a'_{i'}=1$.
Put
\[
t=\min_{0\le i\le l} \frac{a_i}{\lambda_i}
\]
and choose $i_0$ attaining this minimum. Put
\[
b_i=a_i-t\lambda_i,\qquad b'_{i'}=a'_{i'}+t\lambda'_{i'}.
\]
Then all $b_i,b'_{i'}$ are non-negative, $b_{i_0}=0$, and the above affine relation shows that
\[
v=\sum_{i=0}^l b_iw_i+\sum_{i'=0}^{l'}b'_{i'}w'_{i'}.
\]
Thus $v\in P_{i_0}$.

(2) Take $v=\sum_{i=0}^la_iw_i+\sum_{i'=0}^{l'}a'_{i'}w'_{i'}=\sum_{i=0}^lb_iw_i+\sum_{i'=0}^{l'}b'_{i'}w'_{i'}\in P_{i_1}\cap P_{i_2}$ where $a_i\ge0, a'_{i'}\ge0,b_i\ge0,b'_{i'}\ge0,\sum_{i=0}^la_i+\sum_{i'=0}^{l'}a'_{i'}=\sum_{i=0}^lb_i+\sum_{i'=0}^{l'}b'_{i'}=1$ and $a_{i_1}=b_{i_2}=0$. Observe that there exist integers $m_1,m_2>0$ such that $m_1\vec{x}_{i_1}=m_2\vec{x}_{i_2}\in G$ holds. This means that there exists $f\in\widetilde{N}^*$ such that $f(\widetilde{w}_i)=\left\{
\begin{array}{ll}
m_1 & (i=i_1)\\
-m_2 & (i=i_2)\\
0 & (i\neq i_1,i_2)
\end{array}
\right.$ and $f(\widetilde{w}'_{i'})=0$. Note that $\sum_{i=0}^la_i\widetilde{w}_i+\sum_{i'=0}^{l'}a'_{i'}\widetilde{w}'_{i'}=(v,1)=\sum_{i=0}^lb_i\widetilde{w}_i+\sum_{i'=0}^{l'}b'_{i'}\widetilde{w}'_{i'}$ holds. Thus we have
\[-m_2a_{i_2}=f(v,1)=m_1b_{i_1}.\]
This forces $a_{i_2}=b_{i_1}=0$.
\end{proof}

\begin{proof}
By Lemma \ref{divide}, $\vol(P)=\sum_{i=0}^l\vol(P_i)$ holds. Thus we have
\[d!\vol(P)=\sum_{i=0}^l|\det[\widetilde{w}_0,\cdots,\widehat{\widetilde{w}}_i,\cdots,\widetilde{w}_l,\widetilde{w}'_0,\cdots,\widetilde{w}'_{l'}]|.\]
Put $d_j:=(-1)^j\det[\widetilde{v}_1,\cdots,\widehat{\widetilde{v}}_j,\cdots,\widetilde{v}_{d+2}]$ for $1\leq j\leq d+2$. Then we have $\sum_{j=1}^{d+2}d_j\widetilde{v}_j=0$. For $0\leq i_1<i_2\leq l$, take $f\in\widetilde{N}^*$ as in the proof of Lemma \ref{divide}(2). Then we have
\[0=f(\sum_{j=1}^{d+2}d_j\widetilde{v}_j)=d_{i_1+1}m_1-d_{i_2+1}m_2.\]
This means that $d_{i_1+1}$ and $d_{i_2+1}$ have the same signs. Therefore we obtain
\begin{align*}
d!\vol(P)&=\sum_{j=1}^{l+1}|d_j| \\
&=|\sum_{j=1}^{l+1}d_j| \\
&=|\det[\widetilde{v}_2-\widetilde{v}_1,\cdots,\widetilde{v}_{l+1}-\widetilde{v}_l,\widetilde{v}_{l+2},\cdots,\widetilde{v}_{d+2}]|.
\end{align*}
Let $\iota\colon\mathbb{Z}^{d+1}=\bigoplus_{j=1}^{d+1}\mathbb{Z}e'_j\to\mathbb{Z}^{d+2}$ be the group homomorphism defined by
\[\iota(e'_j)=\left\{
\begin{array}{ll}
e_{j+1}-e_j & (1\leq j\leq l)\\
e_{j+1} & (l+1\leq j\leq d+1)
\end{array}
\right..\]
Put $\psi:=\phi\iota\colon\mathbb{Z}^{d+1}\to\widetilde{N}$. Then we can check that $\Cok\psi^*\cong G/\mathbb{Z}s$ holds by the following diagram.
\[\xymatrix{
 & & 0 \ar[d] & 0 \ar[d] \\
 & & \mathbb{Z} \ar@{=}[r] \ar[d] & \mathbb{Z} \ar[d] \\
0 \ar[r] & (\widetilde{N})^* \ar[r]^{\phi^*} \ar@{=}[d] & (\mathbb{Z}^{d+2})^* \ar[r] \ar[d]^{\iota^*} & G \ar[r] \ar[d] & 0 \\
0 \ar[r] & (\widetilde{N})^* \ar[r]^{\psi^*} & (\mathbb{Z}^{d+1})^* \ar[r] \ar[d] & G/\mathbb{Z}s \ar[r] \ar[d] & 0 \\
 & & 0 & 0
}\]
Thus we have
\begin{align*}
d!\vol(P)&=|\det[\widetilde{v}_2-\widetilde{v}_1,\cdots,\widetilde{v}_{l+1}-\widetilde{v}_l,\widetilde{v}_{l+2},\cdots,\widetilde{v}_{d+2}]| \\
&=|\det\psi| \\
&=|\det\psi^*| \\
&=|G/\mathbb{Z}s|.\qedhere
\end{align*}
\end{proof}

\end{appendix}

\bibliographystyle{alpha} 
\bibliography{reference}

\newcommand{\etalchar}[1]{$^{#1}$}
\begin{thebibliography}{BMR{\etalchar{+}}06}

\bibitem[AIR15]{AIR15}
Claire Amiot, Osamu Iyama, and Idun Reiten.
\newblock Stable categories of {C}ohen-{M}acaulay modules and cluster categories: {D}edicated to {R}agnar-{O}laf {B}uchweitz on the occasion of his sixtieth birthday.
\newblock {\em American Journal of Mathematics}, 137(3):813--857, 2015.

\bibitem[BH09]{BH}
Lev Borisov and Zheng Hua.
\newblock On the conjecture of {K}ing for smooth toric {D}eligne--{M}umford stacks.
\newblock {\em Advances in Mathematics}, 221(1):277--301, 2009.

\bibitem[BKR01]{BKR}
Tom Bridgeland, Alastair King, and Miles Reid.
\newblock The {M}c{K}ay correspondence as an equivalence of derived categories.
\newblock {\em Journal of the American Mathematical Society}, 14(3):535--554, 2001.

\bibitem[BMR{\etalchar{+}}06]{BMRRT}
Aslak~Bakke Buan, Bethany Marsh, Markus Reineke, Idun Reiten, and Gordana Todorov.
\newblock Tilting theory and cluster combinatorics.
\newblock {\em Advances in mathematics}, 204(2):572--618, 2006.

\bibitem[Bro12]{Bro}
Nathan Broomhead.
\newblock Dimer models and {C}alabi-{Y}au algebras.
\newblock {\em American Mathematical Society}, 215(1011):197--239, 2012.

\bibitem[CFG19]{CFG19}
Cyril Closset, Sebasti\'{a}n Franco, and Jirui Guo.
\newblock Graded quivers and {B}-branes at {C}alabi-{Y}au singularities.
\newblock {\em Journal of High Energy Physics}, 3:1--73, 2019.

\bibitem[DG24]{DG}
Darius Dramburg and Oleksandra Gasanova.
\newblock A classification of $n$-representation infinite algebras of type \~{A}.
\newblock arXiv:2409.06553, 2024.

\bibitem[FHV{\etalchar{+}}06]{FHKVW}
Sebasti\'{a}n Franco, Amihay Hanany, David Vegh, Brian Wecht, and Kristian~D. Kennaway.
\newblock Brane dimers and quiver gauge theories.
\newblock {\em Journal of High Energy Physics}, 01:096--096, 2006.

\bibitem[FLS16]{FLS16}
Sebasti\'{a}n Franco, Sangmin Lee, and Rak-Kyeong Seong.
\newblock Brane brick models, toric {C}alabi-{Y}au 4-folds and 2d (0, 2) quivers.
\newblock {\em Journal of High Energy Physics}, 2:1--67, 2016.

\bibitem[Han24]{Han24a}
Norihiro Hanihara.
\newblock Higher hereditary algebras and {C}alabi--{Y}au algebras arising from some toric singularities.
\newblock arXiv:2412.19040, 2024.

\bibitem[Han25]{Han25}
Norihiro Hanihara.
\newblock Non-commutative resolutions for {S}egre products and {C}ohen-{M}acaulay rings of hereditary representation type.
\newblock {\em Transactions of the American Mathematical Society}, 378(04):2429--2475, 2025.

\bibitem[Har17]{Har}
Wahei Hara.
\newblock Non-commutative crepant resolution of minimal nilpotent orbit closures of type {A} and {M}ukai flops.
\newblock {\em Advances in mathematics}, 318:355--410, 2017.

\bibitem[HH24]{HH24}
Wahei Hara and Yuki Hirano.
\newblock Mutations of noncommutative crepant resolutions in geometric invariant theory.
\newblock {\em Selecta Mathematica}, 30(4):70, 2024.

\bibitem[HI22]{HI22}
Norihiro Hanihara and Osamu Iyama.
\newblock Enhanced {A}uslander--{R}eiten duality and {M}orita theorem for singularity categories.
\newblock arXiv:2209.14090, 2022.

\bibitem[HIO14]{HIO}
Martin Herschend, Osamu Iyama, and Steffen Oppermann.
\newblock n--{R}epresentation infinite algebras.
\newblock {\em Advances in mathematics}, 252(2):292--342, 2014.

\bibitem[HN19]{HN}
Akihiro Higashitani and Yusuke Nakajima.
\newblock Conic divisorial ideals of {H}ibi rings and their applications to non-commutative crepant resolutions.
\newblock {\em Selecta Mathematica}, 25(5):78, 2019.

\bibitem[IR08]{IR}
Osamu Iyama and Idun Reiten.
\newblock Fomin--{Z}elevinsky mutation and tilting modules over {C}alabi--{Y}au algebras.
\newblock {\em American Journal of Mathematics}, 130(4):1087--1149, 2008.

\bibitem[IU07]{IU07}
Akira Ishii and Kazushi Ueda.
\newblock On moduli spaces of quiver representations associated with dimer models.
\newblock arXiv:0710.1898, 2007.

\bibitem[IW14]{IW}
Osamu Iyama and Michael Wemyss.
\newblock Maximal modifications and {A}uslander-{R}eiten duality for non-isolated singularities.
\newblock {\em Inventiones mathematicae}, 197(3):521--586, 2014.

\bibitem[Iya07a]{Iya07b}
Osamu Iyama.
\newblock Auslander correspondence.
\newblock {\em Advances in mathematics}, 210(1):51--82, 2007.

\bibitem[Iya07b]{Iya07a}
Osamu Iyama.
\newblock Higher-dimensional {A}uslander-{R}eiten theory on maximal orthogonal subcategories.
\newblock {\em Advances in mathematics}, 210(1):22--50, 2007.

\bibitem[LW12]{LW}
Graham~Joseph Leuschke and Roger Wiegand.
\newblock {\em Cohen-{M}acaulay representations}.
\newblock Number 181 in Graduate Texts in Mathematics. American Mathematical Soc., 2012.

\bibitem[Mat22]{Mat22}
Koji Matsushita.
\newblock Conic divisorial ideals of toric rings and applications to {H}ibi rings and stable set rings.
\newblock 2022.

\bibitem[MS26]{MS26}
Aimeric Malter and Artan Sheshmani.
\newblock Non-commutative crepant resolutions for (almost) simplicial toric algebras.
\newblock arXiv:2602.21802, 2026.

\bibitem[Sta83]{Sta}
Richard~Peter Stanley.
\newblock {\em Combinatorics and Commutative Algebra}, volume~41 of {\em Progress in Math}.
\newblock Birk\u{a}user, 1983.

\bibitem[Tom24]{Tom24}
Ryu Tomonaga.
\newblock Cohen-{M}acaulay representations of invariant subrings.
\newblock arXiv:2403.19282, 2024.

\bibitem[Tom25a]{Tom25b}
Ryu Tomonaga.
\newblock Higher representation infinite algebras and toric {F}ano stacks of {P}icard number one or two.
\newblock arXiv:2511.02641, 2025.

\bibitem[Tom25b]{Tom25a}
Ryu Tomonaga.
\newblock Weak del {P}ezzo surfaces are characterized by the existence of $2$-tilting bundles.
\newblock arXiv:2510.26199, 2025.

\bibitem[VdB04a]{VdB04a}
Michel Van~den Bergh.
\newblock Non-commutative crepant resolutions.
\newblock {\em The Legacy of Niels Henrik Abel: The Abel Bicentennial, Oslo, 2002. Berlin, Heidelberg: Springer Berlin Heidelberg}, pages 749--770, 2004.

\bibitem[VdB04b]{Vdb04b}
Michel Van~den Bergh.
\newblock Three-dimensional flops and noncommutative rings.
\newblock {\em Duke Math. J.}, 122(3):423--455, 2004.

\bibitem[VdB23]{VdB23}
Michel Van~den Bergh.
\newblock {\em Noncommutative crepant resolutions, an overview}.
\newblock International Congress of Mathematicians. European Mathematical Society-EMS-Publishing House GmbH, 2023.

\bibitem[vVdB17]{SVdB17}
\v{S}pela \v{S}penko and Michel Van~den Bergh.
\newblock Non-commutative resolutions of quotient singularities for reductive groups.
\newblock {\em Inventiones mathematicae}, 210:3--67, 2017.

\bibitem[vVdB20a]{SVdB20a}
\v{S}pela \v{S}penko and Michel Van~den Bergh.
\newblock Non-commutative crepant resolutions for some toric singularities {I}.
\newblock {\em International Mathematics Research Notices}, 21:8120--8138, 2020.

\bibitem[vVdB20b]{SVdB20b}
\v{S}pela \v{S}penko and Michel Van~den Bergh.
\newblock Non-commutative crepant resolutions for some toric singularities {II}.
\newblock {\em Journal of Noncommutative Geometry}, 14(1):73--103, 2020.

\bibitem[Wem18]{Wem18}
Michael Wemyss.
\newblock Flops and clusters in the homological minimal model programme.
\newblock {\em Inventiones mathematicae}, 211(2):435--521, 2018.

\bibitem[Yos90]{Yos90}
Yuji Yoshino.
\newblock {\em Cohen-{M}acaulay modules over {C}ohen-{M}acaulay rings}, volume 146.
\newblock Cambridge University Press, 1990.

\end{thebibliography}

\end{document}